\def\RR{{\mathbb R}}
\def\Snn{{\mathcal{S}^{n \times n}}}
\newcounter{marnote}
\begin{document}
\newtheorem{thm}{Theorem}[section]
\newtheorem{Def}[thm]{Definition}
\newtheorem{lem}[thm]{Lemma}
\newtheorem{rem}[thm]{Remark}
\newtheorem{question}[thm]{Question}
\newtheorem{prop}[thm]{Proposition}
\newtheorem{cor}[thm]{Corollary}
\newtheorem{example}[thm]{Example}

\title{Comparison principles and Lipschitz regularity for some
 nonlinear degenerate elliptic equations}

\author{YanYan Li \footnote{School of Mathematical Sciences, Beijing Normal University, Beijing 100875, China and Department of Mathematics, Rutgers University, 110 Frelinghuysen Rd, Piscataway, NJ 08854, USA. Email: yyli@math.rutgers.edu.}~\quad Luc Nguyen \footnote{Mathematical Insitute and St Edmund Hall, University of Oxford, Andrew Wiles Building, Radcliffe Observatory Quarter, Woodstock Road, Oxford OX2 6GG, UK. Email: luc.nguyen@maths.ox.ac.uk.}~\quad Bo Wang \footnote{
School of Mathematics and Statistics, Beijing Institute of Technology, Beijing 100081, China. Email: wangbo89630@bit.edu.cn.}}

\date{}

\maketitle

\begin{abstract}
We establish interior Lipschitz regularity for continuous viscosity solutions of fully nonlinear, conformally invariant, degenerate elliptic equations. As a by-product of our method, we also prove a weak form of the strong comparison principle, which we refer to as the principle of propagation of touching points, for operators of the form $\nabla^2 \psi + L(x,\psi,\nabla \psi)$ which are non-decreasing in $\psi$. 

Key words: Lipschitz regularity; comparison principle; propagation of touching points; degenerate elliptic; conformal invariance.

MSC2010: 35J60 35J70 35B51 35B65 35D40 53C21 58J70

\end{abstract}

\setcounter{section}{0}

\section{Introduction}

The main goal of this paper is to prove interior Lipschitz regularity for continuous viscosity solutions of fully nonlinear, conformally invariant, degenerate elliptic equations arising from conformal geometry.

Let $\mathbb{R}^{n}$ denote the Euclidean space of dimension $n$,
\begin{equation}
\Gamma\subset \mathbb{R}^{n}\mbox{ be an open convex symmetric cone with vertex at the origin},\label{cone1}
\end{equation}
satisfying
\begin{equation}
\Gamma \supset \{\lambda\in\mathbb{R}^{n}:\lambda_{i}>0,i=1,\cdots,n\}.
\label{wb}
\end{equation}
Let $f\in C^{1}(\Gamma)\cap C^{0}(\overline{\Gamma})$ be a symmetric function satisfying 
\begin{equation}
 f = 0 \text{ on } \partial\Gamma \text{ and } f>0,\quad \frac{\partial f}{\partial \lambda_{i}}>0 \mbox{ in }\Gamma \text{ for } i = 1, \ldots, n.\label{fGonzalez06}
\end{equation}
In the above, the symmetricity of $(f,\Gamma)$ is understood in the sense that if $\lambda \in \Gamma$, then $\tilde \lambda \in \Gamma$ and $f(\tilde\lambda) = f(\lambda)$ for any permutation $\tilde\lambda$ of $\lambda$.

For a function $u$ defined on a Euclidean domain, let $A^u$ denote its conformal Hessian matrix, i.e.
\[
A^u:=-\frac{2}{n-2}u^{-\frac{n+2}{n-2}}\nabla^2u+\frac{2n}{(n-2)^{2}}u^{-\frac{2n}{n-2}} \nabla u \otimes \nabla u -\frac{2}{(n-2)^{2}}u^{-\frac{2n}{n-2}}|\nabla u|^{2}I,
\]
where here and below $I$ denotes the $n\times n$ identity matrix, and, for $p, q \in \RR^n$, $p \otimes q$ denotes the $n\times n$ matrix with entries $(p \otimes q)_{ij} = p_i\,q_j$. Let $\lambda(A^u)$ denote the eigenvalues of $A^u$.

In recent years, there has been a growing literature on the following two equations:
\begin{equation}
f\left(\lambda(A^{u})\right)=1,\quad u>0 \mbox{ and } \quad \lambda(A^u) \in \Gamma,\label{yamabe'}
\end{equation}
and
\begin{equation}
\lambda(A^{u})\in\partial\Gamma,\quad \mbox{ and } \quad u>0.\label{main equation}
\end{equation}
Note that equation \eqref{main equation} is equivalent to 
\begin{equation*}
f\left(\lambda(A^{u})\right)=0,\quad  u>0\quad\mbox{and} \quad\lambda(A^{u})\in\overline{\Gamma}.
\end{equation*}
Equation \eqref{yamabe'} and \eqref{main equation} are second order fully nonlinear elliptic and degenerate elliptic equations, respectively. Fully nonlinear elliptic equations involving $f(\lambda(\nabla^{2}u))$ was investigated in the classic paper \cite{C-N-S-Acta}.

The equations \eqref{yamabe'} and \eqref{main equation} arose from conformal geometry. On a Riemannian manifold $(M,g)$ of dimension $n\geq3$, consider the Schouten tensor 
\begin{equation*}
A_{g}=\frac{1}{n-2}(\mbox{Ric}_{g}-\frac{1}{2(n-1)}R_{g}g),
\end{equation*}
where $\mbox{Ric}_{g}$ and $R_{g}$ denote, respectively, the Ricci tensor and the scalar curvature. Let $\lambda(A_{g})=(\lambda_{1},\cdots,\lambda_{n})$ denote the eigenvalues of $A_{g}$ with respect to $g$. It is well known that, in a conformal change of the metric, the ``main contribution'' to the curvature tensor is captured in the change of the Schouten tensor.  One is thus naturally led to study, in the hope of finding some sort of ``best metric'' in a conformal class of metrics, the problem (see e.g. \cite{CGY02-AnnM,Viac00-Duke})
\begin{equation}
f\left(\lambda(A_{u^{\frac{4}{n-2}}g})\right)=1,\quad u>0,\quad\mbox{and}\quad \lambda(A_{u^{\frac{4}{n-2}}g})\in\Gamma \mbox{ on }M.\label{yamabe}
\end{equation}
This problem is sometimes referred to in the literature as a fully nonlinear version of the Yamabe problem. When $M$ is a Euclidean domain and $g=g_{\rm flat}$ is the flat metric, equation \eqref{yamabe} is exactly equation \eqref{yamabe'}. Furthermore, both equation \eqref{yamabe'} and equation \eqref{main equation} appear naturally in the study of blow-up sequences of solutions of \eqref{yamabe} on manifolds.

Important examples of $(f,\Gamma)$ are $(f,\Gamma)=(\sigma_{k}^{\frac{1}{k}},\Gamma_{k})$, $1 \leq k \leq n$, where $\sigma_{k}(\lambda):=\sum\limits_{1\leq i_{1}<\cdots<i_{k}\leq n}\lambda_{i_{1}}\cdots\lambda_{i_{k}}$ is the $k$-th elementary symmetric function, and $\Gamma_{k}$ is the connected component of $\{\lambda\in\mathbb{R}^{n}:\sigma_{k}(\lambda)>0\}$ containing the positive cone $\{\lambda\in\mathbb{R}^{n}:\lambda_{i}>0,i=1,\cdots,n\}$. When $(f,\Gamma)=(\sigma_{1},\Gamma_{1})$, \eqref{yamabe} is the classical Yamabe problem in the so-called positive case.

In this paper, we establish the following regularity result for continuous viscosity solutions of \eqref{main equation}. See \cite[Definition 1.1]{Li09-CPAM} and Definition \ref{Def:ViscositySolution} below for the definition of viscosity solutions.
\begin{thm}[Local Lipschitz regularity] For $n\geq 3$, let $\Omega$ be an open subset of $\mathbb{R}^{n}$, and $\Gamma$ satisfy \eqref{cone1} and \eqref{wb}. Assume that $u$ is a continuous viscosity solution of \eqref{main equation} in $\Omega$. Then $u\in C^{0,1}_{loc}(\Omega)$.
\label{thm:regularity}
\end{thm}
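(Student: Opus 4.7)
The plan is to establish the Lipschitz bound by a comparison argument with smooth conformal bubble barriers, in tandem with the propagation-of-touching-points principle advertised in the introduction. Consider the radial bubbles
\[
\varphi_{a,z,c}(x) = c\bigl(a^{2} + |x-z|^{2}\bigr)^{-(n-2)/2}, \qquad a, c > 0,\ z \in \mathbb{R}^{n}.
\]
A direct computation gives $A^{\varphi_{a,z,c}} = 2 a^{2} c^{-4/(n-2)} I$, so $\lambda(A^{\varphi_{a,z,c}}) = 2 a^{2} c^{-4/(n-2)}(1, \ldots, 1)$ lies strictly in $\Gamma^{\circ}$ (using the hypothesis that $\Gamma$ contains the positive cone), making $\varphi_{a,z,c}$ a smooth strict subsolution of $\lambda(A^{u}) \in \partial\Gamma$.

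The first step is a comparison principle: if $\varphi \leq u$ on $\partial B$ then $\varphi \leq u$ in $B$, whenever $\varphi$ is such a smooth strict subsolution. This combines the viscosity supersolution property of $u$ (equivalently $\lambda(A^{u}) \notin \Gamma^{\circ}$ in the viscosity sense) with the conformal scaling $A^{t\varphi} = t^{-4/(n-2)} A^{\varphi}$: setting $t^{*} := \sup\{t > 0 : t\varphi \leq u \text{ on } \overline{B}\}$, if $t^{*} < 1$ then $t^{*}\varphi$ would touch $u$ from below at an interior point, forcing $\lambda(A^{t^{*}\varphi}) \notin \Gamma^{\circ}$ there, contradicting $\lambda(A^{t^{*}\varphi}) = (t^{*})^{-4/(n-2)}\lambda(A^{\varphi}) \in \Gamma^{\circ}$.

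To extract Lipschitz regularity, fix a compact $K \subset \Omega$; by continuity and positivity of $u$, pick $r_{0} > 0$ and $0 < m \leq M$ with $m \leq u \leq M$ on $\bigcup_{x_{0} \in K} \overline{B_{r_{0}}(x_{0})} \subset \Omega$. For each $x_{0} \in K$ and each unit vector $\nu \in S^{n-1}$, choose bubble parameters with center $z = x_{0} + R\nu$ so that $\varphi_{a,z,c}(x_{0}) = u(x_{0})$ and $\varphi_{a,z,c} \leq m$ on $\partial B_{r_{0}}(x_{0})$, with $\nabla \varphi_{a,z,c}(x_{0}) = L\nu$ for some $L = L(m, M, r_{0})$ independent of $x_{0}$ and $\nu$. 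The comparison above then forces $\varphi_{a,z,c} \leq u$ in $B_{r_{0}}(x_{0})$, yielding $u(x_{0} - t\nu) \geq u(x_{0}) - L t + O(t^{2})$ for small $t > 0$. Varying $\nu$ over $S^{n-1}$ gives the one-sided estimate $u(y) \geq u(x_{0}) - L|y-x_{0}| + O(|y-x_{0}|^{2})$ for $y \in B_{r_{0}}(x_{0})$, and exchanging the roles of $x_{0}$ and $y$ (uniformly over $K$) delivers the matching upper bound, hence $u \in C^{0,1}_{\mathrm{loc}}(\Omega)$.

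The main obstacle is the simultaneous realization of the three constraints on the bubble --- matching at $x_{0}$, boundary inequality on $\partial B_{r_{0}}(x_{0})$, and a uniform gradient bound --- for every direction $\nu$ and every $x_{0} \in K$. A bubble centered far from $x_{0}$ is nearly constant on $B_{r_{0}}(x_{0})$ but overshoots $u$ on the portion of the boundary facing its center; one centered close by tilts sharply, giving too large a gradient. Balancing these effects forces a specific parameter regime, with $R$ comparable to $a$ and both of order $r_{0}$ times a positive function of $m/M$. Beyond this parameter selection, the rigorous sliding argument for merely continuous $u$ is packaged by the propagation-of-touching-points principle proved elsewhere in the paper, which makes the viscosity touching analysis applicable in exactly the form we require.
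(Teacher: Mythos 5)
There is a genuine gap, and it is fatal: you have the orientation of the degenerate ellipticity, and hence of the comparison, backwards. In the $u$-variable the second-order part of $A^u$ is $-\frac{2}{n-2}u^{-\frac{n+2}{n-2}}\nabla^2 u$, so the operator is monotone \emph{decreasing} in $\nabla^2 u$; equivalently, the substitution $\psi=-\frac{2}{n-2}\ln u$, which brings the equation to the form $F[\psi]=\nabla^2\psi+\dots$, is decreasing in $u$. Consequently the viscosity supersolution property of $u$ (that $\lambda(A^{\phi})\in\RR^n\setminus\Gamma$ for test functions $\phi$) constrains $C^2$ functions touching $u$ from \emph{above}, not from below; touching from below only activates the subsolution property, which yields $\lambda(A^{t^*\varphi})\in\overline{\Gamma}$ and is perfectly consistent with $\lambda(A^{t^*\varphi})\in\Gamma$. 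So in your sliding argument no contradiction arises when $t^*<1$, and indeed your comparison step ``$\varphi\le u$ on $\partial B$ implies $\varphi\le u$ in $B$ for a strict bubble subsolution $\varphi$'' is false: take any $\Gamma_k$, $u\equiv 1$ (a solution, since $\lambda(A^u)=0\in\partial\Gamma_k$), $x_0=0$, and the bubble with $a=R=r_0/4$, $z=R\nu$, $c=(2R^2)^{(n-2)/2}$; then $\varphi(x_0)=1=u(x_0)$ and $\varphi\le 5^{-(n-2)/2}<1$ on $\partial B(x_0,r_0)$, yet $\varphi(z)=2^{(n-2)/2}>1=u(z)$. With the correct orientation, functions with $\lambda(A^\varphi)\in\Gamma$ are admissible only as barriers \emph{above} $u$ (``supersolution $\le$ subsolution'' in the $u$-variable reads $u\le\varphi$), so your lower bubble barriers, the one-sided gradient bound, and the final Lipschitz estimate all collapse. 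The mirrored construction does not rescue the argument either: a bubble is radially decreasing about its center, so it cannot dominate $u$ on all of $\partial B(x_0,r_0)$ while equalling $u(x_0)$ at the center, since part of that sphere lies farther from the bubble's center than $x_0$ does.

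For contrast, the paper's proof does not use fixed smooth barriers pinned at each point at all. It uses the Kelvin transforms $u_{x,\lambda}$ of $u$ itself, which solve \eqref{main equation} on the annulus $B(0,\frac34)\setminus\overline{B(x,\lambda)}$ by conformal invariance; the boundary inequalities $u_{x,\lambda}\le u$ on $\partial B(0,\frac34)$ (from the choice of $R$) and $u_{x,\lambda}=u$ on $\partial B(x,\lambda)$ allow an application of the comparison principle Theorem \ref{thm:CPQuad}(b) --- needed in full strength because both functions are merely continuous --- to get the moving-spheres inequality $u_{x,\lambda}\le u$ for all $x\in\overline{B(0,\frac12)}$ and $0<\lambda\le R$; Lipschitz continuity is then extracted from this family of inequalities via \cite[Lemma 2]{LiNg-arxiv}. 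If you want to repair your approach, you would need comparison functions that are genuine supersolutions (in the $u$-variable) lying above $u$ near the touching point, or else follow the conformal-invariance route as in the paper.
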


\begin{rem}
As a consequence of Theorem \ref{thm:regularity},
several previously known results for  Lipschitz continuous solutions 
of \eqref{main equation}  hold for continuous solutions.
This includes
the Liouville-type Theorem 1.4, the symmetry results
Theorem 1.18 and Theorem 1.23 in \cite{Li09-CPAM};  the B\^ocher-type Theorems 1.2 and 1.3,
the Harnack-type Theorem 1.5, and
the asymptotic behavior results
 Corollary 1.7 and  Theorem 1.8 in \cite{LiNgBocher}.
\end{rem}

Although there have been many works on a priori estimates for solutions to \eqref{yamabe'} and \eqref{main equation} and closely related issues (see e.g. \cite{CGY02-AnnM, Chen05,GeWang06, Gonzalez05, GW03-IMRN,GV07,HLT10, J,J-L-L, LiLi03,LiLi05,Li09-CPAM,LiNgBocher,LiNgSymmetry,NadirashviliVladuts, STW07, TW09, Viac02-CAG, Wang06}), our theorem above appears to be the first regularity result for viscosity solutions in this context.

The regularity obtained in Theorem \ref{thm:regularity} is in a sense sharp: In \cite{NadirashviliVladuts}, Nadirashvili and Vl\u{a}du\c{t} showed that, for any $\epsilon \in (0,1)$, there exists a solution to a uniformly elliptic and conformally invariant equation in a ball $B \subset \RR^5$ which belongs to $C^{1,\epsilon}(B) \setminus C^{1,\epsilon+}(B)$.

It is sometimes more convenient to write $u = e^{-\frac{n-2}{2}\psi}$ or $u = w^{-\frac{n-2}{2}}$. An easy computation gives $A^u = A_w = e^{2\psi} A[\psi]$ where
\begin{align*}
A_{w}
	&=w\nabla^{2}w-\frac{1}{2}|\nabla w|^{2}I,\\
A[\psi] 
	&=  \nabla^2 \psi 
	+  \nabla\psi \otimes \nabla\psi
	- \frac{1}{2} |\nabla \psi|^{2}I.
\end{align*}

In addition to Theorem \ref{thm:regularity}, we also study the Dirichlet boundary value problem for a class of degenerate elliptic equations which includes the conformal operator $A[\psi]$. Consider operators of the form
\begin{equation}
F[\psi] = \nabla^2 \psi + \alpha\,\nabla \psi \otimes \nabla \psi - \beta |\nabla \psi|^2\,I
	\label{Eq:FConstab}
\end{equation}
where $\alpha$ and $\beta$ are constant, and the equation 
\[
F[\psi] \in \partial U,
\]
where $U$ is a non-empty open subset of $\Snn$
satisfying a degenerate ellipticity condition:
\begin{equation}
\text{ if }A\in U, B\in\Snn\mbox{ and }B>0, \text{ then } A+B\in U.
	\label{Eq:UCondPos}
\end{equation}
(Note that \eqref{Eq:UCondPos} implies that $\partial U$ is Lipschitz.)

In the context of Theorem \ref{thm:regularity}, $U$ is the set of symmetric matrices whose eigenvalues belong to $\Gamma$, as equation \eqref{main equation} can be written equivalently as $A^u \in \partial U$. We note for future use that our results below apply also to the setting of fully nonlinear Yamabe problem of ``negative type'' by considering the set $U$ of symmetric matrices whose eigenvalues belong to $\RR^n \setminus (-\bar\Gamma)$, where
\[
-\bar\Gamma = \{\lambda \in \RR^n: -\lambda \in \bar\Gamma\}.
\]
In both cases, \eqref{Eq:UCondPos} holds thanks to \eqref{cone1} and \eqref{wb}.

For any set $S \subset\mathbb{R}^{n}$, we use $\mbox{USC}(S)$ to denote the set of functions $\psi:S\rightarrow\mathbb{R}\cup\{-\infty\}$, $\psi \not\equiv -\infty$ in $S$, satisfying 
\begin{equation*}
\limsup\limits_{x\rightarrow\bar{x}}\psi(x)\leq \psi(\bar{x}),\quad \forall \bar{x}\in S.
\end{equation*}
Similarly, we use $\mbox{LSC}(S)$ to denote the set of functions $\psi: S\rightarrow\mathbb{R}\cup\{+\infty\}$, $\psi \not\equiv +\infty$  in $S$, satisfying 
\begin{equation*}
\liminf\limits_{x\rightarrow\bar{x}}\psi(x)\geq \psi(\bar{x}),\quad \forall \bar{x}\in S.
\end{equation*}

We now give the definition of viscosity subsolutions, supersolutions and solutions to the degenerate elliptic equation $F[\psi] \in \partial U$.

\begin{Def}\label{Def:ViscositySolution}
Let $\Omega\subset\mathbb{R}^{n}$, $n \geq 2$, be an open set, and $U$ be a non-empty open 
subset of $\Snn$ satisfying \eqref{Eq:UCondPos}. For a function  $\psi$ in $USC(\Omega)$ ($LSC(\Omega)$), we say that 
\begin{equation*}
F[\psi] \in \overline{U}\quad \left(F[\psi]\in\Snn\setminus U\right) \quad\mbox{in }\Omega \quad\mbox{in the viscosity sense}
\end{equation*}
if for any $x_{0}\in\Omega$, $\varphi\in C^{2}(\Omega)$, $(\psi-\varphi)(x_{0})=0$ and 
\begin{equation*}
\psi-\varphi\leq0\quad(\psi-\varphi\geq0),\quad\mbox{near }x_{0},
\end{equation*}
there holds
\begin{equation*}
F[\varphi](x_{0})\in \overline{U}\quad \left(F[\varphi](x_{0})\in\Snn\setminus U\right).
\end{equation*}

We say that a function $\psi \in C^0(\Omega)$ satisfies 
\begin{equation}
F[\psi]\in \partial U \text{ in the viscosity sense}
	\label{Eq:FpsiEq}
\end{equation}
in $\Omega$ if $F[\psi]$ belongs to both $\overline{U}$ and $\Snn\setminus U$ in $\Omega$ in the viscosity sense.
 
 When $F[\psi] \in \overline{U}\quad \left(F[\psi]\in\Snn\setminus U\right)$ in $\Omega$ in the viscosity sense, we also say interchangeably that $\psi$ is a viscosity subsolution (supersolution) to \eqref{Eq:FpsiEq} in $\Omega$.
\end{Def}

Our next result is a uniqueness statement for \eqref{Eq:FpsiEq} when $U$ satisfies
\begin{equation}
A \in U \text{ and } c > 0 \Rightarrow cA \in U.
	\label{Eq:UCone}
\end{equation}

\begin{thm}[Uniqueness for the Dirichlet Problem]
Let $\Omega\subset\mathbb{R}^{n}$ ($n \geq 2$) be a non-empty bounded open set, and $U$ be a non-empty open subset of $\Snn$ satisfying \eqref{Eq:UCondPos} and \eqref{Eq:UCone}. Assume that $F$ is of the form \eqref{Eq:FConstab}. Then, for any $\varphi \in C^0(\partial\Omega)$, there exists at most one solution $\psi \in C^{0}(\bar\Omega)$ of \eqref{Eq:FpsiEq} satisfying $\psi = \varphi$ on $\partial\Omega$.
\label{thm:Uniq}
\end{thm}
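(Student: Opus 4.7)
The plan is to derive uniqueness from the principle of propagation of touching points alluded to in the abstract (established elsewhere in the paper, for operators of the form $\nabla^2 \psi + L(x,\psi,\nabla\psi)$ that are non-decreasing in $\psi$), together with the trivial translation invariance of $F$ in \eqref{Eq:FConstab}. Here $F[\psi]$ is of the stated form with $L(p) = \alpha\,p\otimes p - \beta |p|^2\,I$, which is independent of $\psi$, so the hypothesis of that principle is automatically met.

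The first observation is that, because $F$ in \eqref{Eq:FConstab} depends only on $\nabla\psi$ and $\nabla^2\psi$, one has $F[\psi + c] = F[\psi]$ for every constant $c \in \RR$. In particular, if $\psi$ is a viscosity solution of \eqref{Eq:FpsiEq} in $\Omega$, so is $\psi + c$. (The cone hypothesis \eqref{Eq:UCone} is not used for this step; I expect it to come in only through the propagation principle.) Now suppose, for contradiction, that $\psi_1, \psi_2 \in C^0(\bar\Omega)$ are two viscosity solutions with the same boundary datum $\varphi$ but $\psi_1 \not\equiv \psi_2$ on $\bar\Omega$. After swapping indices if necessary, I may assume $c^* := \max_{\bar\Omega}(\psi_1 - \psi_2) > 0$. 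Set $\tilde\psi_2 := \psi_2 + c^*$, which remains a viscosity solution by translation invariance. Then $\psi_1 \le \tilde\psi_2$ on $\bar\Omega$ with equality at some point $x_0$, while on $\partial\Omega$ one has $\psi_1 - \tilde\psi_2 \equiv -c^* < 0$. Hence $x_0$ lies in the interior of $\Omega$.

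The next step is to apply the principle of propagation of touching points to the viscosity subsolution $\psi_1$ (so $F[\psi_1] \in \overline U$) and the viscosity supersolution $\tilde\psi_2$ (so $F[\tilde\psi_2] \in \Snn \setminus U$) of \eqref{Eq:FpsiEq} at the interior touching point $x_0$, where $\psi_1 \le \tilde\psi_2$. The conclusion is that the contact set $T := \{x \in \Omega : \psi_1(x) = \tilde\psi_2(x)\}$ is relatively open in $\Omega$; being also relatively closed by continuity of $\psi_1 - \tilde\psi_2$, it contains the whole connected component $\Omega_0 \subset \Omega$ of $x_0$. Because $\Omega$ is bounded, $\overline{\Omega_0}$ meets $\partial\Omega$, and continuity propagates the identity $\psi_1 = \tilde\psi_2$ to such a boundary point, yielding $0 = \psi_1 - \psi_2 = c^* > 0$, a contradiction.

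The main obstacle is, of course, the propagation of touching points itself -- that is the substantive ingredient carried out elsewhere in the paper. Granted it, the uniqueness statement reduces to the few lines above via translation invariance and a topological connectedness argument. A subtlety to verify is that the precise form of the propagation principle proved in the paper is in fact applicable to a pair of sub/super-solutions of the level-set equation $F[\psi] \in \partial U$; it is here that I expect the cone hypothesis \eqref{Eq:UCone}, combined with \eqref{Eq:UCondPos}, to enter, ensuring that the homogeneity and one-sided ellipticity of the level set $\partial U$ are compatible with the framework in which propagation of touching points is established.
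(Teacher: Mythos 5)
Your proposal is correct and ultimately rests on the same ingredient as the paper: the paper's proof is a one-line appeal to the comparison principle of Theorem \ref{thm:CPQuad}(b) applied to the two solutions in both orders, while your argument---adding the constant $c^*=\max_{\bar\Omega}(\psi_1-\psi_2)$ to one solution and invoking the propagation of touching points of Theorem \ref{thm:CPQuad}(a)---is precisely the standard reduction of part (b) to part (a) for $s$-independent $L$ that the paper records in Remark \ref{rem:SCP=>CP}. One small inaccuracy: Theorem \ref{thm:CPQuad}(a) does not say that the contact set is relatively open; under your hypotheses it asserts $\tilde\psi_2>\psi_1$ throughout $\Omega$, i.e.\ the contact set is empty, which contradicts $x_0\in\Omega$ directly and makes the connected-component detour unnecessary.
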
 

We also prove the following existence theorem using Perron's method (see \cite{Ishii89-CPAM}). 

\begin{thm}[Existence by sub- and supersolution method]\label{thm:Perron}
Let $\Omega$ and $(F,U)$ be as in Theorem \ref{thm:Uniq}. Let
$w\in LSC(\overline \Omega)$ and $v\in USC(\overline\Omega )$ be respectively supersolution and subsolution of \eqref{Eq:FpsiEq} in $\Omega$ such that $w\geq v$ in $\Omega$ and $w=v$ on $\partial\Omega$. Then there exists a viscosity solution $u\in C^0(\overline \Omega)$ of \eqref{Eq:FpsiEq} in $\Omega$ satisfying
\begin{align*}
v\le u\le w &\qquad\mbox{in}\ \overline\Omega,\\
u=w=v &\qquad \mbox{on}\ \partial\Omega.
\end{align*}
\end{thm}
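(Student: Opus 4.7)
The plan is to follow Ishii's Perron-method framework (see \cite{Ishii89-CPAM}), with the comparison principle underlying Theorem~\ref{thm:Uniq} supplying the uniqueness input that upgrades semicontinuous envelopes to a continuous solution. Define the Perron candidate
\[
u(x) := \sup\{\phi(x) : \phi \in USC(\overline\Omega),\ v \leq \phi \leq w \text{ on } \overline\Omega,\ F[\phi] \in \overline U \text{ in } \Omega \text{ in the viscosity sense}\}.
\]
Since $v$ itself is admissible, $u$ is well-defined and $v \leq u \leq w$. Write $u^*$ and $u_*$ for the upper and lower semicontinuous envelopes of $u$ on $\overline\Omega$; the hypothesis $v=w$ on $\partial\Omega$ then forces $u_* = u^* = v = w$ there.

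The two technical cores are that $u^*$ is a subsolution and $u_*$ is a supersolution. The first is the familiar stability fact that the USC envelope of a supremum of viscosity subsolutions of a degenerate elliptic equation is itself a subsolution: if $\varphi \in C^2$ touches $u^*$ from above at $x_0 \in \Omega$, one extracts admissible $\phi_k$ and points $x_k \to x_0$ with $\phi_k(x_k) \to u^*(x_0)$, shifts $\varphi$ so that it touches each $\phi_k$ from above near some $y_k \to x_0$, and passes to the limit using continuity of $F$ and closedness of $\overline U$. The second is the classical bump construction: if $\varphi \in C^2$ touches $u_*$ from below at $x_0 \in \Omega$ with $F[\varphi](x_0) \in U$, then since $U$ is open and $F$ is continuous, the perturbation $\tilde\varphi(x) := \varphi(x) + \delta - \gamma |x-x_0|^2$ satisfies $F[\tilde\varphi] \in U$ on some $B_r(x_0)$ and lies strictly below $u$ near $\partial B_r(x_0)$. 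Provided $u_*(x_0) < w(x_0)$, one may also arrange $\tilde\varphi(x_0) > u(x_0)$ and $\tilde\varphi \leq w$ on $B_r(x_0)$; then $\max(u,\tilde\varphi)$ on $B_r(x_0)$, glued to $u$ outside, is an admissible subsolution strictly exceeding $u$ at some point, contradicting the definition of $u$. The case $u_*(x_0) = w(x_0)$ is ruled out by testing the supersolution $w$ against $\varphi$.

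With $u^*$ a subsolution, $u_*$ a supersolution, and $u^* = u_* = v = w$ on $\partial\Omega$, applying the comparison principle underlying Theorem~\ref{thm:Uniq} to the pair $(u_*, u^*)$ gives $u^* \leq u_*$ on $\overline\Omega$. Combined with the trivial $u_* \leq u \leq u^*$, this yields $u = u^* = u_* \in C^0(\overline\Omega)$, so $u$ is the desired continuous viscosity solution of \eqref{Eq:FpsiEq} with $v \leq u \leq w$ in $\overline\Omega$ and $u = v = w$ on $\partial\Omega$.

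The main obstacle is the final step: Theorem~\ref{thm:Uniq} is stated for uniqueness \emph{between continuous solutions}, whereas Perron's method genuinely requires comparison \emph{between a semicontinuous subsolution and a semicontinuous supersolution} sharing continuous boundary data. One therefore needs to verify that the argument of Theorem~\ref{thm:Uniq}---which, for operators of the form \eqref{Eq:FConstab} under \eqref{Eq:UCondPos}--\eqref{Eq:UCone}, is expected to proceed by the standard doubling-of-variables technique of the viscosity literature---actually delivers such a semicontinuous comparison principle. This is the single place where the precise form of $F$ and the scaling invariance \eqref{Eq:UCone} are genuinely exploited; the rest of the argument is essentially formal.
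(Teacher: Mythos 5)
Your proposal follows essentially the same route as the paper: Perron's method plus a comparison principle, the only structural difference being that the paper works with the \emph{infimum of supersolutions} squeezed between $v$ and $w$ (its Lemma \ref{lemC5-1new} shows that the lower semicontinuous envelope of such an infimum is again a supersolution, and Ishii's bump argument then shows the upper envelope is a subsolution), which is the mirror image of your supremum of subsolutions; both variants are equally workable. The point you single out as the ``main obstacle'' is, however, not an obstacle at all: the semicontinuous comparison principle you need is exactly Theorem \ref{thm:CPQuad}(b), which is stated for supersolutions $w\in LSC(\bar\Omega)$ and subsolutions $v\in USC(\bar\Omega)$ with constant $\alpha,\beta$, and is proved in Section \ref{sec:CP} (not by doubling of variables, but by sup/inf-convolutions combined with the first-variation perturbation of Lemma \ref{Lem:FVSub}); Theorem \ref{thm:Uniq} is itself a one-line corollary of it. Accordingly, the paper's proof of Theorem \ref{thm:Perron} simply invokes Theorem \ref{thm:CPQuad}(b) twice: once to show that the lower envelope of the Perron function dominates $v$ (hence lies in the admissible family, giving $u=u_*$), and once at the end to conclude $u^*\le u_*$. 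You should therefore cite Theorem \ref{thm:CPQuad}(b) directly rather than propose re-deriving a semicontinuous version of Theorem \ref{thm:Uniq}.

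Two smaller points. First, since $v$ may take the value $-\infty$ and $w$ the value $+\infty$ inside $\Omega$, one should first replace $v$ by $\max(v,c)$ and $w$ by $\min(w,c')$ for suitable constants (constants solve $F[c]=0\in\partial U$, and the maximum of subsolutions, respectively the minimum of supersolutions, is again a subsolution, respectively supersolution), as the paper does, so that the Perron family and its envelopes are finite; your sketch omits this. Second, your claim that $v=w$ on $\partial\Omega$ ``forces'' $u_*=u^*=v=w$ on $\partial\Omega$ is asserted without argument; the paper states the analogous identity \eqref{C9-2new} just as briefly, so this is not a deviation from its proof, but it is precisely the step where the boundary hypotheses on $v$ and $w$ and their respective semicontinuities are consumed (note that $u\le w$ with $w$ merely lower semicontinuous does not by itself pinch $u^*$ from above at $\partial\Omega$, nor does $u\ge v$ with $v$ merely upper semicontinuous pinch $u_*$ from below), so in a complete write-up this deserves explicit justification.
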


One main ingredient of the proofs of Theorems \ref{thm:regularity}, \ref{thm:Uniq} and \ref{thm:Perron} is a comparison principle. In recent years, comparison principles (for viscosity solutions) have been very successfully applied to derive estimates and symmetry properties for solutions to (both degenerate and non-degenerate elliptic) equations in conformal geometry; see \cite{Li09-CPAM} and the references therein. Our paper can be viewed as a continuation in this line of work.

In fact, we will establish a variant of the comparison principle for more general operators of the form
\begin{equation}
F[\psi] = \nabla^2 \psi + \alpha(\cdot, \psi)\,\nabla \psi \otimes \nabla \psi - \beta(\cdot, \psi) |\nabla \psi|^2\,I
	\label{Eq:FIntroDef}
\end{equation}
where $\alpha, \beta: \Omega \times \RR \rightarrow \RR$ and $\Omega$ is an open subset of $\RR^n$. Throughout the paper, we will assume that
\begin{equation}
\text{the function $L(x,s,p) := \alpha(x,s) p \otimes p - \beta(x,s)|p|^2 I$ is non-decreasing in $s$.}
	\label{Eq:LMonotone}
\end{equation}
Note that this condition is consistent with both $A[\psi]$ and $\frac{1}{w}A_w$ defined above.

In the sequel, we say that \emph{the principle of propagation of touching points} holds for $(F,U)$ if for any supersolution $w \in LSC(\bar\Omega)$ and subsolution $v\in USC(\bar\Omega)$ of \eqref{Eq:FpsiEq} in $\Omega$ one has
\[
w\geq v \mbox{ in } \Omega \text{ and }  w > v \text{ on } \partial\Omega \qquad\Rightarrow \qquad w > v \text{ in } \Omega.
\]
(In other words, if $w \geq v$ in $\Omega$ then every non-empty connected component of the set $\{x \in \bar\Omega: w(x) = v(x)\}$ contains a point of $\partial\Omega$.) This principle can be viewed as a weak version of the strong comparison principle.

We say that \emph{the comparison principle} holds for $(F,U)$ if for any supersolution $w \in LSC(\bar\Omega)$ and subsolution $v\in USC(\bar\Omega)$ of \eqref{Eq:FpsiEq} in $\Omega$ one has
\[
w \geq v \text{ on } \partial\Omega \qquad \Rightarrow \qquad w \geq v \text{ in }  \Omega.
\]
It should be noted that, for general degenerate elliptic equations, $w \geq v$ in $\Omega$ does not imply the dichotomy that $w > v$ or $w \equiv v$ in $\Omega$. (This is in contrast with the uniformly elliptic case.)

\begin{rem}\label{rem:SCP=>CP}
If $L(x,s,p)$ is independent of $s$, then the principle of propagation of touching points is equivalent to the comparison principle. 
\end{rem}

We prove that the principle of propagation of touching points holds when $(F,U)$ satisfies, in addition to \eqref{Eq:UCondPos}, \eqref{Eq:UCone} and \eqref{Eq:LMonotone}, the following structural conditions:
\begin{equation}
\begin{array}{ll}
\text{ either }& \text{$|\beta(x, s)| > \beta_0 > 0$ for some constant $\beta_0$}\\
\text{ or } & \text{ both $\alpha$ and $\beta$ are constant}.
\end{array}
	\label{Eq:betaStruct}
\end{equation}

\begin{thm}[Principle of propagation of touching points]\label{thm:CPQuad}
Let $F$ be of the form \eqref{Eq:FIntroDef} where $\alpha, \beta \in C^{0,1}_{loc}(\bar\Omega \times \RR)$ satisfy \eqref{Eq:LMonotone} and \eqref{Eq:betaStruct}. Let $\Omega\subset\mathbb{R}^{n}$ ($n \geq 2$) be a non-empty bounded open set, and $U$ be a non-empty open subset of $\Snn$ satisfying \eqref{Eq:UCondPos} and \eqref{Eq:UCone}. Assume that $w \in LSC(\bar\Omega)$ and $v\in USC(\bar\Omega)$ are respectively a supersolution and a subsolution of \eqref{Eq:FpsiEq} in $\Omega$.

\begin{enumerate}[(a)]
\item If $w \geq v$ in $\Omega$ and $w > v$ on $\partial\Omega$, then $w > v$ in $\Omega$.

\item In case $\alpha$ and $\beta$ are constant, if $w \geq v$ on $\partial\Omega$, then $w \geq v$ in $\Omega$.
\end{enumerate}
\end{thm}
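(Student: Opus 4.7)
My plan for part (a) is to argue by contradiction using a barrier construction that exploits the structural hypothesis \eqref{Eq:betaStruct}. Suppose the coincidence set $E:=\{x\in\bar\Omega:w(x)=v(x)\}$ has a connected component $E_0$ with $\bar E_0\subset\Omega$. Since $E$ is closed in $\bar\Omega$ and $w-v$ is lower semicontinuous, I can enclose $E_0$ in an open set $\mathcal V\Subset\Omega$ with $\bar{\mathcal V}\cap E=\bar{\mathcal V}\cap E_0$, yielding $\delta>0$ with $w-v\geq\delta$ on $\partial\mathcal V$. The aim is to build a $C^2$ perturbation $\eta$ on $\bar{\mathcal V}$ with $\eta|_{\partial\mathcal V}=0$, $\|\eta\|_\infty<\delta$, and $\eta(x^*)>0$ at some $x^*\in E_0$, such that $v+\eta$ is again a viscosity subsolution of $F[\cdot]\in\bar U$ in $\mathcal V$. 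Coupled with a comparison step yielding $v+\eta\leq w$ in $\mathcal V$ (with boundary inequality coming from $\|\eta\|_\infty<\delta\leq w-v$ on $\partial\mathcal V$), this will give the contradiction $w(x^*)\geq v(x^*)+\eta(x^*)>v(x^*)=w(x^*)$.

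The construction of $\eta$ is the technical heart, with the choice dictated by \eqref{Eq:betaStruct}. When $|\beta|\geq\beta_0>0$, the isotropic term $-\beta|\nabla(v+\eta)|^2I$ is genuinely perturbative, so I would use a barrier of exponential type, for instance $\eta(x)=\epsilon(e^{-\mu d(x)^2}-e^{-\mu d_0^2})$, where $d$ is a smooth proxy for $\mathrm{dist}(\cdot,\partial\mathcal V)$ and $\mu$ is large, engineered so that $\nabla^2\eta$ carries a $\mu^2$-sized positive-definite contribution which, in combination with \eqref{Eq:UCondPos} and the sign of $\beta$, dominates both the cross term $\alpha(\nabla\varphi\otimes\nabla\eta+\nabla\eta\otimes\nabla\varphi)$ and the isotropic loss $-\beta|\nabla\eta|^2I$ arising when a test function $\varphi$ touches $v+\eta$ from above at some $x_0$. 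When $\alpha,\beta$ are constant, $F$ is translation-invariant in $\psi$ and the construction can be made more elementary. The verification that $v+\eta$ is a viscosity subsolution is via the standard reformulation: $\varphi-\eta$ touches $v$ from above at $x_0$, so $F[\varphi-\eta](x_0)\in\bar U$, and the design of $\eta$ together with \eqref{Eq:UCondPos} and \eqref{Eq:LMonotone} upgrades this to $F[\varphi](x_0)\in\bar U$.

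The comparison step $v+\eta\leq w$ in $\mathcal V$ exploits that $\eta$ is designed to make $v+\eta$ a strict subsolution (a strictly positive-definite Hessian correction), so that a standard Ishii--Crandall--Lions doubling-of-variables argument applies; strictness defeats the loss of regularity at touching points that otherwise obstructs comparison for degenerate elliptic equations. Part (b) reduces to part (a) by a translation trick: if $w<v$ somewhere, then $m:=\min_{\bar\Omega}(w-v)<0$ is attained at some $x_0\in\Omega$ (not on $\partial\Omega$, since $w\geq v$ there); because $\alpha,\beta$ are constant and $L$ is $\psi$-independent, $\tilde w:=w-m$ is still a supersolution, satisfies $\tilde w\geq v$ in $\bar\Omega$ and $\tilde w>v$ on $\partial\Omega$, yet $\tilde w(x_0)=v(x_0)$, contradicting part (a). The main obstacle is the barrier construction in the $|\beta|\geq\beta_0$ case: because the viscosity definition admits test functions with arbitrarily large gradient at $x_0$, the perturbation estimate for $F[\varphi]-F[\varphi-\eta]$ must hold uniformly in $|\nabla\varphi|$, and balancing $\epsilon$ and $\mu$ so that the $\mu^2\eta$-sized Hessian contribution dominates the linear-in-$|\nabla\varphi|$ cross terms is exactly where the dichotomy \eqref{Eq:betaStruct} plays its decisive role.
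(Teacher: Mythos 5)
There is a genuine gap, and it sits exactly at the point you flag as ``the main obstacle'': your barrier $\eta=\epsilon(e^{-\mu d(x)^2}-e^{-\mu d_0^2})$ depends on $x$ alone, so the gain it can produce is a \emph{bounded} matrix (essentially $\nabla^2\eta$, of size $O(\epsilon\mu^2)$), whereas the terms it must absorb are not bounded. Indeed, if $\varphi$ touches $v+\eta$ from above at $x_0$ and you set $a=\nabla(\varphi-\eta)(x_0)$, $b=\nabla\eta(x_0)$, then
$F[\varphi](x_0)-F[\varphi-\eta](x_0)=\nabla^2\eta+\alpha\,(a\otimes b+b\otimes a+b\otimes b)-\beta\,(2a\cdot b+|b|^2)I$ plus the increments in the $s$-slot; choosing $a\perp b$ with $|a|$ large, the matrix $\alpha(a\otimes b+b\otimes a)$ has an eigenvalue $-|\alpha|\,|a|\,|b|$, so wherever $\nabla\eta\neq0$ the difference cannot be $\geq 0$ (let alone $\geq cI$) uniformly in the test gradient. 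Thus $v+\eta$ is not a (strict) subsolution for any choice of $\epsilon,\mu$: the obstruction is structural, not a matter of balancing constants. The same unboundedness defeats the ``standard Ishii--Crandall--Lions'' comparison step you invoke: the shared gradient $p=k(\hat x-\hat y)$ at the doubling maximum is uncontrolled, the monotonicity \eqref{Eq:LMonotone} has the \emph{wrong} sign for comparison (so your strictness would have to absorb an error of order $|p|^2\cdot\sup(v+\eta-w)$, which is not small), and for non-constant $\alpha,\beta$ the $x$-increments contribute errors of order $|\hat x-\hat y|\,|p|^2$, which are not controlled by $k|\hat x-\hat y|^2\to0$. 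A constant-size gain $cI$ cannot dominate errors growing linearly and quadratically in $|p|$.

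The paper's mechanism is different in kind: the gain must itself grow with the gradient, and this is achieved by perturbing with a convex function of the \emph{solution}, not of $x$. Concretely, one works with the sup/inf convolutions $v^\epsilon,w_\epsilon$ (since $v,w$ are merely semicontinuous) and sets $\tilde v=v^\epsilon+\mu(e^{\alpha_0|x|^2}+e^{-\beta_0 v^\epsilon}-\tau)$; Lemma \ref{Lem:FVSub} then yields $F[\tilde v]\geq(1-\mu\beta_0 e^{-\beta_0 v^\epsilon})F[v^\epsilon]+\mu K_0[(1+|\nabla v^\epsilon|^m)I+\nabla v^\epsilon\otimes\nabla v^\epsilon]$, i.e.\ a gain of exactly the same order as the dangerous cross terms --- this is where \eqref{Eq:betaStruct} (the sign condition on $\beta$) and \eqref{Eq:LMonotone} are used. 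Good touching points are produced by the Jensen/ABP argument (\cite[Lemma 3.5]{CabreCaffBook}) applied to $\tilde v-w_\epsilon$, the regularization error is controlled by Proposition \ref{key lemma}, the drift of the convolution centers is shown to be $O(\mu)$ (estimate \eqref{Eq:30Rep}), and the final contradiction uses the cone properties \eqref{Eq:UCone}, \eqref{Eq:UCond*S} rather than a uniform strict inequality. Your reduction of (b) to (a) by translating $w$ is correct (this is Remark \ref{rem:SCP=>CP}, valid because constant $\alpha,\beta$ make $L$ independent of $s$), and your isolation of a coincidence component is harmless though unnecessary; but it is telling that the only place an $x$-dependent barrier appears in the paper is the special case $\alpha$ constant, $\beta\equiv0$, where after the substitution $e^{\alpha\psi}$ the operator is a pure Hessian and $F[\psi+\mu|x|^2]=F[\psi]+2\mu I$ exactly --- i.e., your barrier idea works precisely when there are no gradient terms to absorb.
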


When $w$ and $v$ are locally Lipschitz and $F[\psi] = A[\psi]$, Theorem \ref{thm:CPQuad} was established in \cite{Li09-CPAM}. 

The proof of Theorem \ref{thm:CPQuad} yields the propagation principle for an even larger class of operators; see Theorem \ref{thm:CPNUE} (where the assumption on the quadratic dependence of $F[\psi]$ on $\nabla\psi$ is somewhat relaxed to a super-linear dependence). One ingredient of the proof is a first variation result which, roughly speaking, allows one to perturb a given function $\psi$ to another function $\tilde\psi$ such that $F[\tilde\psi]$ are either ``more inside'' or ``more outside'' the set $U$ than $F[\psi]$ in a detailed controlled fashion. There are two delicate points of this process. On the one hand, one needs to ensure that the gain obtained is strong enough to counter-balance the error accrued in either regularization or handling the difficulties created by degenerate ellipticity. On the other hand, the whole process is carried out in such a way that it depends only on an upper bound and a lower bound of $\psi$, and not on $\nabla \psi$ or $\nabla^2 \psi$. It is in this first variation argument that the assumptions that $\beta$ does not change sign and $L$ is non-decreasing are crucially used. See subsection \ref{Sec:CPCounterex} for examples which hint that these assumptions cannot simply be dropped.

Comparison principles for different classes of (degenerate) elliptic operators are available in the literature. See \cite{AmendolaGaliseVitolo13-DIE, BardiDaLio99-AM, HBDolcettaPorretaRosi15-JMPA, BirindelliDemengel04-AFSTM, BirindelliDemengel07-CPAA, UserGuide, DolcettaVitolo07-MC, DolcettaVitolo-preprint, HartmanNirenberg, HarveyLawsonSurvey2013, Ishii89-CPAM, IshiiLions90-JDE,Jensen88-ARMA, KawohlKutev98-AM, KawohlKutev00-FE, KawohlKutev07-CPDE, KoikeKosugi15-CPAA, KoikeLey11-JMAA, Trudinger88-RMI} and the references therein. Most of these works assumed a kind of ``properness/non-degeneracy'' of the operator with respect to the unknown $\psi$ (e.g. $L$ is decreasing with respect to $s$) which is not applicable to our setting (see condition \eqref{Eq:LMonotone}). In the present paper, we exploit instead some non-degeneracy with respect to the derivatives $\nabla \psi$ of the unknown.

It is natural to ask if the method we follow here can be tweaked together with the more familiar treatment for proper operators to treat a broader class of operators, but this goes beyond the scope of the present paper. We however note that (non-strict) properness of the operator is far from ensuring the validity of a comparison/propagation principle; see Proposition \ref{prop:CtexNonDec}.

Using results on removable singularities in \cite{CafLiNir11}, we obtain the following comparison principle on domains with singularities when $U$ satisfies in addition the condition
 \begin{equation}
U\subset\{M\in\Snn:\mbox{tr}(M)>0\},\label{gooo}
 \end{equation}
where $\mbox{tr}(M)$ denotes the trace of $M$. For the proof, see Section \ref{sec:CP}.

\begin{cor}
\label{cor:CPSing}
Let $\Omega \subset \RR^n$ ($n \geq 2$) be a bounded non-empty open set, $E\subset \Omega$ be a closed set with zero Newtonian capacity, and $(F,U)$ be as in Theorem \ref{thm:CPQuad} with constant $\alpha$ and $\beta$. If $w\in LSC(\bar\Omega)$ and $v\in USC(\bar\Omega \setminus E)$ satisfy
\begin{align*}
w &\text{ is a supersolution to \eqref{Eq:FpsiEq} in $\Omega$},\\
v &\text{ is a subsolution to \eqref{Eq:FpsiEq} in $\Omega \setminus E$},
\end{align*} 
$w \geq v$ in $\Omega \setminus E$ and $w > v$ on $\partial\Omega$, and if
\begin{align}
\text{either } & \sup_{\Omega \setminus E} v < + \infty,\label{Eq:anb1}\\
\text{or } & \alpha - n \beta < 0,\label{Eq:anb2}
\end{align}
 then $\inf_{\Omega \setminus E} (w - v) > 0$.
\end{cor}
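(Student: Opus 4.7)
The plan is to extend the subsolution $v$ across the singular set $E$ to a subsolution of \eqref{Eq:FpsiEq} on all of $\Omega$, and then apply Theorem \ref{thm:CPQuad} to the extended pair. Define the upper semicontinuous envelope
\[
\tilde v(x) = \limsup_{\substack{y \to x \\ y \in \bar\Omega \setminus E}} v(y), \quad x \in \bar\Omega.
\]
Since $E$ is a closed subset of $\Omega$, the set $\bar\Omega \setminus E$ is relatively open in $\bar\Omega$ and contains $\partial\Omega$; together with the upper semicontinuity of $v$ on $\bar\Omega \setminus E$ this gives $\tilde v = v$ on $\bar\Omega \setminus E$, and in particular $\tilde v = v$ on $\partial\Omega$.

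The key step is to show that $\tilde v \in USC(\bar\Omega)$ is a viscosity subsolution of \eqref{Eq:FpsiEq} on all of $\Omega$. Since $(F,U)$ has constant $\alpha$ and $\beta$ and $U$ satisfies the trace positivity condition \eqref{gooo}, this should follow from the removable singularity theorems of \cite{CafLiNir11}. The two alternative assumptions \eqref{Eq:anb1} and \eqref{Eq:anb2} correspond precisely to the two sets of hypotheses under which those results apply: an explicit $L^\infty$ bound in the first case, and an intrinsic sign control in the second, coming from the identity $\mbox{tr}(F[\psi]) = \Delta \psi + (\alpha - n\beta)|\nabla \psi|^2$ together with \eqref{gooo}, which forces any viscosity subsolution to satisfy $\Delta v \geq 0$ in the viscosity sense and so enables the classical removability theory for subharmonic functions across polar sets.

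Granting this extension, the rest is short. Since $w \geq v = \tilde v$ on $\partial\Omega$, Theorem \ref{thm:CPQuad}(b) (applicable as $\alpha,\beta$ are constant) yields $w \geq \tilde v$ throughout $\bar\Omega$; combined with the strict boundary inequality $w > \tilde v$ on $\partial\Omega$, Theorem \ref{thm:CPQuad}(a) upgrades this to $w > \tilde v$ everywhere in $\Omega$. Because $w - \tilde v \in LSC(\bar\Omega)$ is strictly positive on the compact set $\bar\Omega$, it attains a positive minimum $\mu > 0$, and since $\tilde v = v$ on $\bar\Omega \setminus E$, we conclude
\[
\inf_{\bar\Omega \setminus E}(w - v) \;\geq\; \mu \;>\; 0.
\]

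The main obstacle is the extension step: locating the precise statement in \cite{CafLiNir11} and verifying that under either \eqref{Eq:anb1} or \eqref{Eq:anb2} the upper semicontinuous envelope $\tilde v$ indeed extends $v$ as a viscosity subsolution of \eqref{Eq:FpsiEq} across $E$ for operators of the form \eqref{Eq:FConstab} with constant coefficients and $U$ satisfying \eqref{gooo}. Once that is in place, no further use of degenerate elliptic machinery is required beyond Theorem \ref{thm:CPQuad}.
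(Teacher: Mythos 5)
Your overall strategy (extend $v$ across $E$ by its upper semicontinuous envelope and then apply Theorem \ref{thm:CPQuad}) is the same as the paper's, and your endgame is fine, but you defer exactly the step that constitutes the proof: showing that the extension is a viscosity subsolution in all of $\Omega$, and the justification you sketch for it does not work as stated. Condition \eqref{gooo} only yields $\Delta v + (\alpha - n\beta)|\nabla v|^2 \geq 0$ in $\Omega\setminus E$, which gives $\Delta v \geq 0$ only when $\alpha - n\beta \leq 0$; so under \eqref{Eq:anb1} alone (with $\alpha - n\beta > 0$ allowed) your ``intrinsic sign control'' is unavailable. Even when $\Delta v \geq 0$ does hold, classical removability of polar sets for subharmonic functions requires $v$ to be bounded above near $E$ (e.g. $|x|^{2-n}$ is harmonic in a punctured ball but has no subharmonic extension), and under \eqref{Eq:anb2} such a bound is not a hypothesis --- it must be proved; it is also needed for $\tilde v$ to be finite on $E$, since a USC subsolution cannot take the value $+\infty$. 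Finally, subharmonicity of the extension is not the same as the fully nonlinear statement $F[\tilde v]\in \bar U$ in $\Omega$: the removability result of \cite{CafLiNir11} quoted as Theorem \ref{thm:CLNRemSing} is formulated for LSC functions lying in $\Snn\setminus U$ and requires the auxiliary hypothesis $\Delta u \leq C$ in all of $\Omega$ (i.e. already across $E$), so it cannot be invoked directly for $\tilde v$.

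The paper fills precisely these gaps: it first replaces $v$ by $\max(v,c)$ for a suitable constant so that $v$ is bounded below; if \eqref{Eq:anb2} holds, it shows $\sup_{\Omega\setminus E} v < +\infty$ by applying the minimum principle to the positive superharmonic function $e^{-v/|\alpha-n\beta|}$ on $\Omega\setminus E$ (using that $E$ is polar), thereby reducing to case \eqref{Eq:anb1}; it then sets $u=-e^{Cv}$ with $C>|\alpha-n\beta|$, which is bounded, LSC, and satisfies $\Delta u\leq 0$ in $\Omega\setminus E$, extends $\Delta u \leq 0$ across $E$ by classical removability for the Laplacian (zero Newtonian capacity), and only then applies Theorem \ref{thm:CLNRemSing} with $\tilde U = \Snn\setminus(-\bar U)$ to conclude $F[v]\in\bar U$ in all of $\Omega$. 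Without this chain (lower bound, proof of the upper bound in case \eqref{Eq:anb2}, exponential substitution, removability for $\Delta$ first, then the fully nonlinear removability theorem), the extension step on which your argument rests is unjustified, so the proposal has a genuine gap at its core. Your concluding step is correct; alternatively, since $L$ is independent of $s$ for constant $\alpha,\beta$, one may simply add the constant $\delta=\min_{\partial\Omega}(w-v)>0$ to $v$ and use only Theorem \ref{thm:CPQuad}(b).
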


\begin{rem}
It is interesting to identify the set $S_k$ of $(\alpha,\beta)$ for which one cannot drop the assumption that $v$ is bounded from above when $U$ is the set of symmetric matrices whose eigenvalues belong to $\Gamma_k$ with $2 \leq k \leq n$. Note that by the above result, $S_k \subset \{\alpha - n\beta \geq 0\}$. For $k = 1$, equation \eqref{Eq:FpsiEq} becomes $\Delta\psi + (\alpha - n\beta)|\nabla \psi|^2 = 0$, from which one can see that $S_1 \subset \{\alpha - n\beta > 0\}$. In fact, $S_1 = \{\alpha - n\beta > 0\}$. To see this, note that the functions $\psi_\mu(x)=\frac{1}{\alpha - n\beta} \ln(|x|^{2-n} + \mu)$ with $\mu \geq 0$ are solutions of \eqref{Eq:FpsiEq} in $B_1(0) \setminus \{0\}$. In particular, $w = \psi_1$ is a supersolution of \eqref{Eq:FpsiEq} in $B_1(0)$, $v = \psi_0$ is a subsolution of \eqref{Eq:FpsiEq} in $B_1(0) \setminus \{0\}$, $w \geq v$ in $B_1(0) \setminus \{0\}$, and $w > v$ on $\partial B_1(0)$, but $\inf_{B_1(0) \setminus \{0\}} (w - v) = 0$.
\end{rem}

When $F[\psi]$ is the conformal operator $A[\psi]$, Corollary \ref{cor:CPSing} was proved by the first named author in \cite{Li09-CPAM} under the assumption that $E\subset \Omega$ containing at most finitely many points, $U=\{M\in\Snn:\lambda(M)\in\Gamma\}$, $w\in C^{0,1}(\bar\Omega)$ and $v\in C^{0,1}(\bar\Omega\setminus E)$.

A related issue of interest is whether the strong maximum principle and the Hopf lemma holds. It turns out that, in this degenerate elliptic setting, both fail for a large class of operators. See \cite{CafLiNir11, LiNir-misc} for further discussion.

Last but not least, the proof of Theorem \ref{thm:regularity} uses not only the comparison principle (Theorem \ref{thm:CPQuad}) but also conformal invariance properties of $A[\psi]$ (i.e. of $A^u$). We remark that, for general $F$ and $U$, the comparison principle itself is far from ensuring (Lipschitz) regularity of viscosity solutions. See Section \ref{Sec:LipRegVS} for further discussion.

The rest of the paper is structured as follows. We start in Section \ref{Sec:Prelim} with some preliminaries about regularizations of semi-continuous functions by lower and upper envelops. In Section \ref{sec:CP}, we prove a generalization of Theorem \ref{thm:CPQuad} for more general operators and give counterexamples to highlight the importance of the conditions in the theorem. In Section \ref{Sec:Perron}, we prove the uniqueness Theorem \ref{thm:Uniq} and the existence result Theorem \ref{thm:Perron}. Finally, in Section \ref{Sec:LipRegVS}, we prove the regularity result Theorem \ref{thm:regularity} together with some generalization.

\section{Preliminaries}\label{Sec:Prelim}

We briefly recall a well-known regularization of semi-continuous functions which will be used later in the paper.

Assume $n\geq1$ and let $\Omega$ be an open bounded set in $\mathbb{R}^{n}$. For a function $v \in USC(\bar \Omega)$ and $\epsilon > 0$, we define the $\epsilon$-upper envelop of $v$ by
\begin{equation}
v^{\epsilon}(x)
	:=\max\limits_{y\in\bar\Omega}\Big\{v(y) -\frac{1}{\epsilon}|y-x|^{2}\Big\}, \qquad \forall x \in \bar \Omega.
		\label{Eq:UpEnvDef}
\end{equation}
Likewise, for a function $w \in LSC(\bar \Omega)$, its $\epsilon$-lower envelop is defined by
\begin{equation}
w_{\epsilon}(x):=\min\limits_{y\in\bar\Omega}\Big\{w(y) +\frac{1}{\epsilon}|y-x|^{2}\Big\},\qquad\forall x\in\bar\Omega.
	\label{Eq:LowEnvDef}
\end{equation}

Although our definition of upper and lower envelops is slightly different from the definition in \cite{CabreCaffBook}, all relevant properties established in \cite[Lemma 5.2]{CabreCaffBook} remain valid with minor modification. We collect below some useful properties.

\begin{enumerate}[(i)]
\item\label{UpLowPropi} $v^\epsilon, w_\epsilon$ belong to $C(\bar\Omega)$, are monotonic in $\epsilon$ and 
\begin{equation}
\text{$v_\epsilon \rightarrow v$, $w_\epsilon \rightarrow w$ pointwise as $\epsilon \rightarrow 0$.}
	\label{Eq:UpLowConv}
\end{equation}
\item\label{UpLowPropii} $v^{\epsilon}$ and $w_{\epsilon}$ are punctually second order differentiable (see e.g. \cite{CabreCaffBook} for a definition) almost everywhere in $\Omega$ and 
\begin{equation}
\nabla^{2}v^{\epsilon}\geq-\frac{2}{\epsilon}I,\quad\nabla^{2}w_{\epsilon}\leq \frac{2}{\epsilon}I,\quad\mbox{ a.e. in }\Omega.\label{utr}
\end{equation}
\item\label{UpLowPropiii} For any $x \in \Omega$, there exists $x^* = x^*(x) \in \bar \Omega$ such that 
$$v^\epsilon(x) = v(x^*)  - \frac{1}{\epsilon}|x^* - x|^2 \text{ and } |x^* - x|^2 \leq \epsilon(\max_{\bar\Omega} v - v(x)).$$
Likewise, for any $x \in \Omega$, there exists $x_* = x_*(x) \in \bar \Omega$ such that 
$$w_\epsilon(x) = w(x_*)  + \frac{1}{\epsilon}|x_* - x|^2 \text{ and } |x_* - x|^2 \leq \epsilon(w(x) - \min_{\bar\Omega} w).$$

\item \label{UpLowPropiv} If it holds for some non-empty open subset $\omega$ of $\Omega$ that $\inf_{\omega} v > -\infty$ and $\sup_{\omega} w < +\infty$, then
\begin{equation}
|\nabla v^\epsilon| \leq \frac{2}{\epsilon^{\frac{1}{2}}} \big[\max_{\bar\Omega} v - \inf_{\omega} v\big]^{\frac{1}{2}} \text{ and } |\nabla w_\epsilon| \leq \frac{2}{\epsilon^{\frac{1}{2}}} \big[\sup_{\omega} w - \min_{\bar\Omega} w\big]^{\frac{1}{2}}
	\label{Eq:UpLowGradEst}
\end{equation}
almost everywhere in $\omega$.

\item \label{UpLowPropv} The bounds for $|x^* - x|$ and $|x_* - x|$ in \eqref{UpLowPropiii} can be improved when $v$ and $w$ are more regular. In fact, if $|v(x) - v(y)| \leq m(|x - y|)$ for all $x, y \in \bar\Omega$ and for some non-negative continuous non-decreasing function $m: [0,\infty) \rightarrow [0,\infty)$ satisfying $m(0) = 0$, then
\[
|x^* - x| \leq \big[\epsilon\,m((2\epsilon\,\sup_{\bar\Omega} |v|)^{1/2})\big]^{1/2}.
\]
Analogously, if $|v(x) - v(y)| \leq m(|x - y|)$ for all $x, y \in \bar\Omega$, then
\[
|x_* - x| \leq \big[\epsilon\,m((2\epsilon\,\sup_{\bar\Omega} |w|)^{1/2})\big]^{1/2}.
\]
Nevertheless, the bounds for $|x^* - x|$ and $|x_* - x|$ in \eqref{UpLowPropiii} are generally sharp for semi-continuous functions.
\end{enumerate}

Properties \eqref{UpLowPropi}-\eqref{UpLowPropiii} can be found in \cite{CabreCaffBook}. To see Property \eqref{UpLowPropiv}, we let $x_0 \in \omega$ be a point of differentiablity of $v^\epsilon$, and estimate, for $x_1 \in \Omega$,
\begin{align*}
v^\epsilon(x_0) 
	&\geq v((x_1)^*)  - \frac{1}{\epsilon}|(x_1)^* - x_0|^2\\
	&\geq v((x_1)^*)  - \frac{1}{\epsilon}|(x_1)^* - x_1|^2  - \frac{2}{\epsilon}|(x_1)^* - x_1||x_1 - x_0| -  \frac{1}{\epsilon}|x_1 - x_0|^2\\
	&= v^\epsilon(x_1) - \frac{2}{\epsilon}|(x_1)^* - x_1||x_1 - x_0| -  \frac{1}{\epsilon}|x_1 - x_0|^2, \text{ for all } x_0, x_1 \in \Omega,
\end{align*}
which implies, in view of Property \eqref{UpLowPropiii}, that
\[
\frac{v^\epsilon(x_1) - v^\epsilon(x_0)}{|x_1 - x_0|} \leq \frac{2}{\epsilon^{\frac{1}{2}}}|\max_{\bar\Omega} v - v(x_1)| + \frac{1}{\epsilon}|x_1 - x_0|.
\]
Sending $x_1 \rightarrow x_0$ and recalling Property \eqref{UpLowPropii}, we obtain the assertion. Property \eqref{UpLowPropv} follows from \eqref{UpLowPropiii} and the estimate
\begin{align*}
\frac{1}{\epsilon}|x^* - x|^2 
	&= v(x^*)  - v^\epsilon(x) \leq v(x^*) - v(x) \leq m(|x^* - x|).
\end{align*}

The sharpness of the estimates for $|x^* - x|$ and $|x_* - x|$ in \eqref{UpLowPropiii} is demonstrated by the following example. Consider $\Omega = (-1,1)$. For $x \in[-1,1]$, define
\[
w(x) = \left\{\begin{array}{ll}
	1 &\text{ if } 2^{-(k+2)} < |x| \leq 2^{-(k+1)} \text{ for some } k \geq 0,\\
	2 - 2^{k+1}|x| &\text{ if } 2^{-(k+1)} < |x| \leq 2^{-k} \text{ for some } k \geq 0,\\
	0 &\text{ if } x = 0.
\end{array}\right.
\]
Then $w \in LSC([-1,1]) \cap L^\infty(-1,1)$. For $k > 1$, let $\epsilon_k = 2^{-2(2k+1)}$ and $x_k = 2^{-(2k+3)}$. We have
\[
w_{\epsilon_k}(x_k) \leq w(0)  + \frac{1}{\epsilon_k}|x_k|^2 = \frac{1}{16}.
\]
On the other hand, for $|y - x_k| < 2^{-(2k+4)} = \frac{1}{8} \sqrt{\epsilon_k}$, we have $w(y) > \frac{1}{2}$ and 
\[
w(y)  + \frac{1}{\epsilon_k}|y - x_k|^2 \geq \frac{1}{2} .
\]
It follows that $|(x_k)_* - x_k| \geq \frac{1}{8} \sqrt{\epsilon_k}$.

We conclude the section with a simple lemma about the stability of envelops with respect to semi-continuity.

\begin{lem}\label{lem:EquiSC}
Assume that $v \in USC(\bar\Omega)$ and $\inf_{\bar\Omega} v > -\infty$. Then for all sequences $\epsilon_j \rightarrow 0$ and $x_j \rightarrow x \in \Omega$, there holds
\[
\limsup_{j \rightarrow \infty} v^{\epsilon_j}(x_j) \leq v(x).
\]
Likewise, if $w \in LSC(\bar\Omega)$ and $\sup_{\bar\Omega} w <  +\infty$, then
\[
\liminf_{j \rightarrow \infty} w_{\epsilon_j}(x_j) \geq w(x).
\]
\end{lem}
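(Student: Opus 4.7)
The plan is to leverage Property \eqref{UpLowPropiii} directly together with the upper semi-continuity of $v$.

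\textbf{Step 1 (Locating the maximizer).} For each $j$, apply Property \eqref{UpLowPropiii} to produce a point $x_j^{\ast}\in\bar\Omega$ satisfying
\[
v^{\epsilon_j}(x_j) = v(x_j^{\ast}) - \frac{1}{\epsilon_j}|x_j^{\ast}-x_j|^2, \qquad |x_j^{\ast}-x_j|^2 \leq \epsilon_j\bigl(\max_{\bar\Omega} v - v(x_j)\bigr).
\]
Since $v\in USC(\bar\Omega)$ on the compact set $\bar\Omega$, it attains its maximum, so $\max_{\bar\Omega} v<+\infty$; and by assumption $\inf_{\bar\Omega} v>-\infty$. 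Therefore $\max_{\bar\Omega} v - v(x_j) \leq \max_{\bar\Omega} v - \inf_{\bar\Omega} v$ is bounded uniformly in $j$.

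\textbf{Step 2 (Convergence of maximizers).} From the previous bound, $|x_j^{\ast}-x_j|^2 \leq C\,\epsilon_j \to 0$, and since $x_j \to x \in \Omega$, it follows that $x_j^{\ast} \to x$ as $j \to \infty$.

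\textbf{Step 3 (Applying upper semi-continuity).} Dropping the non-negative quadratic penalty yields $v^{\epsilon_j}(x_j) \leq v(x_j^{\ast})$. Hence
\[
\limsup_{j\to\infty} v^{\epsilon_j}(x_j) \;\leq\; \limsup_{j\to\infty} v(x_j^{\ast}) \;\leq\; v(x),
\]
where the final inequality uses $x_j^{\ast} \to x$ and $v \in USC(\bar\Omega)$.

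\textbf{Step 4 (The LSC case).} For $w \in LSC(\bar\Omega)$ with $\sup_{\bar\Omega} w<+\infty$, apply the analogous half of Property \eqref{UpLowPropiii} to find $x_{j,\ast}\in\bar\Omega$ with $w_{\epsilon_j}(x_j)=w(x_{j,\ast})+\frac{1}{\epsilon_j}|x_{j,\ast}-x_j|^2$ and $|x_{j,\ast}-x_j|^2\leq\epsilon_j\bigl(w(x_j)-\min_{\bar\Omega} w\bigr)$; the symmetric argument gives $x_{j,\ast}\to x$ and $\liminf_j w_{\epsilon_j}(x_j)\geq\liminf_j w(x_{j,\ast})\geq w(x)$.

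There is no real obstacle here: the result is essentially an equi-semicontinuity statement that follows formally from the variational definition of the envelops and the bound on how far the maximizer/minimizer can be from the base point. The only thing to be slightly careful about is ensuring $v$ and $w$ are both bounded (above and below) on $\bar\Omega$, which is guaranteed by combining the semicontinuity with the hypothesis of the lemma.
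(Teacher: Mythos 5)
Your proof is correct and follows essentially the same route as the paper: both arguments rest on Property \eqref{UpLowPropiii} to locate the maximizer $x_j^{\ast}$, use $\inf_{\bar\Omega} v > -\infty$ (and boundedness above, automatic from upper semi-continuity on the compact $\bar\Omega$) to force $x_j^{\ast}\to x$, and then invoke upper semi-continuity of $v$ at $x$; the paper merely phrases this as a proof by contradiction with an explicit $\delta$--$\theta$ unpacking of semicontinuity, while you state it directly via the sequential $\limsup$. No gap.
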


\begin{proof} We will only show the first assertion. Assume by contradiction that there exist some sequences $\epsilon_j \rightarrow 0$, $x_j \rightarrow x \in \Omega$ such that
\[
v^{\epsilon_j}(x_j) \geq v(x) + 2\delta \text{ for some } \delta > 0.
\]
By the semi-continuity of $v$, there exists $\theta > 0$ such that 
\[
v(y) \leq v(x) + \delta \text{ for all } |y - x| < \theta.
\]
By property \eqref{UpLowPropiii}, there exists $\hat x_j$ such that
\[
v^{\epsilon_j}(x_j) = v(\hat x_j) - \frac{1}{\epsilon_j}|x_j - \hat x_j|^2 \text{ and } |x_j - \hat x_j|^2 \leq \epsilon_j (\sup_{\bar\Omega} v - v(x_j)) \rightarrow 0,
\]
where we have used $\inf_{\bar\Omega} v > -\infty$. It then follows that $|\hat x_j - x| < \theta$ for all sufficiently large $j$ and so
\[
v^{\epsilon_j}(x_j) \leq v(\hat x_j) \leq v(x) + \delta,
\]
which amounts to a contradiction.
\end{proof}

\section{The principle of propagation of touching points}\label{sec:CP}

In this section, we prove Theorem \ref{thm:CPQuad}. We will establish the propagation principle for more general operators of the form
\begin{equation}
F[\psi] = \nabla^2 \psi + L(\cdot,\psi,\nabla\psi),
 \label{Eq:FDef}
\end{equation}
where $L: \Omega \times \RR \times \RR^n \rightarrow \mathcal{S}^{n \times n}$, under some structural assumptions on $L$ and $U$ which we will detail below. (Clearly, Definition \ref{Def:ViscositySolution} extends to this general setting.)

It is natural to require that $L$ be locally Lipschitz continuous. When $L$ is only H\"older continuous, the propagation principle fails in a manner similar to the non-uniqueness of first order ODE with non-Lipschitz right hand side. The following example is well-known: Consider the equation $\Delta \psi = |\nabla \psi|^\gamma$ with $\gamma \in (0,1)$, i.e. $F[\psi] = \nabla^2 \psi - |\nabla \psi|^\gamma I$ and $U = \{M \in \Snn: tr(M) > 0\}$. This equation admits $\psi(x) \equiv 0$ and $\hat\psi(x) = \frac{1}{(\lambda + 1)(\lambda + n - 1)^{\lambda}} |x|^{\lambda+1}$ as classical solutions, where $\lambda  = \frac{1}{1 - \gamma}$. As $\hat \psi \geq \psi$ on $\RR^n$ and equality holds only at $x = 0$, the propagation principle fails.

We note that the degenerate ellipticity condition \eqref{Eq:UCondPos} and local Lipschitz regularity of $L$ are far from enough to ensure the correctness of the propagation principle (even for rotationally symmetric and \emph{proper} operators); see subsection \ref{Sec:CPCounterex} for counterexamples.

The following structural conditions on $(F,U)$ are directly motivated by the conformal operator $A[\psi]$. First, we assume that $U$ satisfies 
\begin{equation}
A\in U, c\in(0,1)\Rightarrow cA\in U.
\label{Eq:UCond*S}
\end{equation}
Second, we assume that, for every $R > 0$ and $\Lambda > 0$, there exist $m \geq 0$, $\bar \theta > 0$ and $C > 0$ such that, for $x \in \Omega$ and $p \in \RR^n$,
\begin{align}
|\nabla_x L(x,s,p)| 
	&\leq C|p|^m \qquad \forall~ |s| \leq R,
	\label{Eq:Lcond0}\\
0 \leq L(x,s', p) - L(x,s,p) 
	 &\leq C(s' - s)\,|p|^m\,I \qquad \forall~ -R \leq s \leq s' \leq R,
	 \label{Eq:Lcond1}
\end{align}
\begin{align}
&p \cdot \nabla_p L(x, s,p) - L(s,x,p )
\nonumber\\
	&\qquad 
	+ \theta\Lambda |\nabla_{p} L(x, s,p)|\,I  - \theta\,I
	 \leq C p \otimes p - \frac{1}{C}\, |p|^m\,I \qquad\forall~\theta \in [0,\bar\theta], |s| \leq R.
	 \label{Eq:Lcond2}
\end{align}
Note that, \eqref{Eq:Lcond1} and \eqref{Eq:Lcond2} should be understood as inequalities between real symmetric matrices: $M \leq N$ if and only if $N - M$ is non-negative definite. Also, \eqref{Eq:Lcond1} implies that $L$ is non-decreasing in $s$.

\begin{example}\label{Ex:QuadF} For all $m \geq 2$ and $\alpha, \beta \in C^{0,1}_{loc}(\RR)$ such that $\beta(s) > \beta_0 > 0$ for some constant $\beta_0$, $\alpha$ is non-decreasing and $\beta$ is non-increasing, the operator
\[
F[\psi] = \nabla^2 \psi + \alpha(\psi)\,\nabla\psi \otimes \nabla\psi - \beta(\psi)\,|\nabla \psi|^m\,I
\]
satisfies conditions \eqref{Eq:Lcond0}-\eqref{Eq:Lcond2}.
\end{example}

We now state our principle of propagation of touching points for operators of the form \eqref{Eq:FDef}.

\begin{thm}
Let $\Omega\subset\mathbb{R}^{n}$ ($n \geq 2$) be a non-empty bounded open set, $L: \Omega \times \RR \times \RR^n \rightarrow \Snn$ be locally Lipschitz continuous and satisfy \eqref{Eq:Lcond0}, \eqref{Eq:Lcond1} and \eqref{Eq:Lcond2} for some $m > 1$, $F$ be given by \eqref{Eq:FDef} and $U$ be a non-empty open 
subset of $\Snn$ satisfying \eqref{Eq:UCondPos} and \eqref{Eq:UCond*S}. If $w \in LSC(\bar\Omega)$ and $v\in USC(\bar\Omega)$ are respectively a supersolution and a subsolution of \eqref{Eq:FpsiEq} in $\Omega$, and if $w \geq v$ in $\Omega$ and $w > v$ on $\partial\Omega$, then $w > v$ in $\Omega$.
\label{thm:CPNUE}
\end{thm}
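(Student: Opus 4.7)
The plan is to argue by contradiction: assume that the touching set $K := \{x \in \Omega : w(x) = v(x)\}$ is non-empty. Since $w-v$ is lower semicontinuous on $\bar\Omega$ and $w > v$ on the compact set $\partial\Omega$, $K$ is a compact subset of $\Omega$. By a standard touching-ball argument I pick $x_0 \in K$ and an open ball $B = B_R(y_0) \Subset \Omega$ with $x_0 \in \partial B$ and $\bar B \cap K = \{x_0\}$, so that $w > v$ on $\bar B \setminus \{x_0\}$, with uniform strict gap $w - v \geq \delta(\rho) > 0$ on $\bar B \setminus B_\rho(x_0)$ for each $\rho > 0$.

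I then regularize using the upper/lower envelopes $v^\epsilon$ and $w_\epsilon$ from Section \ref{Sec:Prelim}. By the envelope properties established there, both are Lipschitz on a slightly shrunken ball $B^{(\epsilon)} \Subset B$ with a priori gradient bound \eqref{Eq:UpLowGradEst}, twice punctually differentiable almost everywhere, and (by standard viscosity stability under envelope regularization) a subsolution and supersolution of \eqref{Eq:FpsiEq} in $B^{(\epsilon)}$ respectively. By Lemma \ref{lem:EquiSC}, for each fixed $\rho > 0$ the strict separation persists for small $\epsilon$: $w_\epsilon - v^\epsilon \geq \tfrac12\delta(\rho)$ on $\partial B^{(\epsilon)} \setminus B_\rho(x_0)$.

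The crux is a first-variation lemma tailored to \eqref{Eq:Lcond0}--\eqref{Eq:Lcond2}: I perturb $\tilde v_\theta := v^\epsilon + \theta \xi$, where $\xi$ is a Hopf-type barrier such as $\xi(x) = e^{-\gamma|x-y_0|^2} - e^{-\gamma R^2}$ with $\gamma$ large, vanishing on $\partial B$ and strongly positive inside $B$. At any point of punctual twice differentiability of $v^\epsilon$, expanding in $\theta$,
\[
F[\tilde v_\theta] - F[v^\epsilon] = \theta\bigl[\nabla^2 \xi + L_s(\cdot, v^\epsilon, \nabla v^\epsilon)\,\xi + \nabla_p L(\cdot, v^\epsilon, \nabla v^\epsilon) \cdot \nabla \xi\bigr] + O(\theta^2).
\]
Condition \eqref{Eq:Lcond1} absorbs the $L_s$ contribution, and \eqref{Eq:Lcond2}---whose negative term $-\tfrac{1}{C}|p|^m I$ is available precisely thanks to the super-linearity $m > 1$---absorbs the $\nabla_p L \cdot \nabla\xi$ term uniformly over the range of $|\nabla v^\epsilon|$ permitted by \eqref{Eq:UpLowGradEst}, so that the bracket is bounded below by $c_0 I$ with $c_0 > 0$ depending only on $\|v\|_\infty$, $R$ and $\gamma$. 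Together with \eqref{Eq:UCondPos} and \eqref{Eq:UCond*S}, this places $F[\tilde v_\theta]$ a definite distance inside $U$, so $\tilde v_\theta$ is a \emph{strict} viscosity subsolution on $B^{(\epsilon)}$.

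The contradiction then follows in Hopf-lemma style. By the persistent boundary separation on $\partial B^{(\epsilon)}$ and the smallness of $\theta \xi$ near $x_0$, for small $\theta, \epsilon$ the function $\tilde v_\theta - w_\epsilon$ either is nonpositive throughout $\bar B^{(\epsilon)}$ (which, after a small tangential perturbation placing the test point slightly inside $B$, contradicts $v(x_0)=w(x_0)$ in the limit $\epsilon \to 0$ via \eqref{Eq:UpLowConv}) or attains a positive maximum at some interior $\bar x \in B^{(\epsilon)}$. In the latter case, Jensen's maximum principle furnishes points of joint twice differentiability of $v^\epsilon$ and $w_\epsilon$ arbitrarily close to $\bar x$ at which $\nabla \tilde v_\theta = \nabla w_\epsilon$ and $\nabla^2\tilde v_\theta \leq \nabla^2 w_\epsilon$; combined with the strict subsolution inequality for $\tilde v_\theta$, the supersolution inequality for $w_\epsilon$, and the $s$-monotonicity in \eqref{Eq:Lcond1}, one forces $F[w_\epsilon](\bar x)$ strictly inside $U$, contradicting $F[w_\epsilon]\in\Snn\setminus U$. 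The main obstacle is the first-variation step: the strict gain $c_0 I$ must be produced using only an $L^\infty$-bound of $v$ (and not gradient or Hessian information), which is exactly the delicate balance engineered in \eqref{Eq:Lcond2}, and which explains both the essential role of the super-linear growth $m > 1$ and of the $s$-monotonicity of $L$.
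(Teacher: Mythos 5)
There are genuine gaps, and they sit exactly at the two places you flag as "the crux". First, the envelopes $v^\epsilon,w_\epsilon$ are \emph{not} sub/supersolutions of \eqref{Eq:FpsiEq}: since $L$ depends on $x$ and $s$, and $v(x^*)\geq v^\epsilon(x)$ with $L$ non-decreasing in $s$ (the unfavourable direction), the sup-convolution only satisfies the equation up to an error of order $\big(|x^*-x|+\tfrac1\epsilon|x^*-x|^2\big)|\nabla v^\epsilon|^m$ — this is precisely Proposition \ref{key lemma} in the paper. With $|\nabla v^\epsilon|\sim\epsilon^{-1/2}$ from \eqref{Eq:UpLowGradEst}, this error can be as large as $\epsilon^{-m/2}$, so it cannot be dismissed by "standard viscosity stability"; in the paper's argument the bulk of the work (the claim \eqref{Eq:30Rep}) goes into showing that at the specially chosen contact points the factor $\tfrac1\epsilon|x_*-x|^2+\tfrac1\epsilon|x^*-x|^2$ is $O(\mu^2)$, so that the error is beaten by a gain of size $\tfrac{\mu}{C}(1+|\nabla|^m)$. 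Your fixed gain $\theta c_0 I$ cannot beat an error that grows like $|\nabla|^m$.

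Second, the first-variation step with a fixed Hopf barrier does not work in this degenerate setting. Any $\xi$ vanishing on $\partial B$ and positive inside has an interior maximum, where $\nabla\xi=0$ and $\nabla^2\xi\leq 0$, and for the Gaussian barrier the tangential eigenvalues of $\nabla^2\xi$ are negative throughout; since only the matrix inequality (adding a \emph{nonnegative} matrix, \eqref{Eq:UCondPos}) is available — there is no uniform ellipticity to trade the large radial eigenvalue against the negative tangential ones — the bracket cannot be $\geq c_0 I$ (indeed for $L$ independent of $s$, as for the conformal operator, it is $\leq 0$ at the max of $\xi$). Moreover \eqref{Eq:Lcond2} cannot absorb $\nabla_pL\cdot\nabla\xi$: it only controls the specific combination $p\cdot\nabla_pL-L+\theta\Lambda|\nabla_pL|\,I$, which arises when the perturbation's gradient is (up to bounded terms) proportional to $\nabla\psi$ — i.e.\ when one perturbs by a function of $\psi$ itself, as the paper does with $\mu\,(e^{\alpha|x|^2}+e^{-\beta\psi}-\tau)$ in Lemma \ref{Lem:FVSub}. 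For a fixed direction $\nabla\xi$, the term $\nabla_pL\cdot\nabla\xi$ is unbounded in $|p|$ (already linear in $|p|$ for $L(p)=p\otimes p-\tfrac12|p|^2I$) and nothing in your expansion compensates it. It is also telling that your argument never really uses \eqref{Eq:UCond*S}: in the paper that hypothesis is needed because the $\psi$-dependent perturbation produces the multiplicative factor $(1-\mu f'(\psi))$ in front of $F[\psi]$. So the overall architecture (envelope regularization, strict perturbation of the subsolution, Jensen/ABP contact points, contradiction via \eqref{Eq:UCondPos}) matches the paper, but the perturbation must depend on $\psi$ to generate gains of size $|\nabla\psi|^m\,I$ and $\nabla\psi\otimes\nabla\psi$, and the regularization error must be controlled at the contact points; both are missing as stated.
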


Interchanging the role of $\psi$ and $-\psi$ and of $U$ and $\mathcal{S}^{n \times n} \setminus (- \bar U)$ (where $-\bar U = \{-M: M \in \bar U\}$), we see that an analogous result holds if one replaces \eqref{Eq:UCond*S} by
\begin{equation}
A\in U, c\in(1,\infty) \Rightarrow cA\in U,
\label{Eq:UCond*SCat}
\end{equation}
and \eqref{Eq:Lcond2} by: for every $R > 0$ and $\Lambda > 0$, there exist positive constants $\bar \theta, C > 0$ such that, for $0 < \theta \leq \bar\theta$, $x \in \Omega$, $|s| \leq R$ and $p \in \RR^n$,
\begin{align}
&p \cdot \nabla_p L(x, s,p) - L(x, s, p ) 
\nonumber\\
	&\qquad\qquad - \theta\Lambda |\nabla_{p} L(x, s,p)|\,I  + \theta\,I
	 \geq -C p \otimes p + \frac{1}{C}\, |p|^m\,I .
	\label{Eq:Lcond2Cat}
\end{align}
We then obtain an equivalent statement of Theorem \ref{thm:CPNUE}:

\begin{thm}
Let $\Omega\subset\mathbb{R}^{n}$ ($n \geq 2$) be a non-empty bounded open set, $L: \Omega \times \RR \times \RR^n \rightarrow \Snn$ be locally Lipschitz continuous and satisfy \eqref{Eq:Lcond0}, \eqref{Eq:Lcond1} and \eqref{Eq:Lcond2Cat} for some $m > 1$, $F$ be given by \eqref{Eq:FDef} and $U$ be a non-empty open 
subset of $\Snn$ satisfying \eqref{Eq:UCondPos} and \eqref{Eq:UCond*SCat}. If $w \in LSC(\bar\Omega)$ and $v\in USC(\bar\Omega)$ are respectively a supersolution and a subsolution of \eqref{Eq:FpsiEq} in $\Omega$ and if $w \geq v$ in $\Omega$ and $w > v$ on $\partial\Omega$, then $w > v$ in $\Omega$.
\label{thm:CPNUECat}
\end{thm}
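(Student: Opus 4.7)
The plan is to deduce Theorem \ref{thm:CPNUECat} directly from Theorem \ref{thm:CPNUE} via the involution $\psi \mapsto -\psi$, $U \mapsto \tilde U := \Snn \setminus (-\bar U)$. Given $(F,U,v,w)$ as in Theorem \ref{thm:CPNUECat}, I set $\tilde v := -w \in USC(\bar\Omega)$, $\tilde w := -v \in LSC(\bar\Omega)$, $\tilde L(x,s,p) := -L(x,-s,-p)$, and $\tilde F[\tilde\psi] := \nabla^2 \tilde\psi + \tilde L(\cdot,\tilde\psi,\nabla\tilde\psi)$, so that the key identity $\tilde F[-\psi] \equiv -F[\psi]$ holds. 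Once I verify that $(\tilde F,\tilde U,\tilde v,\tilde w)$ fulfils the hypotheses of Theorem \ref{thm:CPNUE}, that theorem yields $\tilde w > \tilde v$ in $\Omega$, which is just $w > v$ in $\Omega$.

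The first task is to transfer the structural conditions on $(F,U)$. The set $\tilde U$ is open and nonempty (in the setting of interest, $-\bar U$ is a proper subset of $\Snn$). Condition \eqref{Eq:UCondPos} for $\tilde U$ follows from the version of \eqref{Eq:UCondPos} that carries over to $\bar U$ by limits: if $A \in \tilde U$ and $B > 0$ but $-A - B \in \bar U$, picking $M_k \in U$ with $M_k \to -A-B$ gives $M_k + B \in U$ and $M_k + B \to -A$, so $-A \in \bar U$, contradicting $A \in \tilde U$. Condition \eqref{Eq:UCond*S} for $\tilde U$ follows analogously from \eqref{Eq:UCond*SCat} for $U$. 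Local Lipschitz regularity of $\tilde L$ and \eqref{Eq:Lcond0} transfer immediately, while \eqref{Eq:Lcond1} for $\tilde L$ at $s \leq s'$ is simply \eqref{Eq:Lcond1} for $L$ at $-s' \leq -s$. The most delicate check is \eqref{Eq:Lcond2}: with $p' := -p$, the chain rule gives $\nabla_p \tilde L(x,s,p) = (\nabla_p L)(x,-s,p')$ and
\[
p \cdot \nabla_p \tilde L(x,s,p) - \tilde L(x,s,p) = -\bigl[p' \cdot \nabla_p L(x,-s,p') - L(x,-s,p')\bigr],
\]
so \eqref{Eq:Lcond2} for $\tilde L$, after multiplication by $-1$, is exactly \eqref{Eq:Lcond2Cat} for $L$ at $(x,-s,p')$.

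The second task is to transfer the viscosity conditions and the ordering. If $\tilde\varphi \in C^2$ touches $\tilde v = -w$ from above at $x_0$, then $\varphi := -\tilde\varphi$ touches $w$ from below at $x_0$, so $F[\varphi](x_0) \in \Snn \setminus U$; since $\tilde F[\tilde\varphi](x_0) = -F[\varphi](x_0)$, this rearranges to $\tilde F[\tilde\varphi](x_0) \in \Snn \setminus (-U) = \overline{\tilde U}$, and hence $\tilde v$ is a subsolution of $\tilde F[\tilde\psi] \in \partial\tilde U$; the supersolution side is symmetric. The ordering $w \geq v$ in $\Omega$ and $w > v$ on $\partial\Omega$ translates directly to the analogous ordering for $\tilde w, \tilde v$, so Theorem \ref{thm:CPNUE} applies and closes the argument. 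The main obstacle is purely bookkeeping: one must verify the set-theoretic identities $\overline{\tilde U} = \Snn \setminus (-U)$ and $\partial\tilde U = -\partial U$, both of which reduce to $U = \operatorname{int}(\bar U)$; this in turn follows from $U$ open together with \eqref{Eq:UCondPos} by a short approximation (for $M \in \operatorname{int}(\bar U)$, pick $\delta > 0$ with $M - \delta I \in \bar U$, approximate by $M_k \in U$, and apply \eqref{Eq:UCondPos} to $M_k$ and $M - M_k > 0$). Beyond this bookkeeping and the sign-tracking inside \eqref{Eq:Lcond2}, no new analytic input is required.
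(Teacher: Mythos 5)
Your proposal is correct and follows essentially the same route as the paper, which obtains Theorem \ref{thm:CPNUECat} from Theorem \ref{thm:CPNUE} precisely by the substitution $\psi \mapsto -\psi$, $U \mapsto \Snn \setminus (-\bar U)$; you merely spell out the bookkeeping (transfer of \eqref{Eq:UCondPos}, \eqref{Eq:UCond*S}, \eqref{Eq:Lcond0}--\eqref{Eq:Lcond2}, the viscosity conditions, and the identity $U = \operatorname{int}(\bar U)$ needed for $\overline{\tilde U} = \Snn \setminus (-U)$) that the paper leaves implicit. The checks are sound, including the sign-tracking showing \eqref{Eq:Lcond2} for $\tilde L$ is exactly \eqref{Eq:Lcond2Cat} for $L$ at $(x,-s,-p)$.
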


Assuming the correctness of the above theorem for the moment, we proceed with the 
\begin{proof}[Proof of Theorem \ref{thm:CPQuad}]
If $\beta > \beta_0 > 0$, the result is covered by Theorem \ref{thm:CPNUE}. If $\beta < -\beta_0 < 0$, the result is covered by Theorem \ref{thm:CPNUECat}. It remains to consider the case $\beta \equiv 0$ and $\alpha$ is constant. The operator $F$ then takes the form
\[
F[\psi] = \nabla^2 \psi + \alpha\,\nabla\psi \otimes \nabla \psi.
\]
When $\alpha \neq 0$, we note that the functions $\tilde w = \frac{\alpha}{|\alpha|} e^{\alpha w}$ and $\tilde v = \frac{\alpha}{|\alpha|} e^{\alpha v}$ satisfy $\tilde w \in LSC(\bar\Omega)$, $\tilde v\in USC(\bar\Omega)$ and, in view of \eqref{Eq:UCone},
\[
\nabla^2 \tilde w = |\alpha|\,|\tilde w|\,F[w] \in \Snn \setminus U \text{ and } \nabla^2 \tilde v = |\alpha|\,|\tilde v| F[v] \in \bar U.
\]
Therefore, we can assume without loss of generality that $\alpha = 0$, i.e.
\[
F[\psi] = \nabla^2 \psi.
\]

In this case, note that 
\begin{equation}
F[\psi + \mu\,|x|^2]  = F[\psi] + 2\mu\,I.
	\label{Eq:FVab=0}
\end{equation}
An easy adaption of the proof of Theorem \ref{thm:CPNUE} below (but using \eqref{Eq:FVab=0} instead of Lemma \ref{Lem:FVSub}) yields the result.
\end{proof}

We turn now to the proof of Theorem \ref{thm:CPNUE}.


\subsection{Error in regularizations}

The following result is a direct adaption of \cite[Theorem 5.1]{CabreCaffBook} which estimates the error to \eqref{Eq:FpsiEq} when making regularizations by lower and upper envelops.

\begin{prop}\label{key lemma}
Assume $n \geq 2$. Let $\Omega\subset\mathbb{R}^{n}$ be a bounded open set, $U$ be an open subset of $\Snn$ satisfying \eqref{Eq:UCondPos}, $L: \Omega \times \RR \times \RR^n \rightarrow \Snn$ be a locally Lipschitz continuous function satisfying \eqref{Eq:Lcond0} and the second inequality in \eqref{Eq:Lcond1} for some $m \geq 0$, and $F$ be given by \eqref{Eq:FDef}. For any $M > 0$, there exists $a > 0$ such that if $w \in LSC(\Omega)$ is a supersolution of \eqref{Eq:FpsiEq} in $\Omega$ and if $w_\epsilon$ is punctually second order differentiable at a point $x \in \Omega$ and $|w_\epsilon(x)| + |w(x_*)| \leq M$, then
\begin{align*}
&F[w_\epsilon](x) - a|x_* - x|( 1 + \frac{1}{\epsilon}|x_* - x|)\,|\nabla w_\epsilon(x)|^m\,I \in \Snn \setminus U.
\end{align*}
Analogously, if $v \in USC(\Omega)$ is a subsolution of \eqref{Eq:FpsiEq} in $\Omega$, and if $v^\epsilon$ is punctually second order differentiable at a point $x \in \Omega$ and $|v^\epsilon(x)| + |v(x^*)| \leq M$, then
\begin{align*}
&F[v^\epsilon](x) +  a|x^* - x|( 1 + \frac{1}{\epsilon}|x^* - x|)\,|\nabla v_\epsilon(x)|^m\,I \in U.
\end{align*}
\end{prop}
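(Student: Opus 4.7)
The plan is to adapt the classical viscosity regularization argument (as in \cite[Theorem 5.1]{CabreCaffBook}), carefully tracking the error in the shape dictated by \eqref{Eq:Lcond0} and \eqref{Eq:Lcond1}. I will carry out the proof for the lower-envelop assertion; the upper-envelop one follows by symmetric reasoning. Fix a point $x \in \Omega$ at which $w_\epsilon$ is punctually second order differentiable, set $p := \nabla w_\epsilon(x)$, $N := \nabla^2 w_\epsilon(x)$, and let $x_*$ be a minimizer in \eqref{Eq:LowEnvDef}. For each $\eta > 0$, I take the quadratic
\[
\varphi_\eta(z) := w_\epsilon(x) + p\cdot(z-x) + \tfrac12(z-x)^T(N - 2\eta I)(z-x),
\]
so that $w_\epsilon - \varphi_\eta \geq \eta|z-x|^2 + o(|z-x|^2) \geq 0$ near $x$ by punctual differentiability, with equality at $x$. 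Translating, I set $\tilde\varphi_\eta(y) := \varphi_\eta(y - (x_* - x)) - \tfrac{1}{\epsilon}|x_* - x|^2$. Using the defining inequality $w_\epsilon(z) \leq w(y) + \tfrac{1}{\epsilon}|y - z|^2$ with $z := y - (x_* - x)$ near $x$, together with the identity $w_\epsilon(x) = w(x_*) + \tfrac{1}{\epsilon}|x_* - x|^2$, a direct check shows that $\tilde\varphi_\eta$ touches $w$ from below at $x_*$.

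Feeding $\tilde\varphi_\eta$ into the supersolution property of $w$ yields $(N - 2\eta I) + L(x_*, w(x_*), p) \in \Snn \setminus U$. Since $U$ is open, $\Snn \setminus U$ is closed, and sending $\eta \to 0^+$ gives
\[
N + L(x_*, w(x_*), p) \in \Snn \setminus U.
\]
To bridge the gap with $F[w_\epsilon](x) = N + L(x, w_\epsilon(x), p)$, I will estimate the $L$-shift in two pieces: integrating \eqref{Eq:Lcond0} along the segment $[x_*, x]$ gives the operator-norm bound $|L(x, s, p) - L(x_*, s, p)| \leq C|x - x_*||p|^m$ whenever $|s| \leq M$, while the second inequality of \eqref{Eq:Lcond1} applied to the nonnegative increment $w_\epsilon(x) - w(x_*) = \tfrac{1}{\epsilon}|x - x_*|^2$ yields the matrix estimate $L(x_*, w_\epsilon(x), p) - L(x_*, w(x_*), p) \leq C\tfrac{1}{\epsilon}|x - x_*|^2|p|^m I$. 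Summing, there is a constant $a = a(M)$ with
\[
L(x, w_\epsilon(x), p) - L(x_*, w(x_*), p) \leq a|x_* - x|\bigl(1 + \tfrac{1}{\epsilon}|x_* - x|\bigr)|p|^m\,I.
\]
Finally, \eqref{Eq:UCondPos} combined with closedness of $\Snn \setminus U$ implies that $\Snn \setminus U$ is closed under subtraction of positive semi-definite matrices (strictly positive perturbations are the contrapositive of \eqref{Eq:UCondPos}; the degenerate case is recovered by $\delta I$ approximation). Applying this to the difference between the last two displays gives
\[
F[w_\epsilon](x) - a|x_* - x|\bigl(1 + \tfrac{1}{\epsilon}|x_* - x|\bigr)|\nabla w_\epsilon(x)|^m\,I \;\leq\; N + L(x_*, w(x_*), p) \;\in\; \Snn \setminus U,
\]
which is the claim.

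The proof is essentially routine; the delicate point is purely bookkeeping. The Lipschitz-in-$x$ contribution (of size $|x - x_*|$) and the monotonicity-in-$s$ contribution (of size $\tfrac{1}{\epsilon}|x - x_*|^2$) must combine into exactly the form $|x - x_*|\bigl(1 + \tfrac{1}{\epsilon}|x - x_*|\bigr)|p|^m$ stated in the proposition, since this is the shape that becomes controllable in later applications via properties \eqref{UpLowPropiii} and \eqref{UpLowPropv} of the envelops. The upper-envelop case is handled by entirely symmetric reasoning, with the monotonicity direction in \eqref{Eq:Lcond1} aligning with $v(x^*) - v^\epsilon(x) = \tfrac{1}{\epsilon}|x^* - x|^2 \geq 0$.
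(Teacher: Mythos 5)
Your argument is correct and follows essentially the same route as the paper's proof: you build a quadratic from the second-order expansion of $w_\epsilon$ at $x$, translate it so that it touches $w$ from below at $x_*$, invoke the supersolution property there, and then transfer $L(x_*,w(x_*),p)$ to $L(x,w_\epsilon(x),p)$ via \eqref{Eq:Lcond0} and the second inequality of \eqref{Eq:Lcond1}, concluding with \eqref{Eq:UCondPos}. The only difference is cosmetic: your $\eta$-perturbed test function handles the $o(|z|^2)$ error explicitly, whereas the paper absorbs it into the (standard) touching argument for the polynomial $P_\epsilon$.
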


\begin{proof}
We only give the proof of the first assertion. The second assertion can be proved in a similar way.

We have 
\begin{equation}
w_{\epsilon}(x+z)\geq w_{\epsilon}(x)+\nabla w_{\epsilon}(x)\cdot z+\frac{1}{2}z^{T}\nabla^{2}w_{\epsilon}(x)z+o(|z|^{2}),\quad\mbox{ as }z\rightarrow 0.\label{yre2}
\end{equation}
By the definition of $w_{\epsilon}$, we have
\begin{equation*}
w_{\epsilon}(x+z)\leq w(x_{*}+z) +\frac{1}{\epsilon}|x_{*}-x|^{2},\label{yree}
\end{equation*}
and therefore, in view of (\ref{yre2}),
\begin{align*}
w(x_{*}+z)&\geq w_{\epsilon}(x+z) - \frac{1}{\epsilon}|x_{*}-x|^{2}\\
&\geq P_{\epsilon}(x_* + z)+o(|z|^{2}),\quad\mbox{ as }z\rightarrow 0,
\end{align*}
where $P_{\epsilon}$ is a quadratic polynomial with 
\begin{align*}
P_{\epsilon}(x_*)&=w_{\epsilon}(x) -\frac{1}{\epsilon}|x_{*}-x|^{2} = w(x_*), \nonumber \\
\nabla P_{\epsilon}(x_*)&=\nabla w_{\epsilon}(x),\\
\nabla^{2} P_{\epsilon}(x_*)&=\nabla^{2} w_{\epsilon}(x).
\end{align*}
Since $w$ is a supersolution of \eqref{Eq:FpsiEq}, we thus have 
\[
\nabla^2 w_\epsilon(x) + L(x_*, w(x_*),\nabla w_{\epsilon}(x)) = F[P_{\epsilon}](x_*)\in \Snn\setminus U.\label{fanyang}
\]
On the other hand, in view of \eqref{Eq:Lcond0}, \eqref{Eq:Lcond1} and $w(x_*) = w_{\epsilon}(x) -\frac{1}{\epsilon}|x_{*}-x|^{2} \leq w_\epsilon(x)$, 
\[
L(x, w_\epsilon(x),\nabla w_{\epsilon}(x)) - L(x_*, w(x_*),\nabla w_{\epsilon}(x)) \leq C(|x - x_*| + \frac{1}{\epsilon}|x - x_*|^2)|\nabla w_\epsilon(x)|^m\,I.
\]
The conclusion is readily seen thanks to \eqref{Eq:UCondPos}.
\end{proof}


\subsection{First variation of $F[\psi]$}

As mentioned in the introduction, we would like to perturb a given function $\psi$ to another function $\tilde\psi$ in such a way that $F[\tilde\psi] $ is bounded from above/below by a multiple of $F[\psi]$ and with a favorable excess term. This will be important in controlling error accrued in other parts of the proof of Theorem \ref{thm:CPNUE} (e.g. in regularizations).

\begin{lem}\label{Lem:FVSub}
Let $\Omega$ be an open bounded subset of $\RR^n$, $n \geq 2$, $L: \Omega \times \RR \times \RR^n \rightarrow \Snn$ be a locally Lipschitz continuous function satisfying \eqref{Eq:Lcond1} and \eqref{Eq:Lcond2} for some $m > 1$, $F$ be given by \eqref{Eq:FDef}, and $\psi: \Omega \rightarrow \RR \cup \{\pm \infty\}$. For any $M > 0$, there exist positive constants $\mu_0, \alpha, \beta, \delta, K_0 > 0$, depending only on an upper bound of $M$, $L$ and $\Omega$, such that 
$$
\mu_0\,\beta\,\sup_\Omega e^{-\beta\psi} \leq \frac{1}{2},
$$
and, for any $0 < \mu < \mu_0$, $\tau \in \RR$, the function $\tilde\psi_{\mu,\tau} = \psi + \mu \,(e^{\alpha|x|^2} + e^{-\beta\psi} - \tau)$ satisfies
\[
F[\tilde\psi_\mu] \geq (1 - \mu\,\beta\,e^{-\beta\psi})F[\psi]
  	+ \mu\,K_0[(1 + |\nabla\psi|^m)\,I + \nabla \psi \otimes \nabla\psi]
\]
in the set
\begin{align}
\Omega^{M,\delta} 
	&:= \Big\{x \in \Omega: \text{$\psi$ is punctually second order differentiable at $x$},\nonumber \\
		&\qquad\qquad |\psi(x)| \leq M, \text{ and }  e^{\alpha|x|^2} + e^{-\beta\psi(x)} - \tau \geq -\delta\Big\}.\label{Eq:OmegaMDef}
\end{align}
\end{lem}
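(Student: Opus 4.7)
The plan is to compute $F[\tilde\psi_{\mu,\tau}]$ at points of punctual second-order differentiability of $\psi$ and compare it with $\tilde\lambda F[\psi]$, where $\tilde\lambda:=1-\mu\beta e^{-\beta\psi}\in[1/2,1)$ by the smallness assumption on $\mu_0\beta\sup e^{-\beta\psi}$. Direct differentiation of $\eta:=e^{\alpha|x|^2}+e^{-\beta\psi}-\tau$ gives $\nabla\tilde\psi_{\mu,\tau}=\tilde\lambda\nabla\psi+q$ with $q:=2\mu\alpha xe^{\alpha|x|^2}$, and
\[
\nabla^2\tilde\psi_{\mu,\tau}=\tilde\lambda\nabla^2\psi+\mu Q,\qquad Q:=\beta^2 e^{-\beta\psi}\nabla\psi\otimes\nabla\psi+(2\alpha I+4\alpha^2 x\otimes x)e^{\alpha|x|^2}.
\]
Since $\tau$ enters only as an additive constant, it affects neither $\nabla\tilde\psi_{\mu,\tau}$ nor $\nabla^2\tilde\psi_{\mu,\tau}$, and $F[\tilde\psi_{\mu,\tau}]-\tilde\lambda F[\psi]=\mu Q+D_L$, where $Q\geq 2\alpha I+\beta^2 e^{-\beta M}\nabla\psi\otimes\nabla\psi$ is the manifestly positive ``gain'' and $D_L:=L(x,\tilde\psi_{\mu,\tau},\nabla\tilde\psi_{\mu,\tau})-\tilde\lambda L(x,\psi,\nabla\psi)$ is the ``error'' to be controlled from below.

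I will split $D_L$ into three natural pieces: (i) an $s$-shift $L(x,\tilde\psi_{\mu,\tau},\nabla\tilde\psi_{\mu,\tau})-L(x,\psi,\nabla\tilde\psi_{\mu,\tau})$; (ii) a $p$-shift by $q$: $L(x,\psi,\nabla\tilde\psi_{\mu,\tau})-L(x,\psi,\tilde\lambda\nabla\psi)$; and (iii) a $p$-scaling $L(x,\psi,\tilde\lambda\nabla\psi)-\tilde\lambda L(x,\psi,\nabla\psi)$. Piece (i) is nonnegative on $\{\eta\geq 0\}$ by monotonicity of $L$ in $s$, and at least $-C\delta\mu(1+|\nabla\psi|^m)I$ on $\{-\delta\leq\eta<0\}$ by the Lipschitz bound in \eqref{Eq:Lcond1} (applied on $[-M-\mu_0\delta,M]$, independently of $\tau$). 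Piece (iii) is the decisive ingredient: the inequality \eqref{Eq:Lcond2} at $\theta=0$ applied to $t\nabla\psi$ translates into
\[
\tfrac{d}{dt}\bigl[t^{-1}L(x,\psi,t\nabla\psi)\bigr]\leq C\,\nabla\psi\otimes\nabla\psi-\tfrac{1}{C}t^{m-2}|\nabla\psi|^m I,
\]
and integrating over $[\tilde\lambda,1]$, multiplying by $\tilde\lambda\geq 1/2$, and using $1-\tilde\lambda^{m-1}\gtrsim(m-1)(1-\tilde\lambda)$ (valid since $m>1$ and $\tilde\lambda$ is bounded away from $0$) yields
\[
L(x,\psi,\tilde\lambda\nabla\psi)-\tilde\lambda L(x,\psi,\nabla\psi)\geq -C_1\mu\beta e^{-\beta\psi}\nabla\psi\otimes\nabla\psi+c_1\mu|\nabla\psi|^m I.
\]
This is the ``scaling gain'' $+c_1\mu|\nabla\psi|^m I$ which will produce the $K_0|\nabla\psi|^m I$ term in the conclusion. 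Piece (ii) is the most technical: writing it as $\int_0^1 q\cdot\nabla_p L(x,\psi,\tilde\lambda\nabla\psi+sq)\,ds$ and using the matrix inequality $q\cdot\nabla_p L\geq -|q||\nabla_p L|I$, one applies the full version of \eqref{Eq:Lcond2} with $\theta>0$ and $\Lambda$ fixed in advance to extract a matrix upper bound on $|\nabla_p L|I$; Young's inequality then absorbs the resulting $|\nabla\psi|^{m-1}$ factors into an arbitrarily small coefficient of $|\nabla\psi|^m I$ at the cost of an additive $I$ term and a $\nabla\psi\otimes\nabla\psi$ term.

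Combining the three pieces with the gain $\mu Q$, the desired inequality follows provided the positive contributions dominate the error terms uniformly on $\Omega^{M,\delta}$. I would carry this out by choosing parameters in the order: first fix $\Lambda$ (e.g.\ $\Lambda=1$) so that \eqref{Eq:Lcond2} produces constants $\bar\theta,C$ depending only on $L,M$; then choose $\beta$ large enough that the scaling gain dominates the $\nabla\psi\otimes\nabla\psi$ errors from pieces (ii)--(iii); then choose $\alpha$ large enough that the $2\alpha I$ contribution of $Q$ absorbs the constant error from piece (ii); then $\delta$ and the Young parameter small for pieces (i)--(ii); and finally $\mu_0$ small enough to enforce $\sup|q|\leq\Lambda$ and $\mu_0\beta\sup e^{-\beta\psi}\leq 1/2$. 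I expect the main obstacle to be precisely this parameter chase, in which the constants from \eqref{Eq:Lcond2} implicitly depend on $\Lambda$: a naive choice $\Lambda=\sup|q|$ would be circular, since $\sup|q|$ itself depends on $\alpha$ and $\mu_0$, so $\Lambda$ must be fixed a priori and $\mu_0$ constrained downstream. The $\tau$-independence of the final constants is then automatic, because $\tau$ enters $D_L$ only through piece (i), handled uniformly via monotonicity (or via the Lipschitz estimate on the bounded range $[\psi-\mu\delta,\psi]$).
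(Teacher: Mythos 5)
Much of your setup coincides with the paper's proof: the same perturbation, the same use of \eqref{Eq:Lcond1} on the region $\{-\delta\le\eta<0\}$, and the same mechanism by which the $\beta^{2}e^{-\beta\psi}\,\nabla\psi\otimes\nabla\psi$ term and the large-$\beta$ choice beat the $O(\beta)e^{-\beta\psi}$ errors. The genuine gap is in your piece (ii). You write that one ``applies the full version of \eqref{Eq:Lcond2} with $\theta>0$ \dots to extract a matrix upper bound on $|\nabla_p L|\,I$.'' But \eqref{Eq:Lcond2} never bounds $|\nabla_p L|$ by itself: it only bounds the \emph{combination} $p\cdot\nabla_p L - L + \theta\Lambda|\nabla_p L|\,I$ evaluated at one and the same argument $p$. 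To isolate $|\nabla_p L|$ you would need a lower bound on $p\cdot\nabla_p L - L$, and the hypotheses give only an upper bound (the $\theta=0$ case). In your decomposition the companion term $p\cdot\nabla_p L - L$ lives on the scaling path $t\nabla\psi$, $t\in[\tilde\lambda,1]$ (your piece (iii)), while the $|\nabla_p L|$ factor from the shift is evaluated on the different path $\tilde\lambda\nabla\psi+sq$, so the pairing that \eqref{Eq:Lcond2} requires is broken and piece (ii) cannot be closed from \eqref{Eq:Lcond0}--\eqref{Eq:Lcond2} alone. Concretely, adding to the model operator a radial term $g(|p|)I$ with $g'\le 0$ extremely negative keeps \eqref{Eq:Lcond2} valid (the very negative $p\cdot\nabla_p L - L$ swallows the $\theta\Lambda|\nabla_p L|$ contribution) while $|\nabla_p L|$ grows faster than any power of $|p|$; then $q\cdot\nabla_p L$ can be of size $-|q|\,|\nabla_p L|$ and your Young-inequality absorption into $\epsilon|\nabla\psi|^m I + C_\epsilon I$ fails. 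Your plan does work for the quadratic model of Example \ref{Ex:QuadF}, where $|\nabla_p L|\lesssim |p|+|p|^{m-1}$ explicitly, but that growth bound is not among the hypotheses of the lemma.

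The paper's proof avoids exactly this by \emph{not} separating scaling from shift: it runs the single normalized homotopy $g(t)=L(x,\psi,\nabla\tilde\psi_t)/(1-tf'(\psi))$ along the actual path $\nabla\tilde\psi_t=(1-tf'(\psi))\nabla\psi+2t\alpha x\,e^{\alpha|x|^2}$, so that $\frac{d}{dt}g(t)$ produces, at the single point $p=\nabla\tilde\psi_t$, precisely $L-p\cdot\nabla_p L-\tfrac{C\alpha\varphi}{f'(\psi)}|\nabla_p L|\,I$, to which \eqref{Eq:Lcond2} applies as a package (with $\theta=\alpha\varphi/(8f'(\psi))$, kept below $\bar\theta$ by taking $\alpha$ small, not large). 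The repair of your argument is to merge your pieces (ii) and (iii) into this one-parameter deformation; your treatment of piece (i), your identification of the $Q$-gain, and your observation about fixing $\Lambda$ a priori and constraining $\mu_0$ afterwards then fit into that scheme. One further caution for the write-up: when comparing the $\beta^{2}$ gain against the $C\beta$ error you must keep the common weight $e^{-\beta\psi}$ (as in $\beta(\beta-C)e^{-\beta\psi}\ge\tfrac12\beta^2 e^{-\beta\psi}$); replacing the gain by $\beta^{2}e^{-\beta M}$ and the error by $C\beta e^{+\beta M}$, as your displayed bound for $Q$ invites, cannot be won by taking $\beta$ large.
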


\begin{proof} In the proof, $C$ will denote some large positive constant which may become larger as one moves from lines to lines but depends only on an upper bound for $M$, $L$ and $\Omega$. Eventually, we will choose large $\beta = \beta(C) > 0$, small $\alpha = \alpha(\beta, M, C) > 0$, and finally small $\mu_0 = \mu_0(\alpha, \beta,M, C) > 0$.

We set $\varphi(x) = e^{\alpha|x|^2}$, $f(\psi) = -  e^{-\beta\psi}$ and abbreviate  $\tilde\psi_\mu = \tilde\psi_{\mu,\tau} = \psi + \mu\,(\varphi - f(\psi) - \tau)$. Note that $f'(\psi) > 0$.

We assume in the sequel that $\alpha < 1$, $\delta < 1$ and
\begin{align}
\mu_0 \sup_\Omega [1 + f'(\psi)] &\leq \frac{1}{C} < \frac{1}{2}.
	\label{Eq:mu0Req1}
\end{align}

The following computation is done at a point in $\Omega^{M,\delta}$. We have
\begin{align*}
F[\tilde\psi_\mu]
	&\geq (1 - \mu\,f'(\psi)) F[\psi] - \mu\,f''(\psi) \nabla\psi \otimes \nabla\psi
		+ 2\mu \alpha\,\varphi\, I
		\nonumber\\
		&\qquad\qquad + L(x,\tilde\psi_\mu,\nabla \tilde\psi_\mu) - (1 - \mu\,f'(\psi))  L(x,\psi,\nabla\psi).
\end{align*}
Noting that $\varphi - f(\psi) - \tau \geq -\delta$ in $\Omega^{M,\delta}$, we deduce from \eqref{Eq:Lcond1} and \eqref{Eq:mu0Req1} that
\[
L(x,\tilde\psi_\mu,\nabla \tilde\psi_\mu) \geq L(x,\psi,\nabla \tilde\psi_\mu) - C\,\mu\,\delta\,(|\nabla\psi|^m + \mu^m\,\alpha^m\,\varphi^m)\,I.
\]
Therefore,
\begin{align}
F[\tilde\psi_\mu]
	&\geq (1 - \mu\,f'(\psi)) F[\psi] - \mu\,f''(\psi) \nabla\psi \otimes \nabla\psi\nonumber\\
		&\qquad\qquad + 2\mu \alpha\,(1 - C\delta\mu^m\alpha^{m-1}\varphi^{m-1})\varphi\, I - C\,\mu\,\delta|\nabla\psi|^m\,I
		\nonumber\\
		&\qquad\qquad + L(x,\psi,\nabla \tilde\psi_\mu) - (1 - \mu\,f'(\psi))  L(x,\psi,\nabla\psi).
		\label{Eq:FtpsiEst1}
\end{align}

We proceed to estimate $L(x,\psi,\nabla \tilde\psi_\mu) - (1 - \mu\,f'(\psi))  L(x,\psi,\nabla\psi)$. For $0 \leq t \leq \mu$, let
\[
g(t) = \frac{L(x,\psi,\nabla \tilde\psi_t)}{1 - t f'(\psi)}. 
\]
We have
\begin{align*}
\frac{d}{dt} g(t)
	&\geq \frac{f'(\psi)}{(1 - t f'(\psi))^2} \Big[L(x,\psi,\nabla \tilde\psi_t) - \nabla\tilde\psi_t \cdot \nabla_p L(x,\psi,\nabla\tilde\psi_t)\\
		&\qquad\qquad - \frac{C\alpha \varphi}{f'(\psi)} |\nabla_p L(x,\psi,\nabla\tilde\psi_t)|\,I \Big].
\end{align*}
Thus, in view of \eqref{Eq:Lcond2} and \eqref{Eq:mu0Req1}, if $\alpha, \beta$ and $\delta$ satisfy
\begin{align}
\alpha \sup_\Omega \varphi[\frac{1}{f'(\psi)} + 1]\,&\leq \frac{1}{C},\label{Eq:alphaReq}
\end{align}
then, with $\Lambda = 8C$ and $\theta = \frac{\alpha \varphi}{8f'(\psi)}$ in \eqref{Eq:Lcond2},
\begin{align*}
\frac{d}{dt} g(t)
	&\geq f'(\psi) \Big[-C \nabla\tilde\psi_t \otimes \nabla \tilde\psi_t + \frac{1}{C} |\nabla\tilde\psi_t|^m\,I\Big] -  \frac{1}{2}\alpha\,\varphi \,I\\
	&\geq f'(\psi) \Big[-C \nabla\psi \otimes \nabla \psi + \frac{1}{C} |\nabla\psi|^m\,I\Big] -  \alpha\,\varphi \,I.
\end{align*}
This implies
\begin{align}
&L(x,\psi,\nabla \tilde\psi_\mu) - (1 - \mu\,f'(\psi))  L(x,\psi,\nabla\psi)\nonumber\\
	&\qquad\qquad= (1 - \mu\,f'(\psi))[ g(\mu) - g(0)] \nonumber\\
	&\qquad\qquad\geq \mu\,f'(\psi) \Big[-C \nabla\psi \otimes \nabla \psi + \frac{1}{C} |\nabla\psi|^m\,I\Big] - \mu\,\alpha\,\varphi\,I.
	\label{Eq:FtpsiEst2}
\end{align}

Combining \eqref{Eq:FtpsiEst1} and \eqref{Eq:FtpsiEst2} and using \eqref{Eq:alphaReq}, we obtain
\begin{align}
F[\tilde\psi_\mu]
	&\geq (1 - \mu\,f'(\psi)) F[\psi]  + \mu\, \alpha\,\varphi I
		+  \frac{1}{C}\, \mu\,(f'(\psi) - C\delta)  |\nabla\psi|^m\,I
		\nonumber\\
		&\qquad\qquad + \mu\,\big[-f''(\psi)  - Cf'(\psi)\big]\,\nabla\psi \otimes \nabla\psi 	.
		\label{Eq:FtpsiEst2Y}
\end{align}

We now fix $C$ and proceed with the choice of $\alpha, \beta, \delta$ and $\mu_0$. First, choosing $\beta \geq 2C$ and recalling the definition of $f$, we have
\[
-f''(\psi)  - Cf'(\psi) = \beta(\beta - C)e^{-\beta \psi} \geq \frac{1}{2}\beta\,f'(\psi).
\]
Next, choose $\alpha$ such that \eqref{Eq:alphaReq} is satisfied and choose $\delta$ such that $f'(\psi) - C\delta \geq \frac{1}{2}f'(\psi)$. Finally, choose $\mu_0$ such that \eqref{Eq:mu0Req1} holds. We hence obtain from \eqref{Eq:FtpsiEst2Y}  that
\[
F[\tilde\psi]
	\geq (1 - \mu\,f'(\psi)) F[\psi]
		+ \mu  \,\alpha\,\varphi\, I + \frac{1}{C}\, \mu\,f'(\psi)  |\nabla\psi|^m\,I + \frac{1}{2}\beta\,\mu\,f'(\psi) \nabla \psi \otimes \nabla \psi.
\]
This completes the proof.
\end{proof}

\begin{lem}
Let $\Omega$ be an open bounded subset of $\RR^n$, $n \geq 2$, $L: \Omega \times \RR \times \RR^n \rightarrow \Snn$ be a locally Lipschitz continuous function satisfying \eqref{Eq:Lcond1} and \eqref{Eq:Lcond2} for some $m > 1$, $F$ be given by \eqref{Eq:FDef}, and $\psi: \Omega \rightarrow \RR \cup \{\pm \infty\}$. There exist positive constants $\mu_0, \alpha, \beta, \delta, K_0 > 0$, depending only on an upper bound of $\sup_\Omega |\psi|$, $L$ and $\Omega$, such that, for any $0 < \mu < \mu_0$, $\tau \in \RR$, the function $\hat\psi_{\mu,\tau} = \psi - \mu \,(e^{\alpha|x|^2} + e^{-\beta\psi} - \tau)$ satisfies
\[
F[\hat\psi_\mu] \leq (1 + \mu\,\beta\,e^{-\beta\psi})F[\psi]
  	- \mu\,K_0[(1 + |\nabla \psi|^m)\,I + \nabla\psi \otimes \nabla \psi]
\]
in the set $\Omega^{M,\delta}$ defined by \eqref{Eq:OmegaMDef}.
\end{lem}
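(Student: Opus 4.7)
The plan is to mirror the proof of Lemma~\ref{Lem:FVSub} with the sign of the perturbation reversed. Keeping the notation $\varphi(x)=e^{\alpha|x|^{2}}$ and $f(\psi)=-e^{-\beta\psi}$, I write $\hat\psi_{\mu,\tau}=\psi-\mu(\varphi-f(\psi)-\tau)$, so that
\[
\nabla\hat\psi_{\mu} = (1+\mu f'(\psi))\nabla\psi - \mu\nabla\varphi, \qquad \nabla^{2}\hat\psi_{\mu} = (1+\mu f'(\psi))\nabla^{2}\psi + \mu f''(\psi)\nabla\psi\otimes\nabla\psi - \mu\nabla^{2}\varphi.
\]
Using $\nabla^{2}\varphi\geq 2\alpha\varphi\,I$ (with the sign flipped compared to Lemma~\ref{Lem:FVSub}), I arrive at the sign-flipped analogue of the starting expansion,
\[
F[\hat\psi_{\mu}] \leq (1+\mu f'(\psi))F[\psi] + \mu f''(\psi)\nabla\psi\otimes\nabla\psi - 2\mu\alpha\varphi\,I + \Delta,
\]
where $\Delta := L(x,\hat\psi_{\mu},\nabla\hat\psi_{\mu}) - (1+\mu f'(\psi))L(x,\psi,\nabla\psi)$ is the $L$-error to be bounded from \emph{above}.

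To bound $\Delta$ I proceed in two steps, again paralleling Lemma~\ref{Lem:FVSub}. Since $\hat\psi_{\mu}\leq\psi+\mu\delta$ on $\Omega^{M,\delta}$, the monotonicity estimate \eqref{Eq:Lcond1} yields $L(x,\hat\psi_{\mu},\nabla\hat\psi_{\mu}) \leq L(x,\psi,\nabla\hat\psi_{\mu}) + C\mu\delta(|\nabla\psi|^{m}+\mu^{m}\alpha^{m}\varphi^{m})\,I$. It remains to control $L(x,\psi,\nabla\hat\psi_{\mu}) - (1+\mu f'(\psi))L(x,\psi,\nabla\psi) = (1+\mu f'(\psi))[g(\mu)-g(0)]$ with $g(t) := L(x,\psi,\nabla\hat\psi_{t})/(1+tf'(\psi))$. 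Using $\tfrac{d}{dt}\nabla\hat\psi_{t} = f'(\psi)\nabla\psi-\nabla\varphi$ together with the identity $(1+tf'(\psi))\nabla\psi=\nabla\hat\psi_{t}+t\nabla\varphi$, a short computation gives
\[
g'(t) = \frac{f'(\psi)}{(1+tf'(\psi))^{2}}\Big[\nabla_{p}L\cdot\nabla\hat\psi_{t} - L(x,\psi,\nabla\hat\psi_{t}) - \tfrac{\nabla_{p}L\cdot\nabla\varphi}{f'(\psi)}\Big].
\]
I then estimate $-\nabla_{p}L\cdot\nabla\varphi/f'(\psi) \leq (|\nabla\varphi|/f'(\psi))|\nabla_{p}L|\,I$ and invoke \eqref{Eq:Lcond2} with $p=\nabla\hat\psi_{t}$ and $\theta=|\nabla\varphi|/(\Lambda f'(\psi))$ for a fixed large $\Lambda$ depending only on $L$; this produces, up to lower-order terms, $g'(t) \leq f'(\psi)[C\nabla\psi\otimes\nabla\psi - \tfrac{1}{C}|\nabla\psi|^{m}\,I] + C\alpha\varphi\,I$.

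Integrating $g'$ on $[0,\mu]$, substituting back, and collecting terms yields (again mirroring Lemma~\ref{Lem:FVSub}) the required upper bound provided the constants are locked in the following order: first fix the structural constant $C$; then choose $\beta\geq 2C$, so that $f''(\psi)+Cf'(\psi) = -\beta(\beta-C)e^{-\beta\psi}\leq -\tfrac{1}{2}\beta f'(\psi)$ supplies the $-\mu K_{0}\nabla\psi\otimes\nabla\psi$ contribution; then take $\alpha$ small enough that the $-2\mu\alpha\varphi\,I$ term dominates the $C\mu\alpha\varphi\,I$ error and that $\theta\leq\bar\theta$; then take $\delta$ small enough that the $|\nabla\psi|^{m}I$ gain survives the $C\mu\delta$ loss; and finally take $\mu_{0}$ small. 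The main obstacle, identical in spirit to that of Lemma~\ref{Lem:FVSub}, is this interlocked choice of constants, and in particular the verification that $\theta=|\nabla\varphi|/(\Lambda f'(\psi))$ stays inside the admissible window $[0,\bar\theta]$ while being large enough to absorb $\nabla_{p}L\cdot\nabla\varphi/f'(\psi)$; quantitatively this hinges on the lower bound $f'(\psi) = \beta e^{-\beta\psi}\geq \beta e^{-\beta\sup_{\Omega}|\psi|}$, which is precisely why the constants are allowed to depend on an upper bound of $\sup_{\Omega}|\psi|$.
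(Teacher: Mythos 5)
Your proposal follows exactly the route the paper intends: the paper omits this proof, stating it is "similar to that of Lemma \ref{Lem:FVSub}", and your sign-reversed mirroring of that argument (perturb by $-\mu(\varphi - f(\psi)-\tau)$, use $-\nabla^2\varphi\leq -2\alpha\varphi I$, bound the $L$-error via the quotient $g(t)=L(x,\psi,\nabla\hat\psi_t)/(1+tf'(\psi))$ and condition \eqref{Eq:Lcond2}, then lock in $\beta\geq 2C$, $\alpha$, $\delta$, $\mu_0$ in that order) is the correct adaptation, and the computations you display are right.

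One point in your closing paragraph is misstated and, taken literally, would fail: you propose to "take $\alpha$ small enough that the $-2\mu\alpha\varphi\,I$ term dominates the $C\mu\alpha\varphi\,I$ error." Both terms scale linearly in $\alpha$, so shrinking $\alpha$ cannot change which dominates. The actual mechanism is already contained in your choice of $\theta$: with $\theta=|\nabla\varphi|/(\Lambda f'(\psi))$ the error produced by the $+\theta\,I$ term in \eqref{Eq:Lcond2} is at most $f'(\psi)\theta=|\nabla\varphi|/\Lambda\leq C_\Omega\,\alpha\varphi/\Lambda$, so it is beaten by $2\alpha\varphi$ once $\Lambda\geq 2C_\Omega$ (equivalently, the paper's device $\Lambda=8C$, $\theta=\alpha\varphi/(8f')$, which makes the error $\alpha\varphi/8$); the remaining errors from replacing $\nabla\hat\psi_t$ by $\nabla\psi$ carry extra powers of $\mu$ (or factors $\mu^m\alpha^{m-1}\varphi^{m-1}$) and are absorbed by the smallness of $\mu_0$, not of $\alpha$. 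Note also that $\Lambda$ must then be allowed to depend on $\Omega$ (through $\sup_\Omega|x|$, since $|\nabla\varphi|=2\alpha|x|\varphi$), not only on $L$; the role of small $\alpha$ is solely to satisfy the constraint $\theta\leq\bar\theta$ and the analogue of \eqref{Eq:alphaReq}. With this bookkeeping corrected, your argument is complete and matches the paper's.
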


\begin{proof} The proof is similar to that of Lemma \ref{Lem:FVSub} and is omitted.
\end{proof}


\subsection{Proof of Theorem \ref{thm:CPNUE}}

Arguing by contradiction, we suppose that there exists $\gamma>0$ such that 
\begin{equation*}
\max\limits_{\bar\Omega}(v-w) = 0\quad  \text{ and }\quad (v-w)(x)\leq-\gamma,\quad\forall x\in\overline{\Omega\setminus \Omega_{\gamma}}\label{lpl}
\end{equation*} 
where $\Omega_\gamma = \{x \in \Omega: \textrm{dist}(x, \partial\Omega) > \gamma\}$. 

For $\epsilon > 0$, let $v^\epsilon$ and $w_\epsilon$ be the $\epsilon$-upper and $\epsilon$-lower envelops of $v$ and $w$ respectively (see Section \ref{Sec:Prelim}). We note that
\[
v \leq v^\epsilon \leq \max_{\bar\Omega} v < +\infty \text{ and } w \geq w_\epsilon \geq \min_{\bar\Omega} w > -\infty. 
\]

In the sequel, we use $C$ to denote some positive constant which depends on $\max_{\bar\Omega} v$, $\min_{\bar\Omega} w$, $L$ and $\Omega$ but is always independent of $\epsilon$.

By Lemma \ref{Lem:FVSub}, we can find $\bar \mu > 0$, $\delta > 0$ and a smooth positive function $f: \RR^n \times \RR \rightarrow (0,\infty)$, depending only on $\max_{\bar\Omega} v$, $\min_{\bar\Omega} w$, $L$ and $\Omega$, such that $f$ is decreasing with respect to the $s$-variable, $\bar\mu \sup_\Omega |\partial_s f(\cdot, v^\epsilon)| \leq\frac{1}{2}$ and, for $\mu \in (0,\bar\mu)$, $\tau \in \RR$ and $\tilde v_{\epsilon,\tau} = v^\epsilon + \mu (f(\cdot,v^\epsilon) - \tau)$, there holds
\begin{equation}
F[\tilde v_{\epsilon,\tau}] \geq (1 - \mu|\partial_s f(\cdot, v^\epsilon)|)F[v^\epsilon]
  	+ \frac{\mu}{C}(1 + |\nabla v^\epsilon|^m)\,I
			\label{Eq:Ftw}
\end{equation}
in the set
\begin{multline*}
\tilde\Omega_{\epsilon} := \Big\{x \in \Omega_{\gamma/2}: \text{$v^\epsilon$ is punctually second order differentiable at $x$},\\
	v^\epsilon(x) \geq \min_{\bar\Omega} w- 1 \text{ and } f(x,v^\epsilon(x)) - \tau \geq -\delta \Big\}.
\end{multline*}
Note that $\bar\mu$ and $\delta$ are independent of $\epsilon$. Furthermore, in view of \eqref{Eq:UpLowConv}, there exists $\bar \eta > 0$ independent of $\epsilon$ such that, for all small $\epsilon$ and $\eta \in (0,\bar\eta)$, one can (uniquely) find $\tau = \tau(\epsilon,\eta)$ such that the function $\xi_{\epsilon,\eta} := \tilde v_{\epsilon,\tau} - w_\epsilon$ satisfies
\[
\max_{\bar\Omega} \xi_{\epsilon,\eta} = \eta \text{ and } \xi_{\epsilon,\eta} < -\frac{\gamma}{2} \text{ in }\overline{\Omega\setminus \Omega_{\gamma}}.
\]

Let $\Gamma_{\xi_{\epsilon,\eta}^{+}}$ denote the concave envelope of $\xi_{\epsilon,\eta}^{+}:=\max\{\xi_{\epsilon,\eta},0\}$ on $\bar\Omega$. Then by \eqref{utr}, we have 
\begin{equation*}
\nabla^{2}\xi_{\epsilon,\eta}\geq-\frac{4}{\epsilon}I\quad\mbox{ a.e. in }\Omega_{\gamma}.
\end{equation*}
By \cite[Lemma 3.5]{CabreCaffBook}, we have 
\begin{equation*}
\int_{\{\xi_{\epsilon,\eta}=\Gamma_{\xi_{\epsilon,\eta}^{+}}\}}\mbox{det}(-\nabla^{2}\Gamma_{\xi_{\epsilon,\eta}^{+}})>0,
\end{equation*}
which implies that the Lebesgue measure of $\{\xi_{\epsilon,\eta}=\Gamma_{\xi_{\epsilon,\eta}^{+}}\}$ is positive. Then there exists $x_{\epsilon,\eta}\in\{\xi_{\epsilon,\eta}=\Gamma_{\xi_{\epsilon,\eta}^{+}}\}\cap\Omega_{\gamma}$ such that both of $v^\epsilon$ and $w_\epsilon$ are punctually second order differentiable at $x_{\epsilon,\eta}$, 
\begin{equation}
0<\xi_{\epsilon,\eta}(x_{\epsilon,\eta})\leq\eta,\label{Eq:29Dec16b}
\end{equation}
\begin{equation}
|\nabla\xi_{\epsilon,\eta}(x_{\epsilon,\eta})| = |\nabla \tilde v_{\epsilon,\tau}(x_{\epsilon,\eta})- \nabla w_{\epsilon}(x_{\epsilon,\eta})| \leq C\eta,\label{Eq:29Dec16c}
\end{equation}
and
\begin{equation}
\nabla^{2}\xi_{\epsilon,\eta}(x_{\epsilon,\eta})=\nabla^2 \tilde v_{\epsilon,\tau}(x_{\epsilon,\eta})- \nabla^2 w_{\epsilon}(x_{\epsilon,\eta})\leq 0.\label{Eq:29Dec162m}
\end{equation}

From \eqref{Eq:29Dec16b} and the definition of $\tilde v_{\epsilon,\tau}$, we have
\begin{equation}
	f(x_{\epsilon,\eta},v^\epsilon(x_{\epsilon,\eta})) - \tau 
	> \frac{1}{\mu}(w_\epsilon(x_{\epsilon,\eta}) - v^\epsilon(x_{\epsilon,\eta})).
	\label{Eq:f-tau->0}
\end{equation}
Note that, as $w \geq v$ in $\Omega$, Lemma \ref{lem:EquiSC} implies that
\[
\liminf_{\epsilon \rightarrow 0, \eta \rightarrow 0}[w_\epsilon(x_{\epsilon,\eta}) - v^\epsilon(x_{\epsilon,\eta})] \geq 0.
\]
Hence, by shrinking $\mu$ and $\bar\eta$ if necessary, we may assume for all small $\epsilon$ that
\[
f(x_{\epsilon,\eta},v^\epsilon(x_{\epsilon,\eta})) - \tau \geq -\delta, \qquad v^\epsilon(x_{\epsilon,\eta}) \geq \min_{\bar\Omega} w - 1, \quad \text{ and } w_\epsilon(x_{\epsilon,\eta}) \leq \max_{\bar\Omega} v + 1.
\]
We deduce that $x_{\epsilon,\eta} \in \tilde\Omega_{\epsilon,\delta}$ and thus obtain from \eqref{Eq:Ftw} that
\begin{equation}
F[\tilde v_{\epsilon,\tau}](x_{\epsilon,\eta}) \geq (1 - \mu|\partial_s f(x_{\epsilon,\eta}, v^\epsilon(x_{\epsilon,\eta}))|)F[v^\epsilon](x_{\epsilon,\eta})
  	+ \frac{\mu}{C}(1 + |\nabla v^\epsilon(x_{\epsilon,\eta})|^m)\,I.
		\label{Eq:FtwInAction}
\end{equation}

Next, the proof of \eqref{Eq:UpLowGradEst} implies that, for any unit vector $e$,
\[
\partial_e v^{\epsilon}(x_{\epsilon,\eta}) \geq - \frac{C}{\sqrt{\epsilon}} \text{ and } \partial_e w_{\epsilon}(x_{\epsilon,\eta}) \leq \frac{C}{\sqrt{\epsilon}}.
\]
This together with \eqref{Eq:29Dec16c} implies that, for all sufficiently small $\eta$,
\[
|\nabla \tilde v_{\epsilon,\tau}(x_{\epsilon,\eta})| + |\nabla w_{\epsilon}(x_{\epsilon,\eta})| \leq \frac{C}{\sqrt{\epsilon}}.
\]
Thus, by the local Lipschitz regularity of $L$, 
\[
L(x_{\epsilon,\eta}, w_\epsilon(x_{\epsilon,\eta}),\nabla w_{\epsilon}(x_{\epsilon,\eta})) - L(x_{\epsilon,\eta}, \tilde v_{\epsilon,\tau}(x_{\epsilon,\eta}), \nabla \tilde v_{\epsilon}(x_{\epsilon,\eta}))
	\geq -C(\epsilon)\eta\,I.
\]
This together with \eqref{Eq:29Dec162m} implies that
\begin{equation}
F[w_{\epsilon}](x_{\epsilon,\eta}) \geq F[\tilde v_{\epsilon,\tau}](x_{\epsilon,\eta}) - C(\epsilon)\,\eta\,I.
	\label{Eq:CPFwtv>}
\end{equation}
Recalling \eqref{Eq:FtwInAction}, we can find $\hat\eta = \hat\eta(\epsilon)$ such that, for $0 < \eta < \hat \eta(\epsilon)$, there holds
\begin{equation}
F[w_{\epsilon}](x_{\epsilon,\eta}) \geq (1 - \mu|\partial_s f(x_{\epsilon,\eta}, v^\epsilon(x_{\epsilon,\eta}))|)F[v^\epsilon](x_{\epsilon,\eta})
  	+ \frac{\mu}{C}(1 + |\nabla v^\epsilon(x_{\epsilon,\eta})|^m)\,I.
		\label{Eq:Fweve>}
\end{equation}

We next claim that 
\begin{equation}
\liminf_{\epsilon \rightarrow 0, \eta \rightarrow 0} \frac{1}{\epsilon}\Big[|(x_{\epsilon,\eta})_* - x_{\epsilon,\eta}|^2 + |(x_{\epsilon,\eta})^* - x_{\epsilon,\eta}|^2\Big] \leq C\mu^2.
	\label{Eq:30Rep}
\end{equation}
Assuming this claim for now, we use Proposition \ref{key lemma} to find $a > 0$ independent of $\epsilon$ and $\eta$ such that one has, in $\Omega_\gamma$,
\begin{align}
F[w_{\epsilon}](x_{\epsilon,\eta})- a|(x_{\epsilon,\eta})_* - x_{\epsilon,\eta}|(1+\frac{1}{\epsilon}|(x_{\epsilon,\eta})_* - x_{\epsilon,\eta}|)\, |\nabla w_\epsilon(x_{\epsilon,\eta})|^m\,I
	&\in \Snn\setminus U,
	\label{Eq:Fwe}\\
F[v^{\epsilon}](x_{\epsilon,\eta}) + a|(x_{\epsilon,\eta})^* - x_{\epsilon,\eta}|(1 + \frac{1}{\epsilon}|(x_{\epsilon,\eta})^* - x_{\epsilon,\eta}|)\, |\nabla v^\epsilon(x_{\epsilon,\eta})|^m\,I
	&\in \overline{U},
	\label{Eq:Fve}
\end{align}
where $x_*$ and $x^*$ are as in Section \ref{Sec:Prelim}. The relations \eqref{Eq:Fweve>}, \eqref{Eq:Fwe} and \eqref{Eq:Fve} amount to a contradiction for sufficiently small $\mu$ thanks to \eqref{Eq:UCondPos} and \eqref{Eq:UCond*S}. Therefore, to conclude the proof it suffices to prove the claim \eqref{Eq:30Rep}.

Pick some $\eta(\epsilon) < \hat\eta(\epsilon)$ such that $\eta(\epsilon) \rightarrow 0$ as $\epsilon \rightarrow 0$. Pick a sequence $\epsilon_m \rightarrow 0$ such that, for $x_m := x_{\epsilon_m,\eta(\epsilon_m)}$, the sequence $\frac{1}{\epsilon_m}[|(x_m)^* - x_m|^2 + |(x_m)_* - x_m|^2]$ converges to a limit which we will show to be no larger than $C\mu^2$. We will abbreviate $\tau_m := \tau(\epsilon_m, \eta(\epsilon_m))$, $v^m = v^{\epsilon_m}$, $w_m = w_{\epsilon_m}$. Without loss of generality, we may also assume that $x_m \rightarrow x_0 \in \Omega$, $f(x_m, v^m(x_m)) \rightarrow f_0$ and $\tau_m \rightarrow \tau_0$. 

As $\max_{\bar\Omega} \xi_{\epsilon,\eta} = \eta$, we have in view of \eqref{Eq:UpLowConv} that
\begin{multline}
v(x_0) - w(x_0) + \mu(f(x_0, v(x_0)) -\tau_0)  \\
	 = \lim_{m \rightarrow \infty} \big\{v^{m}(x_0) - w_{m}(x_0) + \mu(f(x_0, v^{m}(x_0)) -\tau_m) \big\} \leq 0.
	 \label{Eq:vwmftLim}
\end{multline}
On the other hand, by \eqref{Eq:29Dec16b} and the fact that $f$ is decreasing in $s$, we have
\begin{align*}
f(x_0, \limsup_{m \rightarrow \infty} v^m(x_m))
	&\leq  f_0 
		= \lim_{m \rightarrow \infty} f(x_{m}, v^{m}(x_m))\\
	& \leq \limsup_{m \rightarrow \infty}  f(x_m, w_m(x_m) - \mu(f(x_m, v^m(x_m)) - \tau_m))\\
	& \leq   f(x_0, \liminf_{m \rightarrow \infty} w_m(x_m) - \mu(f_0 - \tau_0)),
\end{align*}
which implies, in view of Lemma \ref{lem:EquiSC} and the fact that $w \geq v$, that 
\[
f(x_0, w(x_0)) 
	\leq f(x_0, v(x_0)) 
	\leq f_0
	 \leq   f(x_0, w(x_0) - \mu(f_0 - \tau_0)),
\]
which further implies that
\[
0 \leq f_0 - f(x_0, v(x_0)) \leq C\mu.
\]
Together with \eqref{Eq:vwmftLim}, this implies that
\[
v(x_0) - w(x_0) + \mu(f_0 -\tau_0)  \leq C\mu^2.
\]

We are now ready to wrap up the argument. As $(x_\epsilon)^* - x_\epsilon \rightarrow 0$ and $(x_\epsilon)_* - x_\epsilon \rightarrow 0$ as $\epsilon \rightarrow 0$, we have $(x_m)_* \rightarrow x_0$ and $(x_m)^* \rightarrow x_0$. As $v$ is upper semi-continuous and $w$ is lower semi-continuous, we have
\[
\limsup_{m \rightarrow \infty} v((x_m)^*) \leq v(x_0) \text{ and } \liminf_{m \rightarrow \infty} w((x_m)_*) \geq w(x_0).
\]
Thus, by the left half of \eqref{Eq:29Dec16b},
\begin{align*}
0 
	&\leq \limsup_{m \rightarrow \infty} \frac{1}{\epsilon_m}[|(x_{\epsilon_m})^* - x_{\epsilon_m}|^2 + |(x_{\epsilon_m})_* - x_{\epsilon_m}|^2]\\
	&\leq \limsup_{m \rightarrow \infty} \big\{v((x_{\epsilon_m})^*) - w((x_{\epsilon_m})_*) + \mu(f(x_{\epsilon_m},v^{\epsilon_m}(x_{\epsilon_m})) - \tau({\epsilon_m},\eta({\epsilon_m}))]\big\}\\
	&\leq v(x_0) - w(x_0) + \mu(f_0 - \tau_0) \leq C\mu^2.
\end{align*}
We have proved \eqref{Eq:30Rep}, and thus concluded the proof.
\hfill$\Box$

\subsection{Proof of Corollary \ref{cor:CPSing}}

We will use a result from \cite{CafLiNir11}.

\begin{thm}[\cite{CafLiNir11}]\label{thm:CLNRemSing}
Let $n \geq 1$, $\Omega \subset \RR^n$ be a non-empty open set,
 and let $a,b \in C^0(\Omega \times \RR \times \RR^n)$ satisfy
$$a(x,z,p) \geq 0 \text{ for all } x \in \Omega, z \in \RR, p \in \RR^n,$$
 and $U \subset \Snn$ be a non-empty open set satisfying \eqref{Eq:UCondPos}. If $u \in LSC(\bar \Omega)$ satisfies
 \[
 \Delta u \leq C  \text{ in } \Omega \text{ in the viscosity sense}, 
 \]
and, for some subset $E$ of $\Omega$ of zero Lebesgue measure,
\[
a(x,u,Du)\nabla^2 u + b(x,u,\nabla u) \in \Snn \setminus U \text{ in } \Omega \setminus E \text{ in the viscosity sense},
\]
then
\[
a(x,u,Du)\nabla^2 u + b(x,u,\nabla u) \in \Snn \setminus U \text{ in } \Omega \text{ in the viscosity sense}.
\]
\end{thm}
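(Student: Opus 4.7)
The goal is to verify, at each $x_0 \in E$, the viscosity supersolution condition
$a\bigl(x_0,u(x_0),\nabla\varphi(x_0)\bigr)\nabla^2\varphi(x_0) + b\bigl(x_0,u(x_0),\nabla\varphi(x_0)\bigr) \in \Snn\setminus U$
for every $C^2$ test function $\varphi$ with $(u-\varphi)(x_0)=0$ and $u \geq \varphi$ near $x_0$. By replacing $\varphi$ with $\varphi - \eta|x-x_0|^2$ and letting $\eta \to 0$ at the end (using the continuity of $a,b$ and the closedness of $\Snn\setminus U$), we may assume the touching is strictly quadratic: $(u-\varphi)(x) \geq \eta|x-x_0|^2$ on some closed ball $\bar B_r(x_0) \subset \Omega$.

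The main step is a Jensen-type perturbation. For each $p \in B_{\eta r}(0)$, the LSC function $g_p(x) := u(x) - \varphi(x) - p\cdot(x-x_0)$ satisfies $g_p(x_0)=0$ and $g_p \geq \eta r^2 - |p|r > 0$ on $\partial B_r(x_0)$, so its infimum over $\bar B_r(x_0)$ is attained at some interior point $x_p$. I would then invoke a Jensen--ABP measure estimate: for small $\delta > 0$, the contact-set image $\{x_p : |p|<\delta\}$ has positive Lebesgue measure, with a quantitative lower bound in terms of $\eta, r, \delta$. This is precisely where the hypothesis $\Delta u \leq C$ is used: applied to $u - \varphi$, which satisfies $\Delta(u-\varphi) \leq C - \inf_{B_r}\Delta\varphi$ in the viscosity sense, the Alexandrov--Bakelman--Pucci inequality delivers the desired Lebesgue density. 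Operationally, I would regularize $u$ by the lower envelope $u_\varepsilon$ from Section \ref{Sec:Prelim} (continuous, semi-concave with $\nabla^2 u_\varepsilon \leq (2/\varepsilon)I$, twice punctually differentiable a.e.), apply Jensen's lemma uniformly in $\varepsilon$ to $u_\varepsilon - \varphi$, and pass to the limit $\varepsilon \to 0$. Since $E$ has zero Lebesgue measure, this produces a sequence $p_k \to 0$ with $x_{p_k} \in B_r(x_0) \setminus E$.

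At each such $x_{p_k}$, the $C^2$ function $\varphi + p_k\cdot(x-x_0) + g_{p_k}(x_{p_k})$ touches $u$ from below at $x_{p_k}$, so the hypothesis on $\Omega \setminus E$ yields
\[
a\bigl(x_{p_k},u(x_{p_k}),\nabla\varphi(x_{p_k})+p_k\bigr)\nabla^2\varphi(x_{p_k}) + b\bigl(x_{p_k},u(x_{p_k}),\nabla\varphi(x_{p_k})+p_k\bigr) \in \Snn\setminus U.
\]
Strict quadratic touching forces $x_{p_k} \to x_0$; the minimum property $u(x_{p_k}) \leq \varphi(x_{p_k}) + p_k\cdot(x_{p_k}-x_0)$ together with LSC of $u$ forces $u(x_{p_k}) \to u(x_0)$. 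Continuity of $a,b$ and of $\nabla\varphi,\nabla^2\varphi$, together with closedness of $\Snn\setminus U$, complete the argument (and then $\eta \to 0$ removes the auxiliary quadratic). The main obstacle will be establishing the Jensen--ABP measure estimate in the LSC setting under only a one-sided Laplacian bound: classical Jensen lemma presupposes semi-concavity, so one must combine it with regularization by $u_\varepsilon$ and verify both that (i) the measure bound on $\{x_p : |p|<\delta\}$ is uniform in $\varepsilon$, and (ii) the limiting touching points can be chosen off $E$, not off a potentially larger set (such as $\{x_*(x):x\in E\}$) introduced by the envelope construction. Once this measure estimate is in place, the rest of the argument above is a straightforward passage to the limit.
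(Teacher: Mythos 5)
First, note that the paper does not actually prove this statement: it is quoted from \cite{CafLiNir11}, with the remark that the proof of Theorem 1.2 there ``in effect yields'' it. So the comparison is with that argument, whose strategy (strict touching, linear tilts, inf-convolution, a Jensen/ABP measure estimate made uniform by the hypothesis $\Delta u \leq C$, and $|E|=0$ to find admissible contact points) is indeed the one you outline. Your instinct for where $\Delta u \leq C$ enters is also essentially right: since this inequality holds in all of $\Omega$, it is inherited by the lower envelope $u_\varepsilon$, and at a contact point one has both $\nabla^2(u_\varepsilon-\varphi)\geq 0$ and $\Delta u_\varepsilon \leq C$, whence $0\leq \nabla^2(u_\varepsilon-\varphi)\leq C_1 I$ with $C_1$ depending only on $C$ and $\varphi$; this gives a determinant bound, hence a lower bound on the measure of the contact set, uniform in $\varepsilon$ --- which settles your point (i).

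The genuine gap is your point (ii), which you flag but do not resolve, and as written the argument fails exactly there: Jensen's measure estimate applies to the semi-concave functions $u_\varepsilon-\varphi$, not to the LSC function $u$ itself, and letting $\varepsilon\to 0$ does not produce touching points \emph{of $u$} off $E$ (positive measure of the $\varepsilon$-contact sets is not inherited by any limiting set of genuine test points for $u$, and touching can be lost in the limit). So the sentence ``this produces a sequence $p_k\to 0$ with $x_{p_k}\in B_r(x_0)\setminus E$'' at which $C^2$ functions touch $u$ from below is unjustified. The correct route is not to pass to the limit in the touching points but to apply the equation \emph{at fixed small $\varepsilon$} at the projected points $x_*=x-\frac{\varepsilon}{2}\nabla u_\varepsilon(x)$ associated with contact points $x$ of $u_\varepsilon$: by the inf-convolution transfer (exactly as in Proposition \ref{key lemma}), the second-order expansion of $u_\varepsilon$ at $x$ yields an admissible test for $u$ at $x_*$, provided $x_*\notin E$. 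To guarantee this one must show that the image of the contact set under $x\mapsto x_*$ has positive measure; this follows because $x\mapsto x_*$ is the gradient of the convex function $\frac{1}{2}|x|^2-\frac{\varepsilon}{2}u_\varepsilon(x)$ (convexity from $\nabla^2 u_\varepsilon\leq \frac{2}{\varepsilon}I$), whose Jacobian $I-\frac{\varepsilon}{2}\nabla^2 u_\varepsilon$ is bounded below at contact points by the same two-sided bound $\nabla^2\varphi\leq \nabla^2 u_\varepsilon$, $\Delta u_\varepsilon\leq C$ used above, so the Monge--Amp\`ere/area estimate gives a positive lower bound on the measure of $\{x_*\}$, which therefore cannot be contained in $E$. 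One then concludes by using $a\geq 0$ and \eqref{Eq:UCondPos} to replace $\nabla^2 u_\varepsilon(x)$ by $\nabla^2\varphi(x)$ in the matrix inclusion, and passes to the limit $\varepsilon,p,\eta\to 0$ in that inclusion (with $u(x_*)\to u(x_0)$ via lower semicontinuity and Lemma \ref{lem:EquiSC}), using continuity of $a,b$ and closedness of $\Snn\setminus U$. Without this projection-and-measure step, the proposal is an outline whose decisive step is missing.
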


\begin{rem}
This result was not stated as above in \cite{CafLiNir11}. However, the proof of \cite[Theorem 1.2]{CafLiNir11} in effect yields the above result.
\end{rem}

\begin{proof}[Proof of Corollary \ref{cor:CPSing}]
Note that constant functions are solutions of \eqref{Eq:FpsiEq} and the max of two subsolutions is a subsolution. It thus suffices to consider the case when
\[
\inf_{\bar\Omega} v > -\infty.
\]

By \eqref{gooo},
\[
\Delta v + (\alpha - n\beta)|\nabla v|^2 \geq 0 \text{ in } \Omega \setminus E \text{ in the viscosity sense}.
\]

Note that when \eqref{Eq:anb2} holds, then the function $\tilde u = e^{-\frac{1}{|\alpha - n\beta|}v}$ satisfies
\[
\Delta \tilde u \leq 0 \text{ in } \Omega \setminus E \text{ in the viscosity sense}.
\]
As $\tilde u > 0$ in $\Omega\setminus E$ and $E$ has zero capacity, the maximum principle then implies that $\tilde u > \frac{1}{c} > 0$ in $\bar \Omega \setminus E$, and hence $\sup_{\bar\Omega \setminus E} v < +\infty$. Thus we can assume without loss of generality that \eqref{Eq:anb1} holds.

In view of the comparison principle Theorem \ref{thm:CPQuad}(b), it suffices to show that
\begin{equation}
F[v] \in \bar U \text{ in } \Omega \text{ in the viscosity sense},
	\label{Eq:CPSv+E}
\end{equation}
where we define, for $x \in E$,
\[
v(x) = \limsup_{y \rightarrow x, y \in \Omega \setminus E} v(y) < +\infty.
\]
Indeed, we note that, for $C > |\alpha - n\beta|$, the function $u = -e^{Cv} \in LSC(\bar\Omega)$ satisfies $\inf_{\bar\Omega} u > -\infty$, $\sup_{\bar\Omega} u < 0$ and
\[
\Delta u = Cu(\Delta v + C|\nabla v|^2) \leq 0 \text{ in } \Omega \setminus E \text{ in the viscosity sense}.
\]
Since $E$ has zero capacity, it follows that 
\[
\Delta u \leq 0 \text{ in } \Omega \text{ in the viscosity sense}.
\]
An application of Theorem \ref{thm:CLNRemSing} (to the set $\tilde U = \Snn \setminus (-\bar U)$) then implies that
\[
F[v] = \frac{1}{C}\frac{\nabla^2 u}{u} - \frac{1}{C} \frac{\nabla u \otimes \nabla u}{u^2} + L\Big(x,\frac{1}{C}\ln(-u), \frac{\nabla u}{Cu}\Big) \in \bar U \text{ in } \Omega \text{ in the viscosity sense},
\]
which proves \eqref{Eq:CPSv+E}, and hence the assertion.
\end{proof}


\subsection{Counterexamples to the propagation principle}\label{Sec:CPCounterex}

It this section, we give examples to illustrate that \eqref{Eq:UCondPos}, i.e. degenerate ellipticity, the properness and regularity of $L$ is insufficient to ensure the correctness of the propagation principle. These examples will also illustrate the importance of various technical assumptions in Theorems \ref{thm:CPQuad} and \ref{thm:CPNUE}.

Let $a, b \in C^1_{loc}([0,\infty))$ and consider for now a rotationally invariant operator $F$ of the form
\begin{equation}
F[\psi] = \nabla^2 \psi + a(|\nabla \psi|)\nabla\psi \otimes \nabla \psi + b(|\nabla\psi|)I.
	\label{Eq:FpiRotInv}
\end{equation}
In other words, we have
\[
L(p) = a(|p|) p \otimes p - b(|p|)\,I.
\]
Note that although $a, b$ are locally differentiable, $L$ is in general only locally Lipschitz. $L$ is locally differentiable if and only if $b'(0) = 0$.

The following example suggests that some delicate attention should be paid if one allows $m = 1$ in condition \eqref{Eq:Lcond2} (in the context of Theorem \ref{thm:CPNUE}).

\begin{prop}\label{prop:Ctex}
Let $a, b \in C^1([0,\infty))$ and $F$ be of the form \eqref{Eq:FpiRotInv}. If
\[
b(0) = 0 \text{ and } b'(0) \neq 0,
\]
then the propagation principle does not hold for $F$, namely there exist a bounded domain $\Omega \in \RR^n$, a non-empty open set $U \subset \Snn$ satisfying \eqref{Eq:UCondPos}, and a supersolution $w \in C^2(\bar\Omega)$ and a subsolution $v \in C^2(\bar\Omega)$ of \eqref{Eq:FpsiEq} in $\Omega$ such that $w > v$ on $\partial\Omega$, but $\min_{\bar\Omega} (w - v) = 0$.
\end{prop}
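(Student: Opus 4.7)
The plan is to give a radial construction on a ball $\Omega = B_R(0)$ with $R > 2/|b'(0)|$, splitting into cases according to the sign of $b'(0)$ and choosing $U$ accordingly. For small $\epsilon > 0$ (to be fixed below) I would set $\phi(r) = \epsilon((r - R/2)_+)^4$, so that $\phi \in C^3([0,\infty))$ with $\phi \equiv 0$ on $[0, R/2]$, and the radial function $\pm\phi(|x|)$ belongs to $C^3(\bar B_R)$ and vanishes identically on the interior set $\bar B_{R/2}$. A direct computation with $r = |x|$ and $\hat x = x/r$ gives
\[
F[\pm\phi(|x|)] = \bigl[\pm\phi''(r) + a(\phi'(r))(\phi'(r))^2 + b(\phi'(r))\bigr]\hat x\otimes\hat x + \bigl[\pm\phi'(r)/r + b(\phi'(r))\bigr](I - \hat x\otimes\hat x),
\]
whose eigenvalues are a simple ``radial'' one $\pm\phi'' + a(\phi')^2 + b(\phi')$ and an $(n-1)$-fold ``tangential'' one $\pm\phi'/r + b(\phi')$. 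Writing $b(t) = b'(0)t + o(t)$ as $t \to 0^+$ and noting $\phi'(r) = 4\epsilon(r - R/2)^3 \to 0$ uniformly in $r$ as $\epsilon \to 0^+$, the tangential eigenvalue equals $\phi'(r)\bigl(\pm 1/r + b'(0) + o_\epsilon(1)\bigr)$, and the condition $R/2 > 1/|b'(0)|$ forces $\pm 1/r + b'(0)$ to have a definite, $r$-uniform sign on $(R/2, R)$.

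For $b'(0) < 0$, I would take $U = \{M \in \Snn : M > 0\}$ (which satisfies \eqref{Eq:UCondPos}), $v \equiv 0$ and $w(x) = \phi(|x|)$. Then $v$ is a classical subsolution since $F[v] \equiv 0 \in \partial U \subset \bar U$. For $w$: on $\bar B_{R/2}$ one has $F[w] \equiv 0 \notin U$, while on the annulus $R/2 < |x| < R$ the tangential eigenvalue is $\phi'(r)(1/r + b'(0) + o(1))$, which is strictly negative for $\epsilon$ small because $1/r \leq 2/R < |b'(0)|$; thus $F[w]$ is never positive definite on $\bar\Omega$ and $w$ is a supersolution. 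Clearly $v \leq w$ in $\bar\Omega$, $w = \epsilon(R/2)^4 > 0 = v$ on $\partial B_R$, yet $w \equiv v \equiv 0$ on $\bar B_{R/2}$, giving $\min_{\bar\Omega}(w - v) = 0$.

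For $b'(0) > 0$, I would instead take $U = \{M \in \Snn : \lambda_{\max}(M) > 0\}$, which satisfies \eqref{Eq:UCondPos} via $\lambda_\max(A + B) \geq \lambda_\max(A) + \lambda_\min(B) > 0$ for $A \in U$ and $B > 0$; then set $w \equiv 0$ and $v(x) = -\phi(|x|)$. Here $F[w] \equiv 0$ has $\lambda_\max = 0$, so $F[w] \in \Snn \setminus U$ and $w$ is a supersolution; and the tangential eigenvalue of $F[v]$ is $\phi'(r)(-1/r + b'(0) + o(1))$, which is strictly positive on the annulus for $\epsilon$ small, whence $\lambda_\max(F[v]) > 0$ there, while $F[v] \equiv 0 \in \bar U$ on $\bar B_{R/2}$. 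Thus $F[v] \in \bar U$ throughout $\Omega$, $v$ is a subsolution, and the boundary and touching conditions hold as in the previous case.

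The main obstacle to a unified construction is that the radial eigenvalue $\pm\phi'' + a(\phi')^2 + b(\phi')$ is dominated by $\pm\phi''(r) \sim \pm 12\epsilon(r - R/2)^2$ near $r = R/2$, and so has the ``wrong'' sign there; this prevents a uniform treatment of both sign cases using a $U$ tailored to $\lambda_{\min}$ or to $\mathrm{tr}$, and is precisely what forces the split between $\lambda_{\min}$-based and $\lambda_{\max}$-based choices of $U$. The quantitative threshold $R > 2/|b'(0)|$ is exactly what makes the $b'(0)$ term dominate $1/r$ in the tangential eigenvalue with the required sign, which is the heart of the argument.
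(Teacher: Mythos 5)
Your construction is correct and rests on the same mechanism as the paper's proof: a small radial perturbation of the trivial solution $0$ on a domain of large radius, where the $(n-1)$-fold tangential eigenvalue $\pm\phi'/r + b(|\phi'|) = \phi'\bigl(\pm 1/r + b'(0) + o(1)\bigr)$ acquires a definite sign because $|b'(0)| > 1/r$, while the radial eigenvalue (and hence $a$) is irrelevant. The only differences are minor: the paper works on an annulus with touching set the sphere $\{|x| = r_0\}$ and reduces the case $b'(0) > 0$ to $b'(0) < 0$ via the substitution $\psi \mapsto -\psi$ (always using the positive-definite cone for $U$), whereas you use a ball with a flat core and handle $b'(0) > 0$ directly with $U = \{M \in \Snn: \lambda_{\max}(M) > 0\}$.
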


\begin{proof}
Considering $F[-\psi]$ instead of $F[\psi]$ if necessary, we can assume without loss of generality that 
\begin{equation}
b'(0) < 0. 
	\label{Eq:b'0<0}
\end{equation}

Let $U$ be the set of positive definite symmetric $n \times n$ matrices. Note that $v \equiv 0$ is a solution of \eqref{Eq:FpsiEq} on $\RR^n$. Since $L$ is independent of $\psi$, it suffices to exhibit a bounded domain $\Omega$, and a supersolution $w \in C^2(\bar\Omega)$ of \eqref{Eq:FpsiEq} in $\Omega$ such that $w > 0$ on $\partial\Omega$, but $\min_{\bar\Omega} w = 0$.

In view of \eqref{Eq:b'0<0} and the fact that $b(0) = 0$, there exists some $r_0 > 0$ and $\delta > 0$ such that
\begin{equation}
b(s) < 0 \text{ and } r_0 > \frac{s}{|b(s)|} \text{ for all } s \in (0,\delta).
	\label{Eq:Ctexr0}
\end{equation}
Let $\Omega = \{r_0 -1 < |x| < r_0 + 1\}$ and $w(x) = w(|x|)$ for some $w \in C^2([r_0 - 1 ,r_0 + 1])$ satisfying $w(r_0) = w'(r_0) = 0$ and
\begin{align}
w'(r) &\in (-\delta,0) \text{ for } r \in [r_0-1,r_0),\label{Eq:Ctexw'left}\\
w'(r) &\in (0,\delta) \text{ for } r \in (r_0, r_0+1].\label{Eq:Ctexw'right}
\end{align}
Then $w > 0$ on $\partial\Omega$ and $\min_{\bar\Omega} w = 0$.

To conclude the proof, we check that $F[w]$ is not positive definite. Indeed, the eigenvalues of $F[w]$ are given by
 $$
 \lambda(F[w]) = (\mu, \nu, \ldots, \nu) \text{ where } \mu = w'' + a(|w'|)|w'|^2 + b(|w'|) \text{ and } \nu = \frac{1}{r}w' + b(|w'|).
$$
Now, for $r < r_0$, we have $w' < 0$ (thanks to \eqref{Eq:Ctexw'left}) and $b(|w'|) < 0$ and so $\nu < 0$. For $r > r_0$, we have, in view of \eqref{Eq:Ctexr0} and \eqref{Eq:Ctexw'right},
\[
\nu = w'\Big(\frac{1}{r} - \frac{|b(w')|}{w'}\Big) < w'\Big(\frac{1}{r} - \frac{1}{r_0}\Big) < 0.
\]
Also, $\nu = 0$ when $r = r_0$. It thus follows that $\nu \leq 0$ in $(r_0 - 1, r_0 + 1)$, i.e. $F[w]$ is not positive definite. The proof is complete.
\end{proof}

The previous result show that the propagation principle does not hold for general operators of the form \eqref{Eq:FDef}. However, it is easy to see that the function $L$ in Proposition \ref{prop:Ctex} is Lipschitz but not $C^1$. We will next construct some counterexamples with smooth $L$.

For $\alpha \in \RR$, consider the rotationally invariant operator
\begin{equation}
F[\psi] = \nabla^2 \psi - (\psi^3\,|\nabla\psi|^{10} + \alpha\,\psi\,|\nabla\psi|^6 + |\nabla \psi|^4)I,
	\label{Eq:F2/3Form}
\end{equation}
i.e.
\[
L(s,p) = -(s^3|p|^{10} + \alpha\,s |p|^6 + |p|^4)I,
\]
which is an analytic function of $s$ and $p$. Note that neither condition \eqref{Eq:Lcond2} nor condition \eqref{Eq:Lcond2Cat} is satisfied for this function $L$. Note also that the leading part of $L(s,p)$ changes sign as $s$ varies -- this should be compared the assumption that $\beta(w)$ is of one sign in Theorem \ref{thm:CPQuad}.

\begin{prop}\label{prop:beta<>0}
Let $n \geq 2$, $U$ be the set of positive definite symmetric $n \times n$ matrices, and $F$ be of the form \eqref{Eq:F2/3Form} for some $\alpha < -\frac{5}{2}$. Then the propagation principle does not hold: there exists a bounded domain $\Omega$, a supersolution $w$ and a subsolution $v$ of \eqref{Eq:FpsiEq} in $\Omega$ such that $w > v$ on $\partial\Omega$ but $\min_{\bar\Omega} (w - v) = 0$.
\end{prop}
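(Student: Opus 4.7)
The plan is to exploit the scale-invariance of the operator under $\tilde\psi(x)=\lambda^{-2/3}\psi(\lambda x)$, which singles out the family of radial candidates $\psi_A(x):=A|x|^{2/3}$ with $A>0$. I would first show by direct computation (substituting $\psi_A$ into $F$) that away from the origin the eigenvalues of $\nabla^2\psi_A$ are $-\frac{2}{9}Ar^{-4/3}$ (radial, simple) and $\frac{2}{3}Ar^{-4/3}$ (tangential, multiplicity $n-1$), and that $m(\psi_A,|\nabla\psi_A|)$ is also of the form $Cr^{-4/3}$ with an explicit constant. Setting $b:=\frac{4}{9}A^3$, algebraic manipulation would yield:
\begin{itemize}
\item $\psi_A$ is a classical supersolution of $F[\psi]\in\partial U$ on $\mathbb{R}^n\setminus\{0\}$ iff $P(b)\geq 0$;
\item $\psi_A$ is a classical subsolution on $\mathbb{R}^n\setminus\{0\}$ iff $P(b)\leq 0$,
\end{itemize}
where $P(b):=2b^4+2\alpha b^2+2b+1$. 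At $\alpha=-5/2$ one checks $P(b)=(b-1)^2(2b^2+4b+1)$, so $b=1$ is a double root and $P\geq 0$ throughout $b>0$.

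Next I would observe that $P(1)=2\alpha+5<0$ as soon as $\alpha<-5/2$, while $P(0)=1>0$ and $P(b)\to+\infty$ as $b\to+\infty$. Hence $P$ must have (at least) two distinct positive roots $0<b_1<b_2$, with $P<0$ on $(b_1,b_2)$. Setting $A_i=(9b_i/4)^{1/3}$, the two distinct radial functions $\psi_{A_1}$ and $\psi_{A_2}$ are both viscosity solutions of $F[\psi]\in\partial U$ on $\mathbb{R}^n\setminus\{0\}$, i.e.\ simultaneously sub- and supersolutions there.

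Having this two-parameter family of solutions, I would build $\Omega$, $v$, and $w$ out of $\psi_{A_1},\psi_{A_2}$ and the null-solution $\psi\equiv 0$, taking care that the singular point $0$ (where supersolution fails, as verified by the test function $\varphi(y)=\epsilon+\tfrac12|y|^2$) is excluded from $\Omega$. A natural construction is to set $v=\psi_{A_1}$ as the subsolution and to take $w$ to be the supersolution obtained either as a constant shift $w=\psi_{A_2}+c$ (which remains a supersolution because $\partial_s m>0$ at $A_2$ for $\alpha$ near $-5/2$, as can be checked using the explicit formula for $\partial_s m$ at $\psi_A$) or as the minimum of two translated copies of $\psi_{A_2}$, then choose $\Omega$ to be a suitable annulus or ``lens'' region straddling the set where $w=v$.

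The main obstacle—and the heart of the argument—is arranging the touching set $\{w=v\}$ to be a compact subset of the interior of $\Omega$ with $w>v$ on $\partial\Omega$, since a second-order expansion of $w-v$ at a touching point and the sub/supersolution inequalities force the difference of the Hessians to share the radial null direction of $F[\psi_{A_i}]\in\partial U$, i.e.\ touching can only occur with degenerate second-order contact. The condition $\alpha<-5/2$ enters here in two essential ways: it is exactly what makes $P$ change sign, producing \emph{two} solution branches $A_1\neq A_2$, and it is also what makes the sign-changing ``$\beta$''-coefficient $s^3$ in front of $|\nabla\psi|^{10}$ genuinely obstructive to the first-variation lemma of Section 3.2, so no contradiction is obtained from the propagation argument of Theorem~\ref{thm:CPNUE}. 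Once the touching configuration is arranged, verification that $v$ is a viscosity subsolution on $\Omega$ (trivially at its singular point by upper semicontinuity considerations, as in the analysis of $A|x|^{2/3}$ at $0$) and that $w$ is a viscosity supersolution on $\Omega$ (by checking that $\Omega$ avoids every singular center appearing in $w$) completes the counterexample.
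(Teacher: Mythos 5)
Your algebraic reduction is sound and matches the paper's: for $\psi_A=A|x|^{2/3}$ (centered at a point) the radial eigenvalue of $F[\psi_A]$ is a negative multiple of $P(b)\,r^{-4/3}$ with $b=\tfrac49A^3$ and $P(b)=2b^4+2\alpha b^2+2b+1$, which is exactly the paper's quartic $8P_4(t)+6561$ after $b=\tfrac49 t$; and $\alpha<-\tfrac52$ is precisely what makes $P$ change sign, so the identification of two solution branches $A_1<A_2$ is correct. The genuine gap is that the counterexample itself — the pair $(w,v)$ with $w\geq v$ in $\Omega$, $w>v$ on $\partial\Omega$ and an interior touching point — is never constructed: you explicitly defer "arranging the touching set" as "the main obstacle and the heart of the argument," and the specific configurations you sketch cannot produce it. If $w=\psi_{A_2}+c$ with $c>0$ then $w-v\geq c$ and there is no touching; if $c\le 0$ either the touching sits at the origin (which you must exclude, since the upward cusp there kills the supersolution property, as you note) or the ordering $w\geq v$ fails inside a sphere, and with $\partial_s m>0$ a negative shift pushes the zero radial eigenvalue strictly positive, destroying the supersolution property altogether. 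For $w=\min$ of translated copies of $\psi_{A_2}$ touching $v=\psi_{A_1}$ at a point $x_0$ where both are smooth, first-order contact forces $\nabla\psi_{A_2}(x_0-z)=\nabla\psi_{A_1}(x_0)$, hence $|x_0-z|=(A_2/A_1)^{3}|x_0|$, while the requirement $\nabla^2(w-v)(x_0)\geq 0$ in the tangential directions forces $|x_0-z|\leq (A_2/A_1)^{3/4}|x_0|$ — incompatible for $A_2>A_1$ in $n\geq2$; touching at a ridge of the min is likewise impossible (a smooth $v$ below would need two different gradients at $x_0$), and touching at the cusp of $v$ is impossible because any locally Lipschitz $w$ lies below $A_1|x-x_0|^{2/3}+v(x_0)$ near $x_0$. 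So within the family you allow (positive-$A$ profiles, translates, constants, minima), no admissible touching configuration exists, and the "degenerate second-order contact" you invoke cannot be realized. (A secondary issue: your justification that $\psi_{A_2}+c$ stays a supersolution is only argued "for $\alpha$ near $-5/2$," whereas the statement covers all $\alpha<-\tfrac52$.)

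The paper avoids this obstruction by a different geometric device: it centers the $|\cdot|^{2/3}$-profiles on a \emph{sphere}, $\psi_t(x)=t^{1/3}|r-r_0|^{2/3}$, and glues \emph{different} branches on the two sides of $\{r=r_0\}$, with coefficients of different signs ($t_2<0<t_3<t_4$ roots of the quartic, plus an auxiliary $t_0<t_1$ making $F[\psi_{t_0}]$ positive definite). Then $w=\psi_{t_4}$ (outside) $/\,\psi_{t_2}$ (inside) and $v=\psi_{t_3}$ (outside) $/\,\psi_{t_0}$ (inside) satisfy $w\geq v$ on the annulus $\{|r-r_0|<\delta\}$ with equality exactly on the sphere, and at the sphere the two-sided cusp (negative branch inward, positive branch outward) makes the viscosity conditions vacuous: no $C^2$ function can touch $w$ from below or $v$ from above there. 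That sign-mixing across a codimension-one touching set is the missing idea in your proposal; with cusps only at isolated centers, either the supersolution property fails at the center or the ordering/touching cannot both hold, which is exactly why your construction stalls.
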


\begin{proof}
Fix some $r_0 > 0$. For $t \in \RR$, let
\begin{equation}
\psi_t(x) = \psi_t(r) = t^{\frac{1}{3}}\,|r - r_0|^{\frac{2}{3}}, \qquad \text{ where }r = |x|.
	\label{Eq:psitbeta<>0}
\end{equation}
The eigenvalues of $F[\psi_t]$ are $(\lambda_{1,t}, \lambda_{2,t}, \ldots, \lambda_{2,t})$ where
\begin{align*}
\lambda_{1,t}
	&= \psi_t'' - \psi_t^3|\psi_t'|^{10} - \alpha\,\psi_t|\psi_t'|^6 - |\psi_t'|^4\\
	&= - \frac{2}{59049}\frac{t^{\frac{1}{3}}(8P_4(t) + 6561)}{|r-r_0|^{\frac{4}{3}}},\\
\lambda_{2,t}
	&= \frac{1}{r} \psi_t' - \psi_t^3|\psi_t'|^{10} - \alpha\,\psi_t|\psi_t'|^6 - |\psi_t'|^4\\
	&= -\frac{2}{59049}\frac{t^{\frac{1}{3}}(8 P_4(t) -19683\frac{r-r_0}{r})}{|r-r_0|^{\frac{4}{3}}},
\end{align*}
and where $P_4(t) = 64\,t^4 + 324\,\alpha\,t^2 + 729\,t$.

Note that $P_4(0) = 0$, and, as $\alpha < -\frac{5}{2}$, 
\begin{align*}
P_4(-2) 
	&= -434 + 1296\,\alpha<  -3674,\\
P_4\Big(\frac{9}{4}\Big)
	&= \frac{6561}{4}(2 + \alpha) < - \frac{6561}{8}.
\end{align*}
It follows that the equation $8P_4(t) + 6561 = 0$ has four roots $t_1, \ldots, t_4$ satisfying $t_1 < -2 < t_2 < 0 < t_3 < \frac{9}{4} < t_4$. In particular, we have $\lambda_{1,t_i}(r) = 0$ for $r \neq r_0$, $i = 1, \ldots, 4$. Also, from the expression of $\lambda_{2,t}$, we can find some small $\delta > 0$ such that
\[
t_i\,\lambda_{2,t_i}(r) > 0 \text{ for } r \neq r_0, |r - r_0| \leq \delta.
\]
In addition, there exists $t_0 < t_1$ such that 
\[
\lambda_{1,t_0}(r) >0 \text{ and } \lambda_{2,t_0}(r) > 0\text{ for } r \neq r_0, |r - r_0| \leq \delta.
\]

We define, 
\begin{align*}
w(x) &= w(r) = \left\{\begin{array}{ll}
	\psi_{t_4}(r) & \text{ for } r_0 \leq r \leq r_0 + \delta,\\
	\psi_{t_2}(r) & \text{ for } r_0 -\delta \leq r < r_0,
\end{array}\right.\\
v(x) &= v(r) = \left\{\begin{array}{ll}
	\psi_{t_3}(r) & \text{ for } r_0 \leq r \leq r_0 + \delta,\\
	\psi_{t_0}(r) & \text{ for } r_0 -\delta \leq r < r_0.
\end{array}\right.
\end{align*}
It is readily seen that $w$ and $v$ are respectively a supersolution and a subsolution of \eqref{Eq:FpsiEq} in $\Omega = \{|r - r_0| < \delta\}$, $w \geq v$ in $\Omega$ and $\{w = v\} = \{r = r_0\}$. This finishes the proof.
\end{proof}

The previous example can be modified to give a counterexample to the propagation principle with $L$ being non-increasing in $s$.

\begin{prop}\label{prop:CtexNonDec}
Let $n \geq 2$ and $U$ be the set of positive definite symmetric $n \times n$ matrices. There exists a smooth function $L: \RR \times \RR^n \rightarrow \mathcal{S}^{n \times n}$ such that $L$ is non-increasing in $s$ but the propagation principle does not hold for $F$ of the form \eqref{Eq:FDef}: there exists a bounded domain $\Omega$, a supersolution $w$ and a subsolution $v$ of \eqref{Eq:FpsiEq} in $\Omega$ such that $w > v$ on $\partial\Omega$ but $\min_{\bar\Omega} (w - v) = 0$.
\end{prop}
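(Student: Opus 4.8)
The plan is to build on Proposition \ref{prop:beta<>0} by ``freezing'' the troublesome sign-changing coefficient. In Proposition \ref{prop:beta<>0}, the operator is of the form $F[\psi] = \nabla^2\psi - \beta(\psi,\nabla\psi)I$ with $\beta(s,p) = s^3|p|^{10} + \alpha\,s|p|^6 + |p|^4$, which is \emph{not} monotone in $s$. I want instead a smooth $L(s,p)$ that is non-increasing in $s$ but produces the same pair $w,v$ (built from the radial profiles $\psi_t(r) = t^{1/3}|r-r_0|^{2/3}$) as a supersolution/subsolution pair. The point is that in the counterexample only the values $t \in \{t_0, t_2, t_3, t_4\}$ (and the sign behaviour of $\lambda_{1,t}, \lambda_{2,t}$ near $r=r_0$) are actually used, so I have a lot of freedom to redefine $L$ away from the relevant locus.

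Concretely, I would proceed as follows. First, recall from the proof of Proposition \ref{prop:beta<>0} that on the sphere $r = r_0$ one has $\psi_t = 0$, while for $0 < |r - r_0| \le \delta$ the four profiles $\psi_{t_0}, \psi_{t_2}, \psi_{t_3}, \psi_{t_4}$ take values in disjoint ranges (because $t_0 < t_1 < t_2 < 0 < t_3 < t_4$ and $\psi_t$ is monotone in $t$ for fixed $r$); so along each ray the map $r \mapsto \psi_{t_i}(r)$ sweeps out an interval of $s$-values, and near $r = r_0$ these intervals cluster near $0$ from below (for $t_0, t_2$, where $t<0$) and from above (for $t_3, t_4$). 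Second, I would choose a smooth cutoff/modification $\chi(s)$ that equals $1$ on a neighbourhood of the $s$-range swept by $\psi_{t_3}, \psi_{t_4}$ and by $\psi_{t_0}, \psi_{t_2}$ near $r_0$, and then set
\[
L(s,p) = -\chi(s)\,\big(s^3|p|^{10} + \alpha\,s|p|^6 + |p|^4\big)\,I - N\,s\,|p|^{k}\,I
\]
for a large constant $N$ and a suitable even integer $k$; the extra term $-N s|p|^k I$ is monotone non-increasing in $s$ and, away from $p = 0$, dominates the $s$-derivative of the first term on the compact set of relevant $(s,p)$, making the total $L$ non-increasing in $s$. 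Since $\chi \equiv 1$ along the locus traced out by $w$ and $v$, the eigenvalue computations of Proposition \ref{prop:beta<>0} are unchanged there (one must also check $|p|$ stays bounded away from a neighbourhood where the $-Ns|p|^kI$ term could spoil the signs of $\lambda_{1,t_i}$ and $\lambda_{2,t_i}$ — this is where shrinking $\delta$ and tuning $N$, $k$ comes in), so $w$ remains a supersolution and $v$ a subsolution, $w \ge v$ in $\Omega$ with $\{w = v\} = \{r = r_0\}$, and the propagation principle fails.

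The main obstacle I anticipate is the simultaneous bookkeeping: I need $L$ to be \emph{globally} smooth and \emph{globally} non-increasing in $s$ on all of $\RR \times \RR^n$ (not just on the compact region where $w,v$ live), while still coinciding exactly with the Proposition \ref{prop:beta<>0} operator on the locus $\{(\psi_{t_i}(r), \nabla\psi_{t_i}(r)) : |r - r_0| \le \delta,\ i \in \{0,2,3,4\}\}$ so that the explicit formulas for $\lambda_{1,t_i}, \lambda_{2,t_i}$ carry over verbatim. The delicate check is that adding the globally-monotone corrector term $-Ns|p|^kI$ does not change the signs of the eigenvalues along that locus: near $r = r_0$ we have $\psi_{t_i} \to 0$ and $|\nabla\psi_{t_i}| \to \infty$, so one must verify that the corrector's contribution is of lower order there (choosing $k$ small enough relative to the exponents $4, 6, 10$ and using $\psi_{t_i}(r) \sim t_i^{1/3}|r-r_0|^{2/3}$, $|\psi_{t_i}'(r)| \sim \tfrac23 |t_i|^{1/3}|r-r_0|^{-1/3}$), and that on the remaining compact part of the locus one simply takes $N$ possibly large but the signs of $\lambda_{1,t_i}, \lambda_{2,t_i}$ are strict (by the choice of $t_i$ and $\delta$), so a sufficiently mild corrector preserves them. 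Once these sign estimates are in hand, the conclusion is immediate from the structure already established in the proof of Proposition \ref{prop:beta<>0}.
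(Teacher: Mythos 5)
There is a genuine gap, and it is not just bookkeeping: the two requirements you yourself flag are mutually exclusive. Along the locus the quantity $s|p|^2$ is \emph{constant}: for $\psi_t$ as in \eqref{Eq:psitbeta<>0} one has $\psi_t\,|\psi_t'|^2\equiv\frac49 t$, so on the locus the three retained terms $s^3|p|^{10}$, $s|p|^6$, $|p|^4$ are all exactly of order $|p|^4$, while your corrector contributes $N s|p|^k=\frac49 N t\,|p|^{k-2}$; it is of lower order there only if $k<6$. On the other hand, the set where $\chi\equiv 1$ necessarily contains a full slab $\{|s|\le \epsilon_0\}\times\RR^n$ (the $s$-ranges swept by the four profiles all accumulate at $0$ — they are nested, not disjoint — so any neighbourhood of their union contains an interval around $0$), and on that slab, at points with $|s||p|^2$ small, the $s$-derivative of the retained term is $-(3s^2|p|^{10}+\alpha|p|^6)I\succeq\frac{|\alpha|}{2}|p|^6 I\succ 0$ (recall $\alpha<-\frac52$); to dominate this you need $N|p|^k\gtrsim|p|^6$ for arbitrarily large $|p|$, i.e.\ $k\ge 6$. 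With $k\ge 6$ the corrector is of the same or higher order than the terms that produced the signs of $\lambda_{1,t_i},\lambda_{2,t_i}$: for instance on $v=\psi_{t_3}$ it turns $\lambda_{1,t_3}=0$ into $-\frac49 N t_3|p|^4+o(|p|^4)<0$, destroying the subsolution property. So no choice of $k$, $N$, $\chi$, $\delta$ makes your formula work.

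More fundamentally, \emph{no} $L$ that is non-increasing in $s$ (smooth or not, isotropic or not) can keep the pair $(w,v)$ of Proposition \ref{prop:beta<>0}, so the plan cannot be rescued by a different corrector. Fix a unit vector $e$ and a large $q$, and choose $r_-<r_0<r_+$ with $|\nabla\psi_{t_2}(r_-e)|=|\nabla\psi_{t_3}(r_+e)|=q$; then $\nabla w(r_-e)=\nabla v(r_+e)=qe$, $w(r_-e)<0<v(r_+e)$, and
\[
\nabla^2 w(r_-e)-\nabla^2 v(r_+e)=\tfrac98\Big(\tfrac1{|t_2|}+\tfrac1{t_3}\Big)q^4\,e\otimes e+q\Big(\tfrac1{r_-}-\tfrac1{r_+}\Big)\big(I-e\otimes e\big)\succ 0.
\]
Since $v$ is $C^2$ near $r_+e$, the subsolution property gives $L\big(v(r_+e),qe\big)\succeq-\nabla^2 v(r_+e)$, and monotonicity (non-increasing in $s$, with $w(r_-e)\le v(r_+e)$) then yields $L\big(w(r_-e),qe\big)\succeq-\nabla^2 v(r_+e)$, hence $F[w](r_-e)\succ 0$, i.e.\ $F[w](r_-e)\in U$, contradicting that $w$ is a supersolution. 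In other words, reusing the profiles of Proposition \ref{prop:beta<>0} and only modifying $L$ off (or mildly on) their locus is a dead end. The paper proceeds differently: it retunes the operator itself (coefficient $\frac1{100}|p|^4$ and $\alpha=-\frac{36}{25}$), producing new roots $\tilde t_i$ with $|\tilde t_2|,|\tilde t_3|>\frac85$ so that every relevant $1$-jet lies in the region $\{|s||p|^2\ge\frac{32}{45}\}$ where $\partial_s\tilde L\preceq 0$, and only then modifies $L$ outside that region; any argument of this type must in particular be tested against paired points with equal gradients, one on the negative branch of $w$ and one on the positive branch of $v$, as above — a constraint your construction does not address.
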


\begin{proof}
For $\alpha \in \RR$ to be fixed, consider
\[
\tilde L(s,p) = -(s^3|p|^{10} + \alpha\,s |p|^6 + \frac{1}{100}|p|^4)I.
\]
We first show that the propagation principle does not hold for $\tilde F = \nabla^2 + \tilde L$ as in the proof of Proposition \ref{prop:beta<>0}. 

Fix some $r_0 > 0$. For $t \in \RR$, define $\psi_t$ by \eqref{Eq:psitbeta<>0}. The eigenvalues of $\tilde F[\psi_t]$ are $(\lambda_{1,t}, \lambda_{2,t}, \ldots, \lambda_{2,t})$ where
\begin{align*}
\lambda_{1,t}
	&= \psi_t'' - \psi_t^3|\psi_t'|^{10} - \alpha\,\psi_t|\psi_t'|^6 - \frac{1}{100}|\psi_t'|^4\\
	&= - \frac{2}{1476225}\frac{t^{\frac{1}{3}}(2\tilde P_4(t) + 164025)}{|r-r_0|^{\frac{4}{3}}},\\
\lambda_{2,t}
	&= \frac{1}{r} \psi_t' - \psi_t^3|\psi_t'|^{10} - \alpha\,\psi_t|\psi_t'|^6 - \frac{1}{100}|\psi_t'|^4\\
	&= -\frac{2}{1476225}\frac{t^{\frac{1}{3}}(2 \tilde P_4(t) - 492075\frac{r-r_0}{r})}{|r-r_0|^{\frac{4}{3}}},
\end{align*}
and where $\tilde P_4(t) = 6400\,t^4 + 32400\,\alpha\,t^2 + 729\,t$.

We next fix $\alpha = -\frac{36}{25}$. Then $\tilde P_4(-2) = -85682$, $\tilde P_4(-\frac{8}{5}) = - \frac{1966568}{25}$, $\tilde P_4(\frac{8}{5}) = -\frac{1908248}{25}$, $\tilde P_4(2) = -82766$ and so the equation $2\tilde P_4(t) + 164025 = 0$ has four roots $\tilde t_1, \ldots, \tilde t_4$ satisfying $\tilde t_1 < -2 < \tilde t_2 < -\frac{8}{5} < \frac{8}{5} < \tilde t_3 < 2 < \tilde t_4$. Note that $\lambda_{1,\tilde t_i}(r) = 0$ for $r \neq r_0$, $i = 1, \ldots, 4$. Also, we can find some small $\delta > 0$ such that
\[
\tilde t_i\,\lambda_{2,\tilde t_i}(r) > 0 \text{ for } r \neq r_0, |r - r_0| \leq \delta, i \in \{2, 3, 4\}.
\]
As $\tilde P_4(-3) = 96309 > 0$, we can also assume for $\tilde t_0 = -3$ that 
\[
\lambda_{1,\tilde t_0}(r) >0 \text{ and } \lambda_{2,\tilde t_0}(r) > 0\text{ for } r \neq r_0, |r - r_0| \leq \delta.
\]

We define, 
\begin{align*}
w(x) &= w(r) = \left\{\begin{array}{ll}
	\psi_{\tilde t_4}(r) & \text{ for } r_0 \leq r \leq r_0 + \delta,\\
	\psi_{\tilde t_2}(r) & \text{ for } r_0 -\delta \leq r < r_0,
\end{array}\right.\\
v(x) &= v(r) = \left\{\begin{array}{ll}
	\psi_{\tilde t_3}(r) & \text{ for } r_0 \leq r \leq r_0 + \delta,\\
	\psi_{\tilde t_0}(r) & \text{ for } r_0 -\delta \leq r < r_0.
\end{array}\right.
\end{align*}
It is readily seen that $w$ and $v$ are respectively a supersolution and a subsolution of \eqref{Eq:FpsiEq} for the operator $\tilde F[\psi] = \nabla^2\psi + \tilde L(\psi, \nabla\psi)$ in $\Omega = \{|r - r_0| < \delta\}$, $w \geq v$ in $\Omega$ and $\{w = v\} = \{r = r_0\}$. 

Now we proceed to modify $\tilde L$ to our desired $L$ as $\tilde L$ is not non-decreasing in $s$. We note that, as $|\tilde t_2| > \frac{8}{5}$ and $|\tilde t_3| > \frac{8}{5}$, $(w(x), \nabla w(x))$ and $(v(x),\nabla v(x))$ belong to the set
\[
N := \{(s,p) \in \RR \times \RR^n: s\,|p|^2 \in \RR \setminus (-\frac{32}{45},\frac{32}{45})\} \text{ for all } x \in \Omega \setminus \{r = r_0\}.
\]
As $\partial_s \tilde L(s,p) = -|p|^6(3s^2|p|^4 + \alpha)I = -|p|^6(3s^2|p|^4 - \frac{36}{25})I$, we see that $\tilde L$ is non-increasing in $s$ for $(s,p) \in N$.

Next, note that, for a fixed $p \neq 0$, 
\[
\tilde L(-\frac{32}{45}|p|^{-2}, p) = -\frac{245821}{364500}|p|^4I < 0 < \frac{238531}{364500}|p|^4I = \tilde L(\frac{32}{45}|p|^{-2}, p).
\]
Therefore, there exists a smooth function $L: \RR \times \RR^n \rightarrow \mathcal{S}^{n \times n}$ which is non-increasing in $s$ such that $L \equiv \tilde L$ in $N$ (e.g. by smoothly interpolating in $s$ the values of $\tilde L$ on the boundary of $N$). Then $w$ and $v$ are also a supersolution and a subsolution of \eqref{Eq:FpsiEq} for the operator $F[\psi] = \nabla^2\psi + L(\psi, \nabla\psi)$ in $\Omega$. This completes the proof.
\end{proof}

\section{Perron's method}\label{Sec:Perron}

We begin with the 
\begin{proof}[Proof of Theorem \ref{thm:Uniq}]
The conclusion a direct consequence of Theorem \ref{thm:CPQuad}(b).
\end{proof}

In the rest of this section, we prove Theorem \ref{thm:Perron}. We introduce some notations. For $O\subset \RR^n$,  $\xi: O\to [-\infty, +\infty]$,
let
$$
\xi^*(x):=\lim_{r\to 0^+}
\sup \{\xi(y)\ |\ y\in O, |y-x|<r\},
$$
$$
\xi_*(x):=\lim_{r\to 0^+}
\inf \{\xi(y)\ |\ y\in O, |y-x|<r\}.
$$
It is easy to see that, if $\xi^*(x) < +\infty$ for all $x \in O$, then $\xi^* \in USC(O)$. Likewise, if $\xi_*(x) > -\infty$ for all $x \in O$, then $\xi_* \in LSC(O)$.

$\xi^*$ is called the upper semicontinuous envelope of $\xi$,
it is the smallest upper semicontinuous
function satisfying $\xi\le \xi^*$ in $O$.
Similarly,
$\xi_*$ is called the lower semicontinuous envelope of $\xi$, it is the
largest
lower semicontinuous function satisfying $\xi\ge \xi_*$ in $O$.

Note that, for any constant $c$, $F[c] = 0 \in \partial U$. Thus, replacing $v$ by $\max(v,c)$ with some $c < \inf_{\partial\Omega} w$ and $w$ by $\min(w,c')$ with some $c' > \sup_{\partial\Omega} v$ if necessary, we can assume that
\[
-\infty < \inf_{\bar\Omega} v \leq \sup_{\bar\Omega} w < +\infty.
\]
Here we have used the fact that the maximum of two subsolutions is a subsolution and the minimum of two supersolutions is a supersolution.

Note that by hypotheses, $w\ge v$ in $\Omega$. Define
\begin{eqnarray}
u(x):=
&&
\inf\{\xi(x)\ |\
v\le \xi\le w\ \mbox{in}\ \overline \Omega,
\xi=v=w\ \mbox{on}\ \partial \Omega,\nonumber \\
&& \qquad \qquad \xi\in LSC(\overline \Omega),\
\xi\ \mbox{is a supersolution of \eqref{Eq:FpsiEq} in}\ \Omega\}.
\label{2new}
\end{eqnarray}

Clearly
$$
\inf_{ \overline \Omega} u
\ge \inf_{ \overline \Omega} v>-\infty.
$$

We will prove that the above defined $u$ satisfies the requirement of Theorem \ref{thm:Perron}.

\begin{lem}\label{lemC5-1new}
Let $O\subset \RR^n$ be an open set, $L: O \times \RR \times \RR^n \rightarrow \mathcal{S}^{n \times n}$ be continuous, $F$ be given by \eqref{Eq:FDef}, and
let ${\cal F}$ be a family of supersolutions of
\eqref{Eq:FpsiEq} in $O$.
Let
$$
\eta(x):=\inf\{\xi(x)\ |\
\xi\in {\cal F}\},
\ \ \ x\in O.
$$
Assume that $\eta_*(x)>-\infty\ \forall\ x\in O$.
Then $\eta_*$ is a supersolution of \eqref{Eq:FpsiEq} in $O$.
\end{lem}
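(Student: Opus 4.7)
The plan is a standard Perron-type argument. Note first that $\eta_*\in\mathrm{LSC}(O)$ automatically as a lower semicontinuous envelope, and $\eta_*\not\equiv +\infty$ since $\eta\leq\xi$ pointwise for any fixed $\xi\in\mathcal{F}$. Thus the real work is to verify the viscosity supersolution inequality $F[\eta_*]\in\Snn\setminus U$ at an arbitrary interior point.

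Fix $x_0\in O$ and $\varphi\in C^2(O)$ with $(\eta_*-\varphi)(x_0)=0$ and $\eta_*-\varphi\geq 0$ near $x_0$. To handle the fact that the touching may be non-strict I would first replace $\varphi$ by $\tilde\varphi(x):=\varphi(x)-\delta|x-x_0|^2$ for a small parameter $\delta>0$ (to be sent to $0$ at the end). On some closed ball $\overline{B_r(x_0)}\subset O$ one then has $(\eta_*-\tilde\varphi)(x)\geq \delta|x-x_0|^2$, so $\tilde\varphi$ touches $\eta_*$ \emph{strictly} from below at $x_0$.

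Using the definition of the lower semicontinuous envelope I would choose $z_k\to x_0$ with $\eta(z_k)\to\eta_*(x_0)$, and then $\xi_k\in\mathcal{F}$ with $\xi_k(z_k)\leq\eta(z_k)+1/k$, so $(\xi_k-\tilde\varphi)(z_k)\to 0$. Since $\xi_k-\tilde\varphi\in\mathrm{LSC}(\overline{B_r(x_0)})$ it attains its minimum at some $y_k\in\overline{B_r(x_0)}$, and $\xi_k\geq\eta\geq\eta_*$ gives
\[
0\leq \delta|y_k-x_0|^2\leq (\eta_*-\tilde\varphi)(y_k)\leq (\xi_k-\tilde\varphi)(y_k)\leq (\xi_k-\tilde\varphi)(z_k)\to 0.
\]
Hence $y_k\to x_0$ (interiorly, for large $k$) and $c_k:=(\xi_k-\tilde\varphi)(y_k)\to 0^+$. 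Then $\tilde\varphi+c_k$ touches $\xi_k$ from below at $y_k$ in the sense of Definition \ref{Def:ViscositySolution}, so the supersolution property of $\xi_k$ yields
\[
F[\tilde\varphi+c_k](y_k)=\nabla^2\tilde\varphi(y_k)+L\bigl(y_k,\tilde\varphi(y_k)+c_k,\nabla\tilde\varphi(y_k)\bigr)\in\Snn\setminus U.
\]

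Finally I would pass to the limit. Continuity of $L$, smoothness of $\tilde\varphi$, and closedness of $\Snn\setminus U$ (since $U$ is open) give $F[\tilde\varphi](x_0)\in\Snn\setminus U$. Since $\nabla^2\tilde\varphi(x_0)=\nabla^2\varphi(x_0)-2\delta I$ and $(\tilde\varphi,\nabla\tilde\varphi)(x_0)=(\varphi,\nabla\varphi)(x_0)$, this reads $F[\varphi](x_0)-2\delta I\in\Snn\setminus U$, and letting $\delta\to 0^+$ (again using closedness) gives $F[\varphi](x_0)\in\Snn\setminus U$, as required. The only real obstacle is the non-strict touching: without the $\delta$-perturbation the minimizers $y_k$ need not cluster at $x_0$ and the limiting step breaks down. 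The remaining ingredients — existence of minimizers of LSC functions on compact sets, the additive normalization $c_k$ to achieve an exact zero minimum, and closedness of $\Snn\setminus U$ — are routine.
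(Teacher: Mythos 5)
Your proof is correct and follows essentially the same route as the paper's: perturb the test function by $-\delta|x-x_0|^2$ to make the touching strict, pick an approximately minimizing sequence $\xi_k\in\mathcal{F}$ at points $z_k\to x_0$, locate touching points $y_k\to x_0$ with small additive constants $c_k$, apply the supersolution property of $\xi_k$ there, and pass to the limit in $k$ and then $\delta$ using continuity of $L$ and closedness of $\Snn\setminus U$. The only cosmetic difference is that you get $y_k\to x_0$ directly from the quantitative bound $\delta|y_k-x_0|^2\leq(\xi_k-\tilde\varphi)(z_k)\to 0$, whereas the paper argues the analogous convergence via lower semicontinuity of $\eta_*$; both are fine.
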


\begin{proof}
Suppose for some $x\in O$ that there exists a polynomial $P$ of the form
$$
P(y):=a+p\cdot (y-x)+\frac 12 (y-x)^t M(y-x),
$$
with $a \in \RR$, $p\in \RR^n$, $M\in {\cal S}^{n\times n}$, such that,
for some $\epsilon>0$,
\begin{equation}
P(x)=\eta_*(x) \text{ and } P(y)\le \eta_*(y)\ \ \forall\ |y-x|<\epsilon.
\label{C6-1new}
\end{equation}
We will show that
\begin{equation}
F[P](x)\in {\cal S}^{n\times n} \setminus  U.
\label{C6-2new}
\end{equation}
It is standard that this implies that $\eta_*$ is a supersolution of \eqref{Eq:FpsiEq} in the sense of Definition \ref{Def:ViscositySolution}.

By the definition of $\eta_*$, there exists
$r_i\to 0^+$, $|x_i-x|<r_i$ such that
$$
\inf_{B_{r_i}(x)} \eta \leq \eta(x_i) \leq \inf_{B_{r_i}(x)} \eta + \frac{1}{i} \le \eta_*(x)  + \frac{1}{i} \text{ and }
 \eta(x_i)\to \eta_*(x).
$$
Moreover,
 there exists
$\xi_i\in {\cal F}$, such that
$\xi_i\ge \eta \ge \eta_*$ and
$$
0\le \xi_i(x_i)-\eta(x_i)<\frac 1i.
$$
We see from the above that
$$
\xi_i\ge \eta\ge \eta_*\ge
P \quad \mbox{in}\ B_\epsilon(x),
$$
and
$$
\xi_i(x_i)\to \eta_*(x)=P(x).
$$

For every $0<2\delta<\min\{\epsilon,
dist(x, \partial O)\}$,
consider
$$
P_\delta(y):= P(y)-\delta|y-x|^2.
$$
Then
$$
\xi_i\ge P_\delta\ \  \mbox{in}\ B_\epsilon(x),
\quad 
\xi_i\ge P_\delta+ \delta^3\ \  \mbox{in}\ B_\epsilon(x)\setminus
B_\delta(x), \text{ and }
\xi_i(x_i)-P_\delta(x_i)
\to 0.
$$

It follows that
 there exists $\beta_i=\circ(1) \ge 0$ and $x_i^*\in B_\delta(x)$ such that
\begin{equation}\label{C7-0new}
\xi_i(y)\ge P_\delta(y) + \beta_i,\ \ \mbox{in}\ B_\epsilon(x),
 \qquad
\xi_i(x_i^*)=P_\delta(x_i^*) + \beta_i.
\end{equation}
As $\xi_i$ is also a supersolution of \eqref{Eq:FpsiEq} in $O$.  Thus,
\begin{equation}
F[P_\delta + \beta_i](x_i^*)
\in {\cal S}^{n\times n}\setminus 
 U.
\label{C7-1new}
\end{equation}

\noindent{\bf Claim.}\  $x_i^*\to x$.

\medskip

Indeed, after passing to a subsequence,
$x_i^*\to \bar x$,  for some $\bar x$ satisfying
$|\bar x-x|\le \delta.$
By \eqref{C7-0new} and the definition of $\eta$ and $\eta_*$,
$$
\eta_*(x_i^*) - \beta_i \le \xi_i(x_i^*) - \beta_i=P_\delta(x_i^*).
$$
Sending $i$ to infinity in the above,
and using the lower-semicontinuity property of $\eta_*$,
we have
$
\eta_*(\bar x) \le P_\delta(\bar x)=P(\bar x)-\delta|\bar x-x|^2.
$
On the other hand, $P(\bar x)\le \eta_*(\bar x)$ according to
\eqref{C6-1new}.
Thus $\bar x=x$, and the claim is proved.

\medskip

With the convergence of $x_i^*$ to $x$ and of $\beta_i$ to $0$, sending $\delta$ to $0$ and $i$ to $\infty$ in
\eqref{C7-1new} yields
\eqref{C6-2new}.
Lemma \ref{lemC5-1new} is established.
\end{proof}

\bigskip

\begin{proof}[Proof of Theorem \ref{thm:Perron}]
We know that
\begin{equation}
\max(v,u_*)\le u\le \min(u^*, w),
\qquad\mbox{in}\ \overline \Omega,
\label{C9-1new}
\end{equation}
where $u$ is defined by \eqref{2new}.  Clearly,
\begin{equation}
v= u_*= u=
u^*= w,
\qquad\mbox{on}\ \partial \Omega,
\label{C9-2new}
\end{equation}
By Lemma \ref{lemC5-1new},
$u_*$ is a supersolution of \eqref{Eq:FpsiEq} in $\Omega$. By the comparison principle Theorem \ref{thm:CPQuad}(ii), $u_* \geq v$. Hence, by the definition of $u$, $u\le u_*$ 
in $\overline \Omega$.
Thus $u=u_*$ in $\overline \Omega$, and $u$ is a supersolution of \eqref{Eq:FpsiEq} 
in $\Omega$.

Note that
\[
\sup_{\bar\Omega} u^* \leq \sup_{\bar\Omega} w < +\infty.
\]
\medskip

\noindent{\bf Claim.}\
$u^*$ is a subsolution of \eqref{Eq:FpsiEq}  in $\Omega$.

\medskip

To prove this claim, we follow Ishii's argument (\cite{Ishii89-CPAM}). Indeed, if the claim does not hold, there exist $x\in \Omega$ and some quadratic polynomial
$$
P(y)=a+p\cdot (y-x)+\frac 12 (y-x)^t M (y-x),
$$
with $a \in \RR$, $p\in \RR^n$, $M\in {\cal S}^{n\times n}$, such that
for some $\bar \epsilon>0$ 
\begin{equation}
P(y)\ge u^*(y)\ \ \mbox{for}\ y\in
 B_{\bar\epsilon}(x),\qquad
P(x)=u^*(x),
\label{C10-1new}
\end{equation}
but
\begin{equation}
F[P](x)\in {\cal S}^{ n\times n}\setminus \overline U.
\label{C10-2new}
\end{equation}

Since ${\cal S}^{ n\times n}\setminus \overline U$ 
is open, there exists $0<2\bar\delta<
\min\{\bar \epsilon^2,  1\}$ such that
for all $0<\delta<\bar\delta$, the function
$$
P_\delta(y):=P(y)+\delta|y-x|^2 -\delta^2
$$
satisfies
\begin{equation}
P_\delta(x)=P(x)-\delta^2<u^*(x),
\label{C11-0new}
\end{equation}
and
\begin{equation}
F[P_\delta](y)\in {\cal S}^{ n\times n}\setminus \overline  U,\qquad
\forall\ |y-x|<\delta^{1/9}.
\label{C10-3new}
\end{equation}
Clearly,
\begin{equation}
P_\delta(y)>P(y),\qquad
\forall \ |y-x|\ge  \delta^{1/5}.
\label{C11-1new}
\end{equation}

Define
$$
\hat u(y):=
\left\{
\begin{array}{lr}
\displaystyle{
\min\{u(y), P_\delta(y)\},
}&
\mbox{if}\ |y-x|<\delta^{1/5},\\
u(y), &
\mbox{if}\ |y-x|\ge \delta^{1/5}.
\end{array}
\right.
$$
By \eqref{C10-3new},
$P_\delta$ is a supersolution of \eqref{Eq:FpsiEq}  in $\{y: |y-x|<\delta^{1/9}\}$.
By \eqref{C11-1new}, and using $P\ge u^*\ge u$, we have
$$
\hat u(y)=u(y)=
\min\{u(y), P_\delta(y)\},\qquad
\delta^{1/5}\le
|y-x|\le \delta^{1/6}.
$$
It follows that $\hat u$, being the minimum
 of two supersolutions,
 is a supersolution of \eqref{Eq:FpsiEq}  in $\Omega$, and,
because of the definition of $u$,
\begin{equation}
u\le \hat u\qquad\mbox{in}\ \Omega.
\label{C12-1new}
\end{equation}
On the other hand we see from
  \eqref{C11-0new},   the definition of $\hat u$ and \eqref{C12-1new}
that there exists $\epsilon\in (0, \delta^{1/5})$ such that
$$
u(y)\le
\hat u(y)\le P_\delta(y)<u^*(x)-\epsilon,\qquad
\forall\ |y-x|<\epsilon.
$$
Thus
$$
u^*(x)
=\lim_{r\to 0^+}
\sup\{u(y)\ |\ |y-x|<r\}
\le u^*(x)-\epsilon,
$$
a contradition.
The claim is proved, i.e. $u^*$ is a subsolution of \eqref{Eq:FpsiEq}  in $\Omega$. 

\bigskip

Now we have proved that $u_* = u$ and $u^*$ are respectively
supersolution and subsolution of \eqref{Eq:FpsiEq}  in $\Omega$, and $u_*=u^*$
on $\partial \Omega$. By the comparison principle Theorem \ref{thm:CPQuad}(ii), $u^* \leq u$ in $\Omega$ and so $u = u_* = u^*$ is a solution of \eqref{Eq:FpsiEq}.
\end{proof}

To conclude the section, let us remark that:

\begin{rem}
The conclusion of Theorem \ref{thm:Perron} is still valid for more general $(F,U)$ as in Theorem \ref{thm:CPQuad}, or Theorem \ref{thm:CPNUE} or Theorem \ref{thm:CPNUECat} provided that the function $L(x,s,p)$ is independent of $s$ and 
\[
-\infty < \inf_{\bar\Omega} v \leq \sup_{\bar\Omega} w < +\infty.
\]
\end{rem}

\section{Lipschitz regularity of viscosity solutions}\label{Sec:LipRegVS}

In this section we prove Theorem \ref{thm:regularity}, as an application of the comparison principle Theorem \ref{thm:CPQuad}. We also consider some mild generalization regarding Lipschitz regularity of viscosity solutions for operator of the form \eqref{Eq:FDef}.

\begin{proof}[Proof of Theorem \ref{thm:regularity}]
Without loss of generality, we may assume that $\Omega=B(0,1)$ and we only need to prove that $u$ is Lipschitz continuous on $\overline{B(0,\frac{1}{2})}$.

For any $x\in\overline{B(0,\frac{1}{2})}$, $0<\lambda\leq R:=\frac{1}{4}\left[\frac{\sup\limits_{B(0,\frac{3}{4})}u}{\inf\limits_{B(0,\frac{3}{4})}u}\right]^{-\frac{1}{n-2}}$, we define $u_{x,\lambda}$, the Kelvin transform of $u$, as 
\begin{equation}
u_{x,\lambda}(y):=\frac{\lambda^{n-2}}{|y-x|^{n-2}}u(x+\frac{\lambda^{2}(y-x)}{|y-x|^{2}}),\quad\forall y\in\overline{B(0,\frac{3}{4})\setminus B(x,\lambda)}.\label{jgd}
\end{equation}
For any $y\in\partial B(0,\frac{3}{4})$, we have 
\begin{equation*}
u_{x,\lambda}(y)\leq(4R)^{n-2}\sup\limits_{B(0,\frac{3}{4})}u=\inf\limits_{B(0,\frac{3}{4})}u\leq u(y).
\end{equation*}
Also, we know that 
\begin{equation*}
\lambda(A^{u_{x,\lambda}})\in \partial\Gamma,\quad\mbox{ in }B(0,\frac{3}{4})\setminus \overline{B(x,\lambda)},\quad\mbox{ in the viscosity sense}.
\end{equation*}
Since $u_{x,\lambda}=u$ on $\partial B(x,\lambda)$, by applying the comparison principle Theorem \ref{thm:CPQuad}(b) with $\Omega=B(0,\frac{3}{4})\setminus \overline{B(x,\lambda)}$, $U=\{M\in\Snn:\lambda(M)\in\Gamma\}$, $F[\psi] = A[\psi]$, $w={-\frac{2}{n-2}}\ln u_{x,\lambda}$ and $v={-\frac{2}{n-2}} \ln u$, we have  
\begin{equation}
u_{x,\lambda}\leq u\mbox{ in }B(0,\frac{3}{4})\setminus \overline{B(x,\lambda)}\mbox{ for any }0<\lambda\leq R,x\in\overline{B(0,\frac{1}{2})}.\label{lashi}
\end{equation}

By \cite[Lemma 2]{LiNg-arxiv}, \eqref{lashi} implies that $u$ is Lipschitz continuous on $\overline{B(0,\frac{1}{2})}$. This concludes the proof.
\end{proof}

As pointed out in the introduction, the above proof of Theorem \ref{thm:regularity} uses not only comparison principles but also conformal invariance property of the conformal Hessian. For general operators of the form \eqref{Eq:FDef}, one does not expect a purely local regularity like that in Theorem \ref{thm:regularity} to hold, as illustrated by the following example.

\begin{example}
Let $U$ be the set of symmetric $n \times n$ matrices $M$ with $M_{11} > 0$, and $L \equiv 0$. The equation $F[\psi] \in \partial U$ becomes
\[
\partial_{x_1}^2 \psi = 0.
\]
Then, the comparison principle holds (by considering the restriction of $\psi$ to each line parallel to the $x_1$-axis). Nevertheless, for any continuous function $f: \RR^{n-1} \rightarrow \RR$,  $\psi(x_1,x_2, \ldots, x_n) = f(x_2,\ldots, x_n)$ is a viscosity solution of $F[\psi] \in \partial U$, and clearly, the regularity of $\psi$ (with respect to the $x_2, \ldots, x_n$ variables) is not better than that of $f$. 
\end{example}

Despite the above negative example, by a variant of the proof of Theorem \ref{thm:regularity} using translational invariance rather than conformal invariance, we have the following partial generalization:

\begin{cor}\label{prop:ObLipReg}
Assume $n \geq 2$. Let $\Omega\subset\mathbb{R}^{n}$ be a bounded open set, $(F,U)$ be as in Theorem \ref{thm:CPQuad} with constant $\alpha$ and $\beta$. Assume that $\psi \in C^0(\Omega)$ is a viscosity solution to \eqref{Eq:FpsiEq} in $\Omega$. If $\psi \in C^{0,1}(\overline{N \cap \Omega})$ for some open neighborhood of $\partial \Omega$, then $\psi \in C^{0,1}(\bar\Omega)$ and
\[
\sup_{\Omega} |\nabla\psi| \leq \sup_{N \cap \Omega} |\nabla\psi|.
\]
\end{cor}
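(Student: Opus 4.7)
The plan is to follow the scheme of the proof of Theorem \ref{thm:regularity}, substituting translational invariance for conformal invariance. Since $\alpha$ and $\beta$ are constants, the operator $F[\psi] = \nabla^2\psi + \alpha\,\nabla\psi\otimes\nabla\psi - \beta|\nabla\psi|^2 I$ depends only on $\nabla\psi$ and $\nabla^2\psi$, so for every $h \in \RR^n$ and $c \in \RR$ the map $\psi(\cdot) \mapsto \psi(\cdot + h) + c$ sends viscosity solutions of \eqref{Eq:FpsiEq} in $\Omega$ to viscosity solutions on the translated domain. In the proof that follows, the Kelvin transform used in Theorem \ref{thm:regularity} will be replaced by the perturbation $\psi(\cdot + h) + L|h|$ for suitable $h$ and $L$.

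Write $L := \sup_{N\cap\Omega}|\nabla\psi|$, which is the Lipschitz constant of $\psi$ on $\overline{N\cap\Omega}$. By shrinking $N$ if necessary (which does not increase $L$), I may assume that $\{x \in \Omega : \textrm{dist}(x,\partial\Omega) < \delta_0\} \subset N \cap \Omega$ for some $\delta_0 > 0$; in particular $\psi$ extends continuously to $\bar\Omega$. For $0 < |h| < \delta_0/2$, set $\Omega_h := \Omega \cap (\Omega - h)$ and consider on $\overline{\Omega_h}$ the continuous functions
\[
v(x) := \psi(x), \qquad w(x) := \psi(x+h) + L|h|.
\]
Both are viscosity solutions of \eqref{Eq:FpsiEq} in $\Omega_h$ by the invariance noted above. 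The key step is to check that $w \geq v$ on $\partial\Omega_h$: any $x \in \partial\Omega_h$ satisfies either $x \in \partial\Omega$ or $x+h \in \partial\Omega$, and in either case both $x$ and $x+h$ lie within distance $|h| < \delta_0$ of $\partial\Omega$, hence in $\overline{N\cap\Omega}$. The Lipschitz bound there gives $|\psi(x+h) - \psi(x)| \leq L|h|$, whence $w \geq v$ on $\partial\Omega_h$. Applying Theorem \ref{thm:CPQuad}(b) yields $\psi(x+h) - \psi(x) \geq -L|h|$ on $\overline{\Omega_h}$. Repeating the argument with $-h$ in place of $h$ (or swapping the roles of the sub- and supersolution) gives the reverse inequality, so
\[
|\psi(x+h) - \psi(x)| \leq L|h| \qquad \text{whenever } x, x+h \in \bar\Omega \text{ and } |h| < \tfrac{\delta_0}{2}.
\]

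The main delicate point is the boundary verification in the previous paragraph; it hinges on $N$ containing a uniform one-sided tubular strip along $\partial\Omega$, which is why $N$ is shrunk at the outset. Once the boundary inequality is in place, the comparison principle does all the work. The displayed local estimate yields the Lipschitz continuity of $\psi$ up to $\partial\Omega$ and, by Rademacher's theorem, the a.e. gradient bound $\sup_\Omega|\nabla\psi| \leq L = \sup_{N\cap\Omega}|\nabla\psi|$, completing the proof.
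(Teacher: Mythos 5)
Your proposal is correct and is essentially the paper's own argument: since $\alpha,\beta$ are constant, one compares $\psi$ with the translate $\psi(\cdot+h)$ plus a constant on $\Omega\cap(\Omega-h)$ via the comparison principle Theorem \ref{thm:CPQuad}(b), the boundary inequality being supplied by the Lipschitz bound on the collar $\overline{N\cap\Omega}$ containing $\partial(\Omega\cap(\Omega-h))$ for $|h|$ small. The only cosmetic difference is that the paper adds the exact constant $\max_{\partial(\Omega\cap\Omega_e)}(\psi-\psi_e)$ before estimating it by $\sup_N|\nabla\psi|\,|e|$, whereas you add $L|h|$ directly and verify the boundary comparison; the two steps are interchangeable.
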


Before giving a proof, we remark that, in general, the Lipschitz regularity of $\psi$ on $\partial\Omega$ does not ensure that the solution $\psi$ is Lipschitz continuous in $\bar\Omega$.
\begin{example}
Consider the equation 
\begin{equation}
F[\psi] = \nabla^2 \psi - |\nabla \psi|^{m}\,I \in \partial U
	\label{Eq:BLipCounterEx}
\end{equation}
where $m > 2$ and $U$ is the set of symmetric $n \times n$ matrices with at least one positive eigenvalue. (This equation can be written equivalently as
\[
\det (F[\psi]) = 0 \text{ and } F[\psi] \leq 0.)
\]
Then $\psi(x) = - (m-1)^{\frac{m-2}{m-1}} (m-2)^{-1}\,(|x|-1)^{\frac{m-2}{m-1}}$ is a solution to \eqref{Eq:BLipCounterEx} on $\Omega_a = \{1 < |x| < a\}$ for any $a > 1$. Clearly $\psi$ is constant on each component of the boundary $\partial \Omega_a$, but $\psi \notin C^{0,1}(\overline{\Omega_a})$.
\end{example}

\begin{proof}[Proof of Corollary \ref{prop:ObLipReg}]
Shrinking $\Omega$ and $N$ if necessary, we may assume that $\psi \in C^{0,1}(\bar N)$.


We note that for any vector $e \in \RR^n$ and any constant $c \in \RR$, the function
\[
\psi_{e}(x) := \psi(x + e)
\]
satisfies $F[\psi_{e} + c] \in \partial U$ in $\Omega_e := \{x: x + e \in \Omega\}$ in the viscosity sense. Thus, by the comparison principle Theorem \ref{thm:CPQuad}(b),
\[
\psi \leq \psi_e + \max_{\partial (\Omega \cap \Omega_e)} (\psi - \psi_e) \text{ in } \Omega \cap \Omega_e.
\]
In particular, there is some $\delta > 0$ such that for $|e| < \delta$, we have $\partial (\Omega \cap \Omega_e) \subset \bar N$ and 
\[
\psi \leq \psi_e + \sup_N |\nabla\psi||e| \text{ in }  \Omega \cap \Omega_e.
\]
This implies the assertion.
\end{proof}

\noindent{\bf{\large Acknowledgments.}} Li is partially supported by NSF grant DMS-1501004. Wang is supported in part by the scholarship from China Scholarship Council under the Grant CSC No. 201406040131. 


\newcommand{\noopsort}[1]{}

\end{document}